\theoremstyle{plain}
\newtheorem{thm}{\protect\theoremname}[section]
\theoremstyle{definition}
\theoremstyle{plain}
\newtheorem{lem}[thm]{\protect\lemmaname}
\theoremstyle{plain}
\newtheorem{claim}[thm]{\protect\claimname}
\theoremstyle{plain}
\newtheorem{cor}[thm]{\protect\corollaryname}
\theoremstyle{plain}
\newtheorem{prop}[thm]{\protect\propositionname}
\theoremstyle{plain}
\newtheorem{ques}[thm]{\protect\questionname}
\providecommand{\propositionname}{Proposition}
\providecommand{\questionname}{Question}
\providecommand{\corollaryname}{Corollary}
\providecommand{\definitionname}{Definition}
\providecommand{\lemmaname}{Lemma}
\providecommand{\claimname}{Claim}
\providecommand{\theoremname}{Theorem}
\title{Small subsets with large sumset:\\ Beyond the Cauchy--Davenport bound} 
\author{Jacob Fox\thanks{Department of Mathematics, Stanford University, Stanford, CA 94305. Email: {\tt jacobfox@stanford.edu}. Research supported by a Packard Fellowship and by NSF Awards DMS-1800053 and DMS-2154169.} \and Sammy Luo\thanks{Department of Mathematics, Stanford University, Stanford, CA 94305. Email: {\tt sammyluo@stanford.edu}. Research supported by NSF GRFP Grant DGE-1656518.} \and Huy Tuan Pham\thanks{Department of Mathematics, Stanford University, Stanford, CA 94305. Email: {\tt huypham@stanford.edu}. Research supported by a Two Sigma Fellowship.} \and Yunkun Zhou\thanks{Department of Mathematics, Stanford University, Stanford, CA 94305. Email: {\tt yunkunzhou@stanford.edu}. Research supported by NSF GRFP Grant DGE-1656518.}}
\date{}
\begin{document}

\maketitle

\begin{abstract}
For a subset $A$ of an abelian group $G$, given its size $|A|$, its doubling $\kappa=|A+A|/|A|$, and a parameter $s$ which is small compared to $|A|$, we study the size of the largest sumset $A+A'$ that can be guaranteed for a subset $A'$ of $A$ of size at most $s$. We show that a subset $A'\subseteq A$ of size at most $s$ can be found so that $|A+A'| = \Omega(\min(\kappa^{1/3},s)|A|)$. Thus a sumset significantly larger than the Cauchy--Davenport bound can be guaranteed by a bounded size subset assuming that the doubling $\kappa$ is large. Building up on 
the same ideas, we resolve a conjecture of Bollob\'as, Leader and Tiba that for subsets $A,B$ of $\mathbb{Z}_p$ of size at most $\alpha p$ for an appropriate constant $\alpha>0$, one only needs three elements $b_1,b_2,b_3\in B$ to guarantee $|A+\{b_1,b_2,b_3\}|\ge |A|+|B|-1$. Allowing the use of larger subsets $A'$, we show that for sets $A$ of bounded doubling, one only needs a subset $A'$ with $o(|A|)$ elements to guarantee that $A+A'=A+A$. We also address another conjecture and a question raised by Bollob\'as, Leader and Tiba on high-dimensional analogs and sets 
whose sumset cannot be saturated by a bounded size subset. 
\end{abstract}

\section{Introduction}

For finite subsets $A,B$ of an abelian group, their sumset is $A+B=\{a+b:a \in A,b \in B\}$. Estimating the sizes of sumsets is a classical topic extensively studied in additive combinatorics, and has motivated the development of a wide variety of influential tools and techniques. One of the most classical results in this area is the Cauchy--Davenport bound \cite{Cau, Dav1, Dav2}, which says that for nonempty subsets $A,B$ of $\mathbb{Z}_p$, $|A+B|\ge \min(p,|A|+|B|-1)$. This strengthens the simple observation that for nonempty subsets $A,B\subseteq \mathbb{Z}$, $|A+B|\ge |A|+|B|-1$. The equality cases of the Cauchy--Davenport bound were later characterized by Vosper \cite{Vos}, who showed that either $A,B$ must be arithmetic progressions of the same common difference, or $\min(|A|,|B|)=1$, or $|A|+|B| \geq p-1$. 

Recently, in a sequence of papers, Bollob\'as, Leader and Tiba \cite{BLT, BLT3} studied several intriguing strengthenings of classical sumset estimates, including the Cauchy--Davenport bound. In particular, they showed that for nonempty subsets $A,B$ of integers with $|A|\ge |B|$, it is enough to use three elements from $B$ to achieve the sumset bound, that is, there exists a subset $B_{(3)}$ of $B$ of size at most three such that $|A+B_{(3)}| \ge |A|+|B|-1$. Similarly, they showed that for subsets $A,B$ of $\mathbb{Z}_p$ with $|A|\ge|B|$, there exists a subset $B_{(C)}$ of $B$ of constant size $C$ for which $|A+B_{(C)}|\ge \min(p,|A|+|B|-1)$. (Here and throughout the paper, we use the notation $A_{(s)}$ and $B_{(s)}$ to denote subsets of $A$ and $B$ of size at most $s$.) The results were also later shown in the ``medium-sized'' regime in \cite{BLT3}: for subsets $A,B$ of $\mathbb{Z}_p$ with $|A|=|B|=n$, there exist subsets $A'$ of $A$ and $B'$ of $B$ each of size at most $C\sqrt{n}$ for which $|A'|+|B'|\ge \min(p,|A|+|B|-1)$ for some absolute constant $C$. Bollob\'as, Leader and Tiba \cite{BLT,BLT3} pose several interesting conjectures motivated from their work. 

Sumset estimates beyond the Cauchy--Davenport bound have been extensively studied, where the behavior gets significantly more interesting. For example, while sets achieving the Cauchy--Davenport bound have particularly simple structure, for sets $A,B$ with $|A|=|B|=n$ and $|A+B|\le \lambda n$ for a constant $\lambda>2$, the structure gets considerably more complex and there is no exact characterization of $A$ and $B$. Nevertheless, we know from Freiman's Theorem \cite{Frei1, Frei2} that $A$ and $B$ are dense subsets of generalized arithmetic progressions of bounded dimension. This central result has been very influential in the further development of additive combinatorics and related areas, see for example \cite{Nat, TV}. Motivated by this theme, in this paper, we study sumset bounds beyond the Cauchy--Davenport bound that can be achieved using small subsets. We prove several positive results and present constructions which give information about the fundamental limits. 

\vspace{0.2cm}
\noindent {\bf Lower bounds beyond the Cauchy--Davenport bound.} Our first main result in the paper shows that we can indeed achieve bounds much better than the Cauchy--Davenport bound using a bounded number of elements, where the size of the sumset we can guarantee using the small subset grows with the size of the sumset $A+A$. 
\begin{restatable}{thm}{sym}\label{thm:sym}
There exists $c>0$ such that the following holds. Let $G$ be an abelian group. Let $A\subseteq G$ be nonempty and let $\kappa=\kappa(A):= |A+A|/|A|$. Then for each $s \ge 1$, there exists $A_{(s)}\subseteq A$ of size at most $s$ such that
\[
|A+A_{(s)}| \ge c\min(\kappa^{1/3},s)|A|.
\]
\end{restatable}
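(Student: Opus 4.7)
The plan is to analyze the problem via the additive energy of $A$ and split into cases. Set $t=\min(s,\lfloor c_0\kappa^{1/3}\rfloor)$ for a small constant $c_0$; since any subset of $A_{(s)}$ of size $t$ also satisfies the desired bound, we may assume $s=t$. Writing $r(x)=|\{(a,b)\in A^2: a+b=x\}|$, the additive energy $E(A)=\sum_x r(x)^2$ satisfies $E(A)\ge|A|^4/|A+A|=|A|^3/\kappa\ge|A|^3/t^3$ by Cauchy--Schwarz. For $A_{(s)}=\{a_1,\ldots,a_t\}\subseteq A$, Bonferroni's inequality gives
\[
|A+A_{(s)}|\ge t|A|-\sum_{1\le i<j\le t}|A\cap(A+(a_j-a_i))|,
\]
which will be our main workhorse.

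In the low-energy regime $E(A)\le c_1|A|^3/t$ for a sufficiently small constant $c_1$, I would pick $A_{(s)}$ uniformly at random from $A$. Since $\mathbb{E}[|A\cap(A+(a_j-a_i))|]=E(A)/|A|^2+O(1/|A|)$ for uniformly random distinct $a_i,a_j\in A$, the expected loss in Bonferroni is at most $\binom{t}{2}E(A)/|A|^2+O(t)\le t|A|/2$ for suitable $c_1$. Averaging then produces a valid $A_{(s)}$ with $|A+A_{(s)}|\ge t|A|/2$.

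The harder case is $E(A)>c_1|A|^3/t$, where $r$ concentrates on popular sums and the random argument loses. Here the plan is to analyze the level sets $B_\theta=\{x\in A+A: r(x)\ge\theta\}$ and build $A_{(s)}$ greedily by selecting, at each step, the $a\in A$ that maximizes $|(A+a)\cap(B_\theta\setminus(A+A_{(s)}))|$. Since each $x\in B_\theta$ lies in $r(x)\ge\theta$ of the $|A|$ translates $A+a'$, the greedy step covers a $\theta/|A|$ fraction of the uncovered portion of $B_\theta$; for $\theta\gtrsim|A|/t$, after $t$ steps we have covered an $\Omega(1)$ fraction of $B_\theta$, giving $|A+A_{(s)}|\ge\Omega(|B_\theta|)$. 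The aim is then to exhibit $\theta\gtrsim|A|/t$ with $|B_\theta|\gtrsim t|A|$.

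The principal obstacle is in the high-energy case: establishing the existence of such a $\theta$. Using only the identities $\sum_x r(x)=|A|^2$ and $\sum_x r(x)^2=E(A)$ (for instance via dyadic pigeonhole), one can only guarantee $|B_{|A|/t}|\gtrsim|A|/t$ in general, which is a factor of $t^2$ short of the target $t|A|$. Closing this gap requires genuine use of the high-energy hypothesis, likely via the Pl\"unnecke--Ruzsa inequality for higher sumsets. A particularly delicate sub-case is when the extremely popular sums (those with $r(x)$ close to $|A|$) are few in number, which forces $A$ to be approximately translation-invariant under the differences of these sums and hence approximately periodic; this can be handled by passing to a quotient by the induced group of (approximate) periods and recursing.
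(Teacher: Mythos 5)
Your low-energy argument (random sampling plus Bonferroni) is sound, and the case split correctly identifies that random selection only works when $E(A)\le c_1|A|^3/t$. But the high-energy case is where the entire difficulty of the theorem lives, and you have correctly diagnosed that you cannot close it: the level-set $B_\theta$ would need to have near-maximal size $\Omega(t|A|)$ at the threshold $\theta\approx|A|/t$, and the energy hypothesis alone does not force this. In particular, $E(A)>c_1|A|^3/t$ is compatible with the mass of $r$ being concentrated on far fewer than $t|A|$ popular sums (e.g.\ $|B_\theta|\theta^2\approx E(A)$ with $\theta$ much larger than $|A|/t$), in which case the greedy covering of $B_\theta$ yields far fewer than $t|A|$ elements. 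The proposed repairs (Pl\"unnecke--Ruzsa for higher sumsets, quotienting by approximate periods and recursing) are not carried out and do not obviously resolve this; passing to a quotient destroys the relationship between $|A+A_{(s)}|$ and $|A+A|$ that you are trying to control, and the set $A$ need not be close to periodic in any usable sense.

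The missing idea is a single greedy-plus-path-counting argument that makes the energy split unnecessary. Run the greedy directly on $|A+A_{(s)}|$: at each step add an element $a$ maximizing the number of new sums $|(A+a)\setminus(A+A_{(s-1)})|$. If the greedy ever stops contributing $\Omega(|A|)$ new elements per step while $|A+A_{(s-1)}|$ is still below the target, then every translate $A+a$ (for every $a\in A$) lies almost entirely inside $C:=A+A_{(s-1)}$. Form the bipartite graph on $A\times A$ with an edge $(a,b)$ when $a+b\in C$; this graph has minimum degree at least $(1-2c)|A|$, so every ordered pair $(a,b)\in A\times A$ is joined by $\Omega(|A|^2)$ walks of length $3$. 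Each such walk $a\to b_1\to a_2\to b$ gives a representation $a+b=(a+b_1)-(b_1+a_2)+(a_2+b)$ as a signed sum of three elements of $C$, so $|A+A|\cdot\Omega(|A|^2)\le|C|^3$. Since $|C|<O(c\,\kappa^{1/3}|A|)$ by assumption, this gives $\kappa|A|\lesssim c^3\kappa|A|$, a contradiction for $c$ small. This Balog--Szemer\'edi--Gowers-style counting handles both of your cases uniformly and is exactly the substitute for the level-set analysis you were reaching for; without it, the proposal has a genuine gap.

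A small technical note: your claim $E(A)\ge|A|^3/\kappa\ge|A|^3/t^3$ is backwards. With $t\le c_0\kappa^{1/3}$ and $c_0<1$ one has $\kappa\ge t^3/c_0^3>t^3$, so $|A|^3/\kappa<|A|^3/t^3$. This does not affect your case split, but the inequality as stated is false.
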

It is clear that $|A+A_{(s)}|\le s|A|$ and $|A+A_{(s)}| \le \kappa |A|$. In fact, we give a construction (see Proposition \ref{prop:lower-exponent}) showing that a bound better than $c\kappa^{1/1.29}|A|$ cannot hold, so the bound in Theorem \ref{thm:sym} is tight up to possibly replacing $\kappa^{1/3}$ by $\kappa^C$ for some $C\in [1/3,1/1.29]$. 

We also prove the following asymmetric version. 
\begin{restatable}{thm}{asym}\label{thm:asym} 
There exists $c>0$ such that the following holds. Let $G$ be an abelian group. Let $A,B\subseteq G$ be nonempty such that $|A|=|B|$ and let $\kappa = \kappa(A,B):=|A+B|/|A|$. Then for each $s \ge 1$, there exist $A_{(s)}\subseteq A$ and $B_{(s)}\subseteq B$ of size at most $s$ such that
\[
\max(|A+B_{(s)}|,|B+A_{(s)}|) \ge c\min(\kappa^{1/3},s)|A|.
\]
\end{restatable}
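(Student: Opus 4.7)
The plan is to prove Theorem~\ref{thm:asym} by mirroring the argument of Theorem~\ref{thm:sym} in the bipartite setting, exploiting the freedom between picking $A_{(s)}$ and $B_{(s)}$. By monotonicity in $s$ (pass to a subset of size $\min(s, \lceil \kappa^{1/3} \rceil)$), we may reduce to the case $s \leq \kappa^{1/3}$, so the goal becomes to find $A_{(s)} \subseteq A$ and $B_{(s)} \subseteq B$ of size at most $s$ with $\max(|A + B_{(s)}|, |B + A_{(s)}|) \geq c s |A|$.

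A first attempt is random sampling. Picking $B_{(s)} \subseteq B$ uniformly at random of size $s$, and letting $r(x) := |\{b \in B : x - b \in A\}|$ be the bipartite representation function (so that $\sum_{x \in A+B} r(x) = |A||B| = |A|^2$), one computes
$$\mathbb{E}|A + B_{(s)}| = \sum_{x \in A+B}\left[1 - (1 - r(x)/|B|)^s\right].$$
Splitting based on whether $r(x) \leq |A|/s$ (low multiplicity) or $r(x) > |A|/s$ (high multiplicity), this is lower bounded in terms of $\sum_{\text{low}} r(x)$ and the count of high-multiplicity elements. When $r$ is sufficiently spread out, this random argument alone gives $\mathbb{E}|A + B_{(s)}| \gtrsim s|A|$, and we are done.

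The difficult case is when $r$ is concentrated on a small heavy set $H$, meaning that many translates $x - A$ have large intersection with $B$---reflecting strong additive structure relating $A$ and $B$. In this structured regime, I would follow the same iterative/structural argument used in the proof of Theorem~\ref{thm:sym}, but carried out with the bipartite representation function $r_{A, B}$ in place of $r_{A, A}$: at each step one identifies either an element $a \in A$ to adjoin to $A_{(s)}$ or an element $b \in B$ to adjoin to $B_{(s)}$ producing many new sums, using the freedom to choose between the $A$-side and $B$-side to handle the two possible structural obstructions corresponding to the ``max'' in the theorem statement.

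The main obstacle is preserving the exponent $\kappa^{1/3}$. A naive reduction that invokes Theorem~\ref{thm:sym} as a black box---for instance, applying it to the disjoint union $\tilde A \cup \tilde B$ embedded in $G \times \mathbb{Z}_3$ (where cross and within-set sumsets occupy disjoint slices and the combined doubling is $O(\kappa^2)$ by Plünnecke--Ruzsa)---would produce at best the exponent $\kappa^{2/3}$, and then converting a within-set sumset such as $|A + A_{(s)}|$ into a cross sumset $|B + A_{(s)}|$ via Ruzsa's triangle inequality would lose an additional factor of $\kappa$. Thus the proof cannot simply invoke Theorem~\ref{thm:sym} as a black box; it must instead execute the proof of Theorem~\ref{thm:sym} directly in the bipartite setting, replacing $r_{A,A}$ by $r_{A,B}$ throughout and using the freedom between $A_{(s)}$ and $B_{(s)}$ at the crucial iterative steps.
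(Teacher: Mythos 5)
Your final paragraph correctly identifies the right strategy — a greedy iteration carried out directly in the asymmetric setting, with the freedom to adjoin elements on either the $A$-side or the $B$-side — and your observation that a black-box reduction to Theorem~\ref{thm:sym} would lose in the exponent is also correct. (In fact the paper never proves Theorem~\ref{thm:sym} separately; it proves Theorem~\ref{thm:asym} first and derives Theorem~\ref{thm:sym} as the special case $B=A$, so "follow the proof of Theorem~\ref{thm:sym}" is not an available move here.) However, there are two problems with the proposal as written. First, the random-sampling preamble and the resulting dichotomy between a "spread-out" and a "concentrated" representation function is an unnecessary detour: the paper's argument is a single greedy induction on $s$ with no such case split, and the spread-out regime is automatically handled because the greedy step simply succeeds. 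You also never pin down the boundary of the dichotomy, and as stated the random-sampling estimate does not obviously deliver $\Omega(s|A|)$ uniformly (consider an AP plus a few sporadic elements, where most of the mass of $r$ sits on the AP part).

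Second, and more fundamentally, the proposal never explains what happens when the greedy step fails, and that is precisely where the exponent $\kappa^{1/3}$ is won. The paper's closing move: suppose $|A+B_{(s-1)}|+|B+A_{(s-1)}| < 2c\kappa^{1/3}n$ and no single new $b\in B$ adds $\geq 2cn$ elements to $A+B_{(s-1)}$ (nor does any $a\in A$ for $B+A_{(s-1)}$). Then in the bipartite graph $\Gamma_C$ on $A\sqcup B$ with $C=(A+B_{(s-1)})\cup(B+A_{(s-1)})$, every vertex is adjacent to all but $2cn$ vertices on the other side, so every pair $(a,b)\in A\times B$ is joined by at least $(1-6c)n^2$ walks of length three. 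Lemma~\ref{lem:path-odd} then gives $|A+B|\le |C|^3/\bigl((1-6c)n^2\bigr) < 8c^3\kappa n/(1-6c)$, contradicting $|A+B|=\kappa n$ when $c$ is small (the paper takes $c=1/14$). Without this path-counting closure — which is the BSG-style ingredient mentioned only obliquely in your sketch — the greedy alone cannot certify the bound, and the proposal does not constitute a proof.
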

Unlike the result of \cite{BLT} achieving the Cauchy--Davenport bound, in our case, it is necessary to consider $\max(|A+B_{(s)}|,|B+A_{(s)}|)$ (see Proposition \ref{prop:need-sym}). In Proposition \ref{prop:diff-set}, we also derive an analog of Theorem~\ref{thm:sym} for difference sets, in which the power $1/3$ can be replaced by $1/2$. We further extend these results to the medium-sized regime; however, in this setting, we can only expect to improve over the Cauchy--Davenport bound in a lower order term (see Theorem \ref{thm:mid} and Proposition \ref{prop:mid}).   

To illustrate the idea, we sketch our proof of Theorem \ref{thm:sym}. We first run a greedy procedure to grow $A+A_{(s)}$, and terminate when adding any additional element does not increase $|A+A_{(s)}|$ by more than $c|A|$. When this happens, we show that for each element $a\in A\setminus A_{(s)}$, all but a $c$ fraction of sums $a+a',a'\in A$ have to lie in $A+A_{(s)}$. Using a path counting argument similar to the one employed in the proof of the Balog-Szemer\'edi-Gowers theorem \cite{BS, Gow}, we can then upper bound $|A+A|$ by $O_c(|A+A_{(s)}|^3/n^2)$, which yields the desired bound. The full proofs of Theorem \ref{thm:sym} and Theorem \ref{thm:asym} are contained in Section \ref{sec:lower-bound-sumsets}.

\vspace{0.2cm}
\noindent {\bf Application: The Cauchy--Davenport bound over $\mathbb{Z}_p$ using three elements.} The path counting argument to bound $|A+A|$ from the sum of $A$ and a set of bounded size is useful for other applications. In Section \ref{sec:CD3}, we use this idea to resolve a conjecture of Bollob\'as, Leader and Tiba (Conjecture 1 of \cite{BLT}), showing that over $\mathbb{Z}_p$, it suffices to use three elements to achieve the Cauchy--Davenport bound. 
\begin{restatable}{thm}{CD}\label{thm:p-3elm}
There exists $\alpha>0$ such that the following holds. Let $A,B$ be nonempty subsets of $\mathbb{Z}_p$ of size $n\le \alpha p$. Then there exists a subset $B_{(3)}$ of $B$ of size at most three such that $|A+B_{(3)}| \ge 2n-1$. 
\end{restatable}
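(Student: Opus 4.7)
The plan is to argue by contradiction: suppose every triple $\{b_1,b_2,b_3\}\subseteq B$ satisfies $|A+\{b_1,b_2,b_3\}|\le 2n-2$. Choose the pair $(b_1,b_2)\in B^2$ maximizing $m:=|A+\{b_1,b_2\}|$; setting $b_3=b_1$ in the hypothesis already gives $m\le 2n-2$. For every $b_3\in B$, the hypothesis forces $|(A+b_3)\setminus U|\le 2n-2-m$, where $U:=A+\{b_1,b_2\}$. Summing this inequality over $b_3\in B$ and using the double-counting identity $\sum_{b_3\in B}|(A+b_3)\setminus U|=\sum_{x\notin U}r_{A+B}(x)$, with $r_{A+B}(x):=|(x-A)\cap B|$, yields
\[
\sum_{x\notin U}r_{A+B}(x)\le n(2n-2-m).
\]
Since $r_{A+B}(x)\ge 1$ whenever $x\in A+B$, and the Cauchy--Davenport bound $|A+B|\ge 2n-1$ applies (provided $\alpha<1/2$), we obtain $|A+B|\le m+n(2n-2-m)=(n-1)(2n-m)$, which forces $m\le 2n-3$; equivalently, $\phi(c):=|A\cap(A+c)|\ge 3$ for every nonzero $c\in B-B$.

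The heart of the argument is to derive a contradiction from this structural constraint by a path-counting argument in the spirit of the proof of Theorem~\ref{thm:sym}. Under $m\le 2n-3$, the incidence estimate tells us that for every $b\in B$ almost all sums $a+b$ ($a\in A$) already lie in the ``small'' set $U$, so the greedy attempt to extend $A+\{b_1,b_2\}$ by a third shift has stalled. Following the BSG-style computation of Section~\ref{sec:lower-bound-sumsets}, I would count quadruples $(a,b,a',b')\in A\times B\times A\times B$ with $a+b\in U$ and $a'+b'\in U$: each such quadruple gives a representation of $a+b'$ (and $a'+b$) as an element of $U+(A-A)$, so applying Cauchy--Schwarz to the lower bound $\sum_{x\in U}r_{A+B}(x)\ge n(m-n+2)$ converts into an upper bound on $|A+B|$, or equivalently on the additive energy $E(A,B)=\sum_x r_{A+B}(x)^2$, in terms of $m$. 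After simplification this forces $m\ge 2n-2$, contradicting $m\le 2n-3$. In practice, the calculation likely splits the marginal case $|A+B|=2n-1$ from the generic case $|A+B|\ge 2n$: in the first, Vosper's theorem pins $A$ and $B$ as arithmetic progressions sharing a common difference, and three elements of $B$ can be exhibited by hand, while the second admits more slack for the path count.

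The main obstacle is the sharpness of the path-counting step. Because the upper bound we must beat is only a single unit below the Cauchy--Davenport value, the counting has to be done with optimal constants and essentially no waste; in particular the cubic-in-$m$ form $|A+B|\lesssim m^3/n^2$ that is sufficient for Theorem~\ref{thm:sym} gives only $|A+B|\lesssim n$, which by itself is not enough, and a refined argument tailored to the regime $m=2n-O(1)$ is required to close the single-unit gap. The threshold $\alpha$ is set small enough that Cauchy--Davenport provides the integer bound $2n-1$ (so $\alpha<1/2$) and that the auxiliary sumsets $A-A$, $B-B$, and $U+(A-A)$ arising in the path count remain within $\mathbb{Z}_p$ without wrap-around, so that the combinatorial counts used above behave as they would in $\mathbb{Z}$.
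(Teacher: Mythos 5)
Your proposal has a genuine gap that you yourself flag: the BSG-style path count, even done carefully, only yields an estimate of the form $|A+B|\lesssim |C|^3/n^2$ with $|C|=O(n)$, i.e.\ $|A+B|=O(n)$. Since $|A+B|$ can in fact be as small as $2n-1$ (and indeed the whole point is to win when $|A+B|$ is bounded), this estimate is perfectly compatible with your contradiction hypothesis and produces no contradiction. You acknowledge this and gesture at ``a refined argument tailored to the regime $m=2n-O(1)$,'' but no such argument is given; that refinement is precisely the missing content. The paper's proof does \emph{not} close the gap by making the path count sharper. Instead, after establishing via path counting that a positive-density subset $A'\subseteq A$ has $|A'+B|=O(n)$, it applies Pl\"unnecke--Ruzsa and Freiman's rectification principle to obtain a Freiman $(s+1)$-isomorphism from $A'\cup B$ onto a subset of $\mathbb{Z}$, extracts a special element $x\in B$ with no nontrivial relation $b_2-b_1=tx$ for small $|t|$, and then runs a careful averaging argument (adapted from the integer case of Bollob\'as--Leader--Tiba) over the partition of $A$ into $x$-progressions, split into two cases according to the number of progressions. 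That combinatorial averaging, not the energy/path count, is what closes the ``single-unit gap.''

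A second issue: your final paragraph asserts that choosing $\alpha$ small makes the sets $A-A$, $B-B$, and $U+(A-A)$ live in $\mathbb{Z}_p$ ``without wrap-around.'' This does not follow from $n\le\alpha p$: a random $A$ of size $\alpha p$ has $A-A$ of size $\Omega(p)$, so there is no rectification for free. The reason the paper can rectify is that it first \emph{proves} small doubling for a large subset $A'$ of $A$ (using the contradiction hypothesis and path counting), and only then invokes Freiman's rectification theorem (Theorem~\ref{thm:rect} / Corollary~\ref{cor:rect}) to move $A'\cup B$ into $\mathbb{Z}$. Your plan skips this step, so the appeal to integer-like behavior is unjustified. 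The initial reduction you do perform (choosing the pair maximizing $m$, summing over $b_3$, deducing $|A\cap(A+c)|\ge 3$ for all $c\in B-B$) is correct but considerably weaker than the paper's Step~1, which produces a pair $B_{(2)}$ with $|A+B_{(2)}|\ge(3/2+c_1)n$ — a genuine lower bound that powers the subsequent path count. Finally, the appeal to Vosper's theorem in the marginal case $|A+B|=2n-1$ is not carried out; the paper instead handles the near-interval regime separately via Lemma~\ref{lem:close-interval} and Lemma~\ref{lem:interval-diff} before the main argument begins.
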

The proof involves applying the path counting argument as in Section \ref{sec:lower-bound-sumsets} in several iterations to bound the size of $A'+B'$ for appropriate subsets $A'$ of $A$ and $B'$ of $B$. When dealing with sets that have bounded sumset, one can apply appropriate rectification results to connect the problem over $\mathbb{Z}_p$ with a problem over $\mathbb{Z}$. However, the main difficulty is that sumsets are not stable under addition or removal of a small number of elements, and %
even controlling sums of subsets of $A$ and $B$ that contain all but at most an $\epsilon$ fraction of elements does not yield similar control on the sum of $A$ and $B$. (We will later revisit this phenomenon when discussing Theorem \ref{thm:BLT-almost-all} and Theorem \ref{thm:saturating-sumset}.) We then need a careful argument that works directly with appropriate rectifications of subsets of $A$ and $B$ and adapts the argument inspired by the argument of \cite{BLT} over $\mathbb{Z}$ appropriately. 

\vspace{0.2cm}
\noindent {\bf High dimensional versions.} Another setting where one can expect to go beyond the Cauchy--Davenport bound is the high dimension setting, for example where we have a set $A$ in $\mathbb{Z}^d$ which is not close to a lower dimensional set in an appropriate sense. In particular, using the Freiman--Bilu Theorem \cite{Bilu, GT}, we can show that for every $d$ and $\epsilon$, there exists $t>0$ such that if $A\subseteq \mathbb{Z}^d$ and $A$ cannot be covered by a union of $t$ hyperplanes, then $|A+A| \ge (2^d-\epsilon)|A|$. In Section \ref{sec:high-dim}, we consider the analog of the results of \cite{BLT} over the high dimension setting, and show that the high-dimension analog of the Cauchy--Davenport bound can be achieved using only a bounded number of elements. 
\vspace*{-0.5cm}
\begin{restatable}{thm}{highdim}\label{thm:high-dim}
Let $\epsilon>0$ and let $d$ be a positive integer. Assume $t$ is sufficiently large in $d$ and $\epsilon$. Let $A$ be a subset of $\mathbb{Z}^d$ such that $A$ is not contained in the union of any $t$ parallel hyperplanes. Then there is a subset $A'$ of $A$ of size $O_{\epsilon,d,t}(1)$ for which $|A+A'| \ge (2^d-\epsilon)|A|$. 
\end{restatable}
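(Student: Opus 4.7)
The plan is to proceed in three stages: (i) reduce to the case of bounded doubling via Theorem~\ref{thm:sym}; (ii) apply Freiman's theorem to model $A$ as a subset of a box in $\mathbb{Z}^d$ through a Freiman $2$-isomorphism; and (iii) form $A'$ from ``corner'' elements of $A$ chosen in a fine net of directions. The main obstacle lies in stage (iii), where showing that these corners suffice will rely on a quantitative Brunn--Minkowski-type stability result.

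For stage (i), set $s_0 = \lceil (2^d/c)^3 \rceil$, where $c$ is the absolute constant from Theorem~\ref{thm:sym}. If $\kappa := |A+A|/|A| \ge (2^d/c)^3$, Theorem~\ref{thm:sym} with $s = s_0 = O_d(1)$ yields $A_{(s_0)}$ of size at most $s_0$ with $|A+A_{(s_0)}| \ge c\kappa^{1/3}|A| \ge 2^d|A|$, and we are done. Otherwise $\kappa$ is bounded, and choosing $t$ large enough in $d,\epsilon$ the Freiman--Bilu consequence stated in the introduction gives $\kappa \ge 2^d - \epsilon/2$, so $\kappa$ is bounded and close to $2^d$. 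For stage (ii), Freiman's theorem places $A$ inside a proper generalized arithmetic progression $P \subseteq \mathbb{Z}^d$ of some dimension $d^*$ and size $|P| \le C_1|A|$, with $d^*, C_1 = O_{d,\epsilon,t}(1)$. Since $A$ is not contained in a single hyperplane (as $t \ge 2$), $P$ cannot lie in a proper affine subspace of $\mathbb{R}^d$, forcing $d^* \ge d$; refining to a $2$-proper sub-GAP of dimension $d$ still containing $A$ gives a Freiman $2$-isomorphism $\phi: A \to \tilde A \subseteq B := \prod_{i=1}^d [0,N_i) \subseteq \mathbb{Z}^d$ with $|B| \le C_1 |\tilde A|$ and $|\tilde A + \tilde A| = |A+A|$.

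For stage (iii), fix $\delta = \delta(\epsilon, d, t) > 0$ small and let $\mathcal{N}$ be a $\delta$-net of the unit sphere $S^{d-1} \subset \mathbb{R}^d$ of size $O_{\epsilon,d,t}(1)$. For each $v \in \mathcal{N}$, let $\tilde a_v \in \tilde A$ maximize the normalized inner product $v \cdot (\tilde a_1/N_1, \ldots, \tilde a_d/N_d)$; set $\tilde A' = \{\tilde a_v : v \in \mathcal{N}\}$ and $A' := \phi^{-1}(\tilde A')$, so $|A'| = O_{\epsilon,d,t}(1)$. The main obstacle is then showing $|\tilde A + \tilde A'| \ge (2^d - \epsilon)|\tilde A|$. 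Since $|\tilde A + \tilde A|/|\tilde A|$ is within $\epsilon/2$ of the maximum $2^d$ achievable in the box $B$, a quantitative discrete Brunn--Minkowski stability argument forces $\tilde A$ to be close to a dense subset of a convex body inside $B$. Combined with the standard fact that a convex body in $\mathbb{R}^d$ can be $\epsilon$-approximately covered by $O_{\epsilon,d}(1)$ translates of itself centered at its boundary, this should verify that for all but an $\epsilon$-fraction of points $x \in \tilde A + \tilde A$, the $v \in \mathcal{N}$ closest to a direction in which $x$ is near-extremal in $\tilde A + \tilde A$ yields a witness $\tilde a_v \in \tilde A'$ with $x - \tilde a_v \in \tilde A$. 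Making the discrete Brunn--Minkowski stability quantitative in the high-dimensional setting, and transferring the convex-body covering estimate back to the discrete set $\tilde A$, is the most delicate part of the argument.
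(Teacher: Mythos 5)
Your Stage (i) reduction is sound, but Stages (ii) and (iii) both have serious gaps, and the approach diverges substantially from the paper's.

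The core problem in Stage (ii) is that you conflate two different notions of dimension. You argue that since $A$ affinely spans $\mathbb{R}^d$, the GAP $P\supseteq A$ produced by Freiman's theorem must have dimension $d^*\ge d$, and then claim one can ``refine to a $2$-proper sub-GAP of dimension $d$ still containing $A$''. But the Freiman dimension $d^*$ of $A$ (the dimension of the smallest GAP of size $O_\lambda(|A|)$ containing $A$) is generally \emph{strictly larger} than the affine spanning dimension $d$, and there is no operation that reduces a GAP's dimension to $d$ while preserving inclusion of $A$ and the size bound $|P|\le C_1|A|$. For instance, a $d$-dimensional progression together with a handful of far-away isolated points still has bounded doubling and full affine span, yet no $d$-dimensional box of size $O(|A|)$ contains it after any Freiman $2$-isomorphism. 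Thus the conclusion $\tilde A\subseteq B=\prod[0,N_i)$ with $|B|=O(|\tilde A|)$ simply does not follow. Stage (iii) compounds this: it invokes a quantitative discrete Brunn--Minkowski stability statement that is not proved, not cited, and (as you yourself note) is ``the most delicate part of the argument.'' A proof cannot rest on an acknowledged and unverified obstacle.

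The paper's proof sidesteps all of this. It applies the Bollob\'as--Leader--Tiba ``almost-all'' result (Theorem~\ref{thm:BLT-almost-all}) to obtain a large subset $\hat A\subseteq A$ and a bounded subset $\tilde A\subseteq \hat A$ with $|\tilde A+\hat A|\ge(1-\delta)|\hat A+\hat A|$. If $|\hat A+\hat A|\ge(2^d-\delta)|\hat A|$ we are done immediately. Otherwise Freiman--Bilu shows $\hat A$ lies in boundedly many translates of a subspace $Q$ of dimension $\le d-1$, and the hypothesis that $A$ avoids every union of $t$ parallel hyperplanes, fed through Lemma~\ref{lem:const-fill-dim}, produces a bounded subset $A_{(C)}\subseteq A$ whose translates of $\hat A$ are pairwise essentially disjoint modulo $Q$, yielding a sumset much larger than $2^d|\hat A|$. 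This route never needs to put $A$ in a $d$-dimensional box or invoke Brunn--Minkowski stability; the non-covering hypothesis is used exactly to manufacture disjoint translates rather than to control the Freiman dimension. You would do well to think about why the hypothesis must enter in this combinatorial way rather than through convex-geometric stability.
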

In \cite[Question 3]{BLT3}, it was asked if a medium-sized version holds: for a subset $A\subseteq \mathbb{Z}^d$ that is not contained in the union of any $t$ parallel hyperplanes, does there exist $A'\subseteq A$ with $|A'|\le C\sqrt{|A|}$ and $|A'+A'|\ge (2^d-\epsilon)|A|$? It turns out that the answer to this question is negative, as shown in Proposition \ref{prop:neg-blt}. Nevertheless, the bounded size version of this, Theorem \ref{thm:high-dim}, holds. 

\vspace{0.2cm}
\noindent {\bf Saturating the entire sumset from a subset.} In the following parts, we will work with subsets of general abelian groups. As discussed earlier, it is not possible to achieve a fixed positive fraction of $|A+A|$ using the sum of $A$ and a subset $A_C$ of $A$ of size $C$ depending only on the doubling constant $\kappa=|A+A|/|A|$. %
Nevertheless, it is interesting to determine how large a subset $A'$ of $A$ is needed in order to guarantee $|A+A'|\ge (1-\epsilon)|A+A|$. It turns out that if $A$ has bounded doubling, $\kappa\le \lambda = O(1)$, one can always guarantee the existence of a subset $A'$ of $A$ of size $o(|A|)$ with $A'+A=A+A$. Furthermore, the asymmetric version holds: for any $A$ and $B$ with $|A|=|B|$ and $|A+B|\le \lambda |A|$, one can find subsets $A'$ of $A$ and $B'$ of $B$ each of size $o(|A|)$ such that $A+B=(A'+B)\cup (A+B')$. In the case where $A,B$ are dense subsets of $\mathbb{F}_p^n$%
, this was earlier shown by Ellenberg \cite{Ell} with efficient bounds building on the cap-set breakthrough. In Section \ref{sec:all}, we show that a much more general phenomenon holds for subsets of groups with bounded doubling which are not necessarily dense. 

\begin{restatable}{cor}{allsumS}\label{cor:allsum-S}
Let $G$ be an abelian group, and let $S,T\subseteq G$ be nonempty subsets of equal size such that $|S+T|=O(|S|)$. Then there exist $S'\subseteq S$ and $T'\subseteq T$ with $S+T=(S'+T)\cup (S+T')$ and $|S'|+|T'| = o(|S|)$. 
\end{restatable}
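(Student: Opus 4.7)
I would prove Corollary \ref{cor:allsum-S} in two stages. First, establish an \emph{approximate} version: for every $\lambda,\epsilon > 0$ there exists $C = C(\lambda,\epsilon)$ such that whenever $|S|=|T|=n$ and $|S+T| \le \lambda n$, one can find $S'\subseteq S$ and $T'\subseteq T$ with $|S'|+|T'| \le C$ and $|(S+T)\setminus((S'+T)\cup(S+T'))| \le \epsilon n$. Second, clean up the remaining uncovered sums one by one: for each remaining $x = s+t$, add $s$ into $S'$; this contributes at most $\epsilon n$ extra elements and yields exact saturation. Combined, $|S'|+|T'| \le C(\lambda,\epsilon) + \epsilon n$, and by taking $\epsilon = \epsilon(n) \to 0$ slowly enough that $C(\lambda,\epsilon(n)) = o(n)$, we obtain $|S'|+|T'| = o(n)$ as desired.

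\textbf{The approximate version via greedy and path counting.} For the approximate step, I would grow $S', T'$ greedily: maintain the uncovered set $R = (S+T) \setminus ((S'+T)\cup(S+T'))$ and at each step add the element of $S$ (into $S'$) or of $T$ (into $T'$) whose addition covers the largest portion of $R$. The key progress lemma to establish is: while $|R| \ge \epsilon n$, some such addition covers $\Omega_{\lambda,\epsilon}(n)$ elements of $R$. This ensures the approximate stage terminates in at most $C(\lambda,\epsilon) = O_{\lambda,\epsilon}(1)$ iterations. To prove the progress lemma, suppose every $s \in S$ has $|(s+T) \cap R| < \alpha n$ and every $t \in T$ has $|(S+t) \cap R| < \alpha n$. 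Summing over $s$ then gives $\sum_{x \in R} r(x) < \alpha n^2$, where $r(x) = |\{(s,t) : s+t = x\}|$. Counting triples $(s, s', t)$ with $s+t, s'+t \in R$ and applying Cauchy--Schwarz to the bipartite representation graph, in the same spirit as the path-counting argument behind Theorem \ref{thm:sym}, then bounds $|S+T|$ in terms of $\alpha$, $\lambda$, and $|R|$, contradicting $|R| \ge \epsilon n$ for $\alpha$ sufficiently small in terms of $\lambda$ and $\epsilon$.

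\textbf{Main obstacle.} The principal difficulty is that sums $x \in R$ with very few representations $r(x)$ make the path-counting argument weak — in the extreme, uniquely-represented sums force one element of $S' \cup T'$ per uncovered sum. To handle this, I would invoke Freiman's theorem to embed $S$ and $T$ inside a proper generalized arithmetic progression $P$ of bounded dimension $d = d(\lambda)$ and size $O(n)$; inside the ambient box, the low-representation sums are concentrated near the boundary of $P+P$, a set of size $O(n^{(d-1)/d}) = o(n)$. One then covers the bulk of $S+T$ via greedy/path-counting, and the $o(n)$-sized low-representation part by direct pointwise inclusion in $S'$. A secondary challenge is the torsion setting (for example $\mathbb{F}_p^n$ with $S$ not dense), where the Freiman-type container may be a torsion group and the boundary estimate must be replaced by a structural input analogous to Ellenberg's cap-set-based approach. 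Quantifying the interplay between $\lambda$, $\epsilon$, the Freiman dimension $d$, and the ambient group structure is the delicate piece of the argument.
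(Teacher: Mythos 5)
Your two-stage plan (approximate coverage via greedy and path-counting, then pointwise cleanup) correctly identifies the central difficulty — sums $x \in S+T$ with few representations break the path-counting progress lemma — but the proposed fix does not work, and this is the critical gap. You claim that after Freiman embedding into a GAP $P$ of bounded dimension $d$, the low-representation sums are concentrated near the boundary of $P+P$, a set of size $O(n^{(d-1)/d}) = o(n)$. This is false, and the paper's own lower-bound construction over $\mathbb{Z}$ in Section~\ref{sec:all} is a direct counterexample: taking $A = A_0 \cup A_1$ with $A_0$ a Behrend-type progression-free set and $A_1$ an interval, the set $A$ lives inside an interval (so $d = 1$ and $n^{(d-1)/d}$ would be $O(1)$), has doubling at most $6$, yet has $|A_0| \geq n\,e^{-c\sqrt{\log n}} = n^{1-o(1)}$ uniquely-represented sums $2a$, $a \in A_0$. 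These are not localized near any geometric boundary of $P+P$; they are spread through a constant fraction of the sumset. So no bound of the form $O(n^{(d-1)/d})$ or anything polynomially smaller than $n$ is available, and your cleanup step would still need to add $n^{1-o(1)}$ elements. The same example also shows that the greedy progress lemma genuinely fails rather than merely requiring a different estimate: once the uncovered set $R$ consists only of these sparsely-represented sums, every single element of $S$ or $T$ covers $O(1)$ elements of $R$, so no $\Omega_{\lambda,\epsilon}(n)$-progress step exists, yet $|R|$ can still be $n^{1-o(1)}$.

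The paper's actual proof avoids this obstacle by using a qualitatively stronger tool: the graph-theoretic triangle removal lemma, following the Kr\'al'--Serra--Vena reduction. It first runs a greedy procedure with a threshold $\tau$ to extract a small set $S_*$ handling the ``popular'' directions, then builds a tripartite graph on $X,\,XS,\,XST$ whose triangles encode the remaining pairs $(s,t)$ with $st \notin S_*T$, shows there are few triangles because of the greedy step, and applies triangle removal to cover the rest. This is why the final bound is only $o(|S|)$ with no power saving over general abelian groups (and why the Behrend construction shows that no power saving is possible over $\mathbb{Z}$): the quantitative loss comes from regularity, not from geometry. Your path-counting instincts are the right starting point and do appear in the greedy step, but the removal lemma is doing essential work here that no boundary or Freiman-dimension argument can replicate.
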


Our proof also gives rise to the appropriate generalization over general groups. For subsets $A,B$ of a group which is not necessarily abelian, we write $AB=\{ab:a \in A, b \in B\}$.

\begin{restatable}{cor}{allsumSnonab}\label{cor:allsum-S-nonab}
Let $G$ be a group, and let $S,T\subseteq G$ be nonempty subsets of equal size such that $|S^2T|=O(|S|)$. Then there exist $S'\subseteq S$ and $T'\subseteq T$ with $ST=(S'T)\cup (ST')$ and $|S'|+|T'| = o(|S|)$. 
\end{restatable}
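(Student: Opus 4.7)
The plan is to adapt the proof of Corollary \ref{cor:allsum-S} (the abelian case) by replacing every invocation of an abelian sumset inequality with its non-abelian analog. The role of the bounded doubling hypothesis $|S+T| = O(|S|)$ is taken here by the stronger triple-product hypothesis $|S^2T| = O(|S|)$, which is precisely what is needed for the non-abelian Pl\"unnecke-Ruzsa-type estimates to produce useful consequences.

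The first step is to derive the appropriate ``Pl\"unnecke-type'' bounds in the non-abelian setting. From $|S^2T| \le K|S|$ together with $|S| = |T|$, applications of the non-abelian Ruzsa triangle inequality yield $|ST| = O_K(|S|)$, $|S^{-1}ST| = O_K(|S|)$, and analogous bounds for the other short words in $S$, $T$, $S^{-1}$, $T^{-1}$ used throughout the abelian proof. These play the same structural role that the Pl\"unnecke-Ruzsa consequences of $|S+T| = O(|S|)$ did in the abelian case.

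The second step is to rerun the iterative covering argument from the proof of Corollary \ref{cor:allsum-S} in this non-abelian setting. Maintaining $S' \subseteq S$, $T' \subseteq T$ and the uncovered set $U = ST \setminus ((S'T) \cup (ST'))$, one augments $S'$ or $T'$ at each stage by an element $s$ or $t$ whose coset $sT$ or $St$ intersects $U$ substantially. The non-abelian Pl\"unnecke-Ruzsa bounds from the previous step ensure quantifiable progress at every iteration, and a secondary clean-up argument absorbs the small collection of low-multiplicity residual elements, exactly mirroring the abelian proof. A potential-function argument shows that the process terminates with $|S'| + |T'| = o(|S|)$.

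The main obstacle is managing the asymmetry between left and right multiplications in a non-abelian group. The decomposition $ST = (S'T) \cup (ST')$ naturally places $S'$ on the left and $T'$ on the right, so at each step of the iteration one must choose the correct side to augment and invoke the correctly handed form of each Ruzsa-type inequality. The triple-product hypothesis $|S^2T| = O(|S|)$, rather than merely $|ST| = O(|S|)$, is what makes this feasible: several of the non-abelian Ruzsa estimates require an extra $S$ factor on the left before they yield useful consequences, and this is precisely the condition arranged by the statement of the corollary.
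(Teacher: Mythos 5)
Your proposal fundamentally mischaracterizes the structure of the abelian proof and, more importantly, omits the ingredient that actually makes the argument work: the triangle removal lemma. In the paper, Corollary~\ref{cor:allsum-S-nonab} is an immediate consequence of Theorem~\ref{thm:allsum} (taking $X=S$, so that $Z=XST=S^2T$, and $|S'|+|T'|=o(|Z|^2/|X|)=o(|S|)$). Theorem~\ref{thm:allsum} is not a greedy covering argument; it does start with a greedy selection of a set $S_*$ with the property that every $s\in S\setminus S_*$ satisfies $|(sT)\setminus(S_*T)|\le\tau$, but then it encodes the remaining uncovered products as triangles in an auxiliary tripartite graph and invokes the graph-theoretic triangle removal lemma to produce the final sets $S'$ and $T'$.

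The concrete gap in your plan is the passage from the termination of the greedy procedure to the conclusion $(S'T)\cup(ST')=ST$. After greedy termination you only know that every remaining $s$ has fewer than $\tau$ products landing in $U=ST\setminus((S'T)\cup(ST'))$, and likewise for remaining $t$. This alone does not force $U=\emptyset$, and $U$ can a priori still contain up to $\Theta(|S|\cdot\tau)$ elements spread thinly across the translates $sT$, with no obvious way to mop them up using $o(|S|)$ further elements. Your ``secondary clean-up argument'' and ``potential-function argument'' are invoked to close this gap but neither is specified, and I see no route to making them work without essentially rediscovering the removal lemma: the statement that a small number of $(s,t,st)$ triples landing in $U$ can be destroyed by removing $o(|S|)$ elements from $S$, $T$, or $U$ is precisely an arithmetic removal statement, and these are not accessible by elementary greedy reasoning. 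Your observation that the hypothesis $|S^2T|=O(|S|)$ substitutes for Pl\"unnecke--Ruzsa in the non-abelian setting is correct in spirit, but the reason is that the removal-lemma argument needs $|XST|=O(|S|)$ directly (the abelian Corollary~\ref{cor:allsum-S} can afford the weaker hypothesis $|S+T|=O(|S|)$ only because Lemma~\ref{lem:PR} then upgrades it to $|2S+T|=O(|S|)$), not because some non-abelian Ruzsa inequality requires an extra $S$ factor.
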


Both corollaries follow from a more general result in Section \ref{sec:all}, which is proved using the graph theoretic triangle removal lemma. A reduction from an arithmetic removal lemma to a graph removal lemma goes back to work of Kr\'al', Serra and Vena \cite{KSV}, and our main technical result builds on and extends their argument. As applications of the graph removal lemma, our proofs of Corollaries \ref{cor:allsum-S} and \ref{cor:allsum-S-nonab} do not achieve strong quantitative bounds over general groups. By using the bounds for the arithmetic removal lemma over $\mathbb{F}_p^n$ due to Fox and Lov\'asz \cite{FL}, we can achieve a polynomial saving, replacing $o(|S|)$ by an appropriate power $|S|^{1-c_m}$ in the case where $G$ is an abelian group with bounded exponent $m$. Nevertheless, in the case $G=\mathbb{F}_p^n$, the argument of Ellenberg \cite{Ell} gives a better (in fact sharp) exponent for Corollary \ref{cor:allsum-S}. In Section \ref{sec:all}, we also give constructions in the cases $G=\mathbb{Z}$ and $G=\mathbb{F}_p^n$ suggesting fundamental limits on quantitative bounds to the corollaries. 

\vspace{0.2cm}
\noindent {\bf A construction of a non-saturating set.} While it is not possible to achieve almost all of $A+A$ using a bounded number of elements, one of the key results in \cite{BLT} says that this is possible upon replacing $A$ with a subset $A'$ consisting of almost all elements of $A$. 
\begin{thm}[\cite{BLT}]\label{thm:BLT-almost-all}
For $\lambda,\epsilon>0$, there exists $C>0$ such that the following holds. Let $A,B$ be nonempty subsets of an abelian group $G$ with $|A|=|B|$. Then there exist $A'\subseteq A$, $B'\subseteq B$ with $|A'|\ge (1-\epsilon)|A|$ and $|B'|\ge (1-\epsilon)|B|$, such that if $B''$ consists of $C$ randomly chosen elements of $B'$, then 
\[
\mathbb{E}[|A'+B''|]\ge \min((1-\epsilon)|A'+B'|,\lambda|A|,\lambda|B|).
\]
In particular, if $|A+B|\le \lambda|A|$, then there exists a subset $B''$ of $B'$ of size at most $C$ such that $|A'+B''| \ge (1-\epsilon)|A'+B'|$. 
\end{thm}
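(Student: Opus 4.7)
The plan is to prove this theorem by combining a \emph{pruning} step (producing $A', B'$) with a \emph{random sampling} step (producing $B''$), carefully balancing the parameters so that the expected coverage matches $\min((1-\epsilon)|A'+B'|,\lambda|A|,\lambda|B|)$. Throughout, let $n=|A|=|B|$ and $r(s):=|\{(a,b)\in A\times B:a+b=s\}|$, noting $\sum_s r(s)=n^2$.

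For the pruning step, fix a threshold $\tau=\tau(\epsilon,\lambda)$ and call a sum $s\in A+B$ \emph{popular} if $r(s)\ge\tau$. By double counting,
\[
\sum_{a\in A}|\{b\in B:r(a+b)<\tau\}|=\sum_{s:r(s)<\tau}r(s)\le\tau|A+B|.
\]
By Markov's inequality applied to the right-hand side (and its symmetric $B$-version), we can remove an $\epsilon$-fraction of $A$ and of $B$ so that the resulting sets $A',B'$ satisfy $|A'|,|B'|\ge(1-\epsilon)n$ and each $a'\in A'$ (resp.\ $b'\in B'$) participates in atypically few unpopular sums. With $\tau$ chosen large enough relative to $\epsilon n$, a popular sum $s\in A+B$ that still lies in $A'+B'$ retains a representation count $r'(s):=|\{(a',b')\in A'\times B':a'+b'=s\}|\ge r(s)-|A\setminus A'|-|B\setminus B'|$, which is $\Omega(\tau)$ after scaling.

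For the sampling step, pick $B''\subseteq B'$ uniformly at random of size $C=C(\epsilon,\lambda)$. Then
\[
\mathbb{E}[|A'+B''|]=\sum_{s\in A'+B'}\Bigl(1-\bigl(1-r'(s)/|B'|\bigr)^C\Bigr)\ge\bigl(1-e^{-C\tau/(2|B'|)}\bigr)\cdot|P\cap(A'+B')|,
\]
where $P=\{s:r(s)\ge\tau\}$. Choosing $C=\Theta(\tau^{-1}|B'|\log(1/\epsilon))$ forces the leading factor to exceed $1-\epsilon$, so the coverage is essentially $|P\cap(A'+B')|$. To hit the min-formula target, one argues by cases: when $|A+B|\le\lambda n$, the total representation of unpopular sums satisfies $\sum_{s\notin P}r(s)\le\tau|A+B|\le\tau\lambda n$, so with $\tau$ small enough, popular sums comprise at least a $(1-\epsilon)$-fraction of $A'+B'$; when $|A+B|$ is much larger than $\lambda n$, the target is $\lambda n$, and $|P|\ge n-\tau|A+B|/n$ already exceeds $\lambda n$ under an appropriately tuned threshold.

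The main obstacle is that the constraints on $\tau$ conflict for small $\epsilon$ or large $\lambda$: the pruning is efficient only when $\tau$ is small (so the Markov step removes $\le\epsilon n$), yet the ``popularity survives pruning'' step needs $\tau$ large enough that $\tau-2\epsilon n$ is still a meaningful fraction of $|B'|$. Resolving this likely requires either iterating the pruning step (recomputing $r$ on the current $A_i\times B_i$, removing fresh bad elements, and showing geometric convergence of the removed fraction), or adopting a ``robust popularity'' notion where popularity is defined relative to the $\ell^2$-energy rather than pointwise multiplicity, so that perturbations by $\epsilon n$ do not destroy the classification. Once a universal $C=C(\epsilon,\lambda)$ survives this balancing, the two cases above combine to yield the claimed bound.
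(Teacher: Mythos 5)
First, a framing note: the paper does not prove this theorem — it is quoted verbatim from Bollob\'as, Leader and Tiba \cite{BLT} and used as a black box — so there is no in-paper proof to compare against. Your proposal is therefore an attempt to reconstruct the BLT argument from scratch, and as written it is a plan with an acknowledged hole rather than a proof.

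The central gap is exactly the one you flag yourself, and it is fatal to the argument as stated. For a random $C$-element sample $B''\subseteq B'$ to hit a popular $s$ with probability close to $1$ you need $r'(s)/|B'|\gtrsim 1/C$, i.e. $r'(s)=\Omega(n/C)$ with $C=C(\epsilon,\lambda)$ a constant. But the only lower bound you extract after deleting $\le\epsilon n$ elements per side is the trivial $r'(s)\ge r(s)-2\epsilon n\ge\tau-2\epsilon n$, which is vacuous unless $\tau\ge 2\epsilon n$. On the other hand, the Markov pruning and the requirement that unpopular sums form an $\epsilon$-fraction of $A'+B'$ each push $\tau$ to $O(\epsilon n)$ or smaller. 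Note also that the Markov step as written (removing $a$ that participate in many unpopular sums) is never actually used in the ``popularity survives pruning'' claim — that claim is the trivial pigeonhole bound — so it is unclear what the Markov pruning buys you; presumably it is meant to control the \emph{new} rare sums created by deletion, but that is not spelled out and the populations are not recomputed. Your suggested remedies (iterating the pruning or switching to an $\ell^2$-energy notion of popularity) are plausible directions, but neither is carried out, and the naive iteration does not obviously terminate within an $\epsilon n$ deletion budget. Separately, the endgame in the large-sumset case is incorrect: from $n^2=\sum_s r(s)\le n|P|+\tau|A+B|$ you get $|P|\ge n-\tau|A+B|/n$, which can never exceed $n$, so the claimed conclusion $|P|\ge\lambda n$ is impossible for any $\lambda>1$. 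Reaching the $\lambda n$ target in that regime must come either from the sampling picking up many rare sums or from the pruning collapsing $|A'+B'|$, neither of which your argument addresses. Both the pruning mechanism and the large-sumset case need genuinely new ideas before this can count as a proof.
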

Indeed, much of the subtlety in obtaining sumset bounds growing with $|A+A|$ is that, unlike the Cauchy--Davenport bound which is stable under passing $A$ to a subset $A'$ that contains almost all elements of $A$, sumsets can generally behave unstably under addition or removal of few elements. For example, consider $A$ which is the union of a progression $P$ of length $n$, and a small collection of $k$ arbitrary elements. Then $|A+A|$ can be as large as $kn$, whereas upon removal of $k$ elements, the size of the sumset is only $2n-1$, matching the Cauchy--Davenport bound. This is the main reason behind stopping at the Cauchy--Davenport bound in \cite{BLT}, and Theorem \ref{thm:BLT-almost-all} suggests that this is the only obstruction. %

The example of a progression together with few additional elements mentioned above suggests that it is impossible to replace $|A'+B'|$ with $|A+B|$ in Theorem \ref{thm:BLT-almost-all}. However, it is unclear if we can replace only one of two sets with a subset. Considering that we are allowed to make our selection of elements from $B$, it could be the case that we only need to replace $A$ by a subset $A'$ with better additive structure.  Bollob\'as, Leader and Tiba (Question 5, \cite{BLT}) ask this explicitly: Given constants $\epsilon>0$ and $\lambda>0$, is there a constant $c>0$ such that the following assertion holds? If $A,B$ are subsets of $\mathbb{Z}_p$ with $|A|=|B|=n$ and $|A+B|\le \lambda n$, then there are sets $A'\subseteq A$ and $B'\subseteq B$ such that $|A'|\ge (1-\epsilon)n$ and $|B'|\le c$ and $|A'+B'|\ge (1-\epsilon)|A'+B|$.

It turns out that the answer to this question is negative. In Section \ref{sec:saturating-sumset}, we give a construction showing this in a strong sense: It is not possible to achieve a fixed positive fraction of the sumset $A'+A$ using a bounded number of elements from $A'$ even when we are allowed to choose $A'$ which is a subset of $A$ of size at least $(1-\epsilon)|A|$. 
\begin{restatable}{thm}{saturatingsumset}\label{thm:saturating-sumset}
For any $\nu>0$, there exist $\lambda>1$ and sufficiently small $\epsilon>0$ such that for all $s>0$ and $p$ sufficiently large in $s$ and $\nu$, there exists a subset $A$ of $\mathbb{Z}_p$ of size $\Omega_{\nu}(p)$ such that $|A+A| \le \lambda |A|$, and for all $A'\subseteq A$ of size at least $(1-\epsilon)|A|$ and all $A_{(s)} \subseteq A$ of size at most $s$, $|A'+A_{(s)}| \le \nu|A'+A|$. Furthermore, the same result holds over $\mathbb{Z}$ and $\mathbb{F}_2^{n}$. 
\end{restatable}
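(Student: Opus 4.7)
The plan is to exhibit an explicit adversarial construction of $A$; I sketch it for $\mathbb{Z}_p$, and the variants for $\mathbb{Z}$ and $\mathbb{F}_2^n$ follow the same template. First, I fix the parameters $\lambda = C/\nu$ and $\epsilon = c\nu$ for absolute constants $C$ (large) and $c$ (small). The choice $\lambda = \Theta(1/\nu)$ is forced by the trivial lower bound $|A'+\{a\}|/|A'+A| \ge |A'|/|A+A| \ge (1-\epsilon)/\lambda$, so the real content is to construct an $A$ for which this trivial lower bound is essentially tight for \emph{every} adversarial choice of $A'$ and $A_{(s)}$.

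The construction is a carefully layered object: $A$ is built as a union $A = P \cup R$, where $P$ is a long arithmetic progression of length $n = \Omega_\nu(p)$ and $R$ is a ``handle'' set arranged so that (a) the sums $P+R$, $R+R$ fall in generic positions outside $P+P$, so that $|A+A|$ decomposes cleanly into three pieces, and (b) the handles themselves are organized in a \emph{multi-scale} coset-progression-like structure whose size grows with $s$ while the contribution of $R$ to the doubling of $A$ remains of size $O(\lambda)$. The point of the multi-scale arrangement is to ensure that no matter how the adversary distributes its $s$ elements of $A_{(s)}$ across $P$ and the various layers of $R$, the activated portion of $A'+A$ is only a $\nu$-fraction of the whole.

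The verification has three parts. (i) Direct counting gives $|A+A| = |P+P|+|P+R|+|R+R|$, and the multi-scale design of $R$ is tuned so that this equals $(\lambda + o(1))|A|$. (ii) For $A' \subseteq A$ with $|A'| \ge (1-\epsilon)|A|$, the density bound on $\epsilon$ ensures that $A'$ retains most of $P$ and most of each layer of $R$, so $|A'+A|$ is a $\Theta(1)$-fraction of $|A+A|$. (iii) For $A_{(s)} \subseteq A$ of size $\le s$, I do a case analysis on how $A_{(s)}$ distributes between $P$ and the layers of $R$: elements in $P$ contribute only translates lying in $P+P \cup P+R$ of total size $O(n)$; elements in a given layer of $R$ activate only a proportional band of the sumset coming from that layer. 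Summing these contributions, the union $|A'+A_{(s)}|$ is bounded by a quantity that is at most $\nu|A'+A|$, uniformly in $s$.

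The main obstacle — and the source of the multi-scale nature of the construction — is the regime $s \gg \lambda$. For any simple construction (a progression plus a bounded number of scattered handles), once $s$ exceeds the number of handles the adversary can pick $A_{(s)} \supseteq R$ and thereby cover essentially all of $A+A$, forcing the ratio to $1$. To defeat this, $R$ must be spread across many layers so that, however $A_{(s)}$ chooses its elements, only a $\nu$-fraction of the sumset's ``directions'' can be simultaneously activated, while the doubling of $A$ is kept at $O(1/\nu)$ independent of $s$. Balancing these two competing requirements — growing $|R|$ (and its layer-count) with $s$ without inflating $|A+A|/|A|$ — is the technical heart of the proof, and most of Section \ref{sec:saturating-sumset} is devoted to verifying this balance and checking that the resulting $A$ simultaneously witnesses all three properties.
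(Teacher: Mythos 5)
The paper's construction is not a progression-plus-handles set; it is a \emph{niveau set}. Over $\mathbb{F}_2^{p+m}$ one takes a Hamming ball $A_0=\{x:|x|_1\le m/2-\theta\sqrt m\}$ in the slice $\{0\}\times\mathbb{F}_2^m$, together with a small Hamming ball $B_0=\{x:|x|_1\le\theta\sqrt m\}$ placed in every other $\mathbb{F}_2^p$-slice, and then transports this to $\mathbb{Z}_N=\prod\mathbb{Z}_{p_i}$ via a parity encoding, and finally to $\mathbb{Z}$ and $\mathbb{Z}_p$ by lifting. The mechanism is that each single translate $A_0'+b$ with $b\in B_0$ occupies only an exponentially small fraction of the critical band $\{y:|y|_1\in[m/2-\delta\sqrt m,m/2]\}$ (a Chernoff estimate), while the union $A_0'+B_0$ over \emph{all} of $B_0$ fills essentially the whole ball $\{|y|_1\le m/2\}$ (Harper's vertex isoperimetry). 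Translates overlap almost completely in the bulk, yet each carries a tiny unique sliver, and the slivers union to a large set. That phenomenon is the heart of the argument, and it has no analogue for arithmetic progressions.

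This is precisely why your progression-plus-multiscale-handles plan cannot work. Write $A=P\sqcup R$ with $P$ a progression. For the adversary's choice $A'=P$ to be legal you need $|R|\le\epsilon|A|$, and then $A'+A=(P+P)\cup(P+R)$. Each translate $P+a$ is itself a full progression of length $|P|=n$, so its contribution is either a fresh band of size $\approx n$ or almost entirely swallowed by a previous band — there is no regime in which a translate overlaps heavily with the others yet contributes a nontrivial sliver. Consequently, if the handles in $R$ sit in $L$ widely separated scales so that $|A'+A|\approx(L+2)n$, then the doubling is $\lambda\approx L+2$, and the adversary picking one handle per scale plus one point of $P$ (possible once $s\ge L+1$) achieves $|A'+A_{(s)}|\approx(L+1)n\approx|A'+A|$. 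To keep the ratio $\le\nu$ you would need $L\gtrsim s/\nu$, forcing $\lambda$ to grow with $s$, which the theorem forbids. Conversely, if the scales are collapsed so that $P+R$ is a single band, then $|A'+A|=O(n)$ and a single translate already covers a constant fraction, failing for $\nu<1/3$. Your sketch never resolves this tension — the ``multi-scale'' structure of $R$ is never specified, and parts (i)–(iii) of the verification are assertions rather than arguments — but more importantly the tension cannot be resolved within the progression-plus-handles template at all, for the reason above. The construction must be intrinsically high-dimensional, which is exactly what the paper's niveau set supplies and then transports to $\mathbb{Z}$ and $\mathbb{Z}_p$.
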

In fact, in our proof, we can even let $s$ grow at some explicit rate together with the size of $A$. Note that the existence of such a set $A$ is nontrivial: Since we replace $A$ with a subset $A'$ which is ``regular'' in an appropriate sense in terms of additive structure, if $|A'+A|$ is large, it cannot be due to a small number of bad elements, and we may expect to be able to select many translates of $A'$ that do not have significant overlap, leading to a large sumset. However, it turns out that this intuitive picture is not correct. Our construction involves a niveau-like set $A$ first constructed over $\mathbb{F}_2^n$, then adapted to the setting of appropriate cyclic groups and then to the integers and prime cyclic groups.

\vspace{0.2cm}
\noindent \textbf{Organization of the paper.} In Section \ref{sec:lower-bound-sumsets}, we consider bounds beyond the Cauchy--Davenport bound that are achievable using a bounded number of elements, and prove Theorems \ref{thm:sym} and \ref{thm:asym}, as well as their medium-sized analogs. In Section \ref{sec:CD3}, we prove Theorem \ref{thm:p-3elm}, which verifies the conjecture of Bollob\'as, Leader and Tiba that it suffices to use three elements to achieve the Cauchy--Davenport bound over $\mathbb{Z}_p$. In Section \ref{sec:high-dim}, we consider the high-dimension setting and prove Theorem \ref{thm:high-dim}. In Section \ref{sec:all}, we study the number of elements needed to saturate the entire sumset, and prove Corollaries \ref{cor:allsum-S} and \ref{cor:allsum-S-nonab}. In Section \ref{sec:saturating-sumset}, we give the construction proving Theorem \ref{thm:saturating-sumset}, that it is not possible to achieve a fixed positive fraction of $A'+A$ using a bounded-sized subset of $A'$, even when we are allowed to choose $A'$ as a subset consisting of almost all elements of $A$. 

\vspace{0.2cm}
\noindent \textbf{Notations and conventions.} We use the following standard asymptotic notations. We denote $f=O_P(g)$, $f\ll_P g$, or $g=\Omega_P(f)$ when there is a constant $C>0$ depending on $P$ so that $f\le Cg$, and we denote $f=o_P(g)$ or $g=\omega_P(f)$ if $f/g\to 0$ for fixed $P$. We omit floors and ceilings when they are not essential.

\section{General lower bounds on sumsets from small subsets}\label{sec:lower-bound-sumsets}

In this section, we will prove our results guaranteeing sumsets with size significantly larger than the Cauchy--Davenport bound using a small number of elements, assuming that $|A+B|$ significantly exceeds the size of the sets $A$ and $B$. In particular, we will prove Theorem \ref{thm:sym} and Theorem \ref{thm:asym}, as well as their ``medium-sized'' analogs.%

Throughout this and the following sections, we use a convenient path counting argument, which we record here. A similar argument is used in Gowers' proof of the Balog-Szemer\'edi-Gowers theorem (see \cite{Gow, TV}). Given a subset $C\subseteq A+B$, let $\Gamma_C$ be the bipartite graph on vertex sets $A,B$ such that $a\in A$, $b\in B$ are adjacent if and only if $a+b\in C$.

\begin{lem}\label{lem:path-odd}
Let $k$ be an odd positive integer. Let $A,B$ be subsets of an abelian group and $C\subseteq A+B$. Suppose that for every pair $(a,b)\in A\times B$ there are at least $w$ walks of length $k$ in $\Gamma_C$ going from $a$ to $b$. Then $|A+B|\le |C|^k/w$.
\end{lem}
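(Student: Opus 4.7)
The plan is to encode each walk of length $k$ in $\Gamma_C$ as a $k$-tuple of elements of $C$, then use a telescoping identity to show that the alternating sum of the tuple equals $a+b$, and finally count tuples. Since $k = 2m+1$ is odd and $\Gamma_C$ is bipartite on $A \cup B$, a walk of length $k$ from $a \in A$ to $b \in B$ has the form
\[
a_0 = a,\ b_1,\ a_1,\ b_2,\ \ldots,\ a_m,\ b_{m+1} = b,
\]
and each consecutive edge contributes an element of $C$: set $c_{2i+1} := a_i + b_{i+1}$ for $0 \le i \le m$ and $c_{2j} := a_j + b_j$ for $1 \le j \le m$. This gives a tuple $(c_1, \ldots, c_k) \in C^k$ associated with the walk.

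The key observation is the telescoping identity
\[
c_1 - c_2 + c_3 - \cdots + c_k \;=\; (a_0+b_1)-(a_1+b_1)+(a_1+b_2)-\cdots+(a_m+b_{m+1}) \;=\; a_0 + b_{m+1} \;=\; a+b,
\]
where all intermediate $a_i$ and $b_j$ cancel in pairs (it is crucial here that $k$ is odd so the walk ends on the $B$-side and the signs align to give $+a_0 + b_{m+1}$ rather than a difference). So every walk of length $k$ from $a$ to $b$ yields a representation of $a+b$ as a signed sum of $k$ elements of $C$ with the alternating sign pattern $+,-,+,\ldots,+$.

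To finish, I will argue that distinct walks produce distinct tuples and then count. For fixed $a_0 = a$, the tuple $(c_1, \ldots, c_k)$ determines the walk, since one recovers $b_1 = c_1 - a_0$, then $a_1 = c_2 - b_1$, and so on; thus different walks starting at $a$ give different tuples in $C^k$. For each $s \in A+B$, picking one pair $(a,b)$ with $a+b = s$ gives at least $w$ walks from $a$ to $b$ by hypothesis, hence at least $w$ distinct tuples in $C^k$ whose alternating sum is $s$. Summing the bound over $s \in A+B$ and noting that each tuple in $C^k$ contributes to exactly one value of the alternating sum gives
\[
w \cdot |A+B| \;\le\; \#\bigl\{(c_1, \ldots, c_k) \in C^k\bigr\} \;=\; |C|^k,
\]
which is the claimed inequality. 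No real obstacle appears here; the only point of care is to correctly track the parity and the bipartite structure so that the telescoping gives $+a - (-)\cdots + b$ rather than a spurious sign.
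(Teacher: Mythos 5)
Your proof is correct and follows essentially the same approach as the paper's: encode each length-$k$ walk as a $k$-tuple in $C^k$, use the telescoping/alternating-sum identity to recover $a+b$ from the tuple, observe that distinct walks (from a fixed starting vertex) give distinct tuples, and count. You spell out the injectivity step a bit more explicitly than the paper does, but the argument is the same.
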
 %
\begin{proof}
For each walk $(a_0,b_1,a_2,\dots,a_{k-1},b_k)$ of length $k$ with $a_0 = a$ and $b_k = b$, we have $a+b = (a+b_1)-(b_1+a_2)+\dots +(a_{k-1}+b)$. Thus, for each $x \in A+B$, fixing $(a,b)\in A\times B$ such that $x=a+b$, we have at least $w$ different representations $x=c_1-c_2+\cdots+c_k$ where $c_1,\dots,c_k\in C$. Since there are at most $|C|^k$ choices of $c_1,\dots,c_k$, and each choice uniquely determines $x\in A+B$ and the walk, we have that $|A+B|\cdot w\le |C|^k$, which rearranges to the desired inequality.
\end{proof}

We also have the following version of Lemma \ref{lem:path-odd} for even values of $k$, which follows from essentially the same proof. %

\begin{lem}\label{lem:path-even}
Let $k$ be an even positive integer. Let $A,B$ be subsets of an abelian group and $C\subseteq A+B$. Suppose that for every pair $(b,b')\in B\times B$ there are at least $w$ walks of length $k$ in $\Gamma_C$ going from $b$ to $b'$. Then $|B-B|\le |C|^k/w$.
\end{lem}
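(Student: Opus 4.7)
The plan is to mirror the proof of Lemma \ref{lem:path-odd}, but instead of expressing an element of $A+B$ as a telescoping alternating sum along a walk of odd length, express an element of $B-B$ as a telescoping alternating sum along a walk of even length, which stays within $B$ at its endpoints.

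First, observe that $\Gamma_C$ is bipartite on $A\sqcup B$, so any walk of even length $k$ starting at $b\in B$ ends at some $b'\in B$. Given such a walk $(b_0, a_1, b_2, a_3, \ldots, a_{k-1}, b_k)$ with $b_0=b$ and $b_k=b'$, each edge contributes a sum in $C$: set $c_1=b_0+a_1$, $c_2=a_1+b_2$, $c_3=b_2+a_3$, $c_4=a_3+b_4$, and so on, ending with $c_k=a_{k-1}+b_k$. Taking the alternating sum, the $a_i$ cancel in pairs and the interior $b_j$ cancel, yielding
\[
-c_1+c_2-c_3+\cdots-c_{k-1}+c_k \;=\; b_k - b_0 \;=\; b' - b.
\]

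Now, for each $x \in B-B$, fix a pair $(b,b')\in B\times B$ with $x = b'-b$. By hypothesis, there are at least $w$ walks of length $k$ from $b$ to $b'$ in $\Gamma_C$, each yielding a representation of $x$ as an alternating sum $-c_1+c_2-\cdots+c_k$ with $(c_1,\ldots,c_k)\in C^k$; moreover different walks yield different tuples since the walk is recovered from the sequence $(c_1,\ldots,c_k)$ and the starting vertex $b$. The total number of tuples in $C^k$ is $|C|^k$, and each tuple represents at most one element of $B-B$ in this way, so summing over $x \in B-B$ gives $|B-B|\cdot w \le |C|^k$, which rearranges to the stated bound. There is no real obstacle here beyond verifying the sign pattern in the telescoping identity; the combinatorial accounting is identical to the odd case.
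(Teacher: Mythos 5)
Your proof is correct and is exactly the argument the paper has in mind: the paper does not write out a separate proof of Lemma~\ref{lem:path-even}, stating only that it ``follows from essentially the same proof'' as Lemma~\ref{lem:path-odd}, and your telescoping alternating sum along an even-length walk together with the tuple-counting step is precisely that adaptation. The sign bookkeeping and the observation that the tuple $(c_1,\ldots,c_k)$ plus the fixed starting vertex $b$ recovers the walk are both verified correctly.
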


\subsection{Large sumsets from constant size subset}
In this section, we prove Theorems \ref{thm:sym} and \ref{thm:asym}. We recall their statements for convenience. 

\sym*

\asym*
Theorem \ref{thm:sym} is a special case of Theorem \ref{thm:asym}. Now we present the proof of  Theorem \ref{thm:asym}.
\begin{proof}
Fix $c=\frac{1}{14}$. Let $n = |A| = |B|$. We proceed by induction on $s$ to show that for every $s\geq 1$ there exist choices of $A_{(s)}$ and $B_{(s)}$ such that $|A+B_{(s)}|+|B+A_{(s)}|\geq 2c\min(\kappa^{1/3},s)n$. For $s=1$, we trivially have $|A+B_{(s)}|+|B+A_{(s)}|\geq |A|+|B|=2sn\geq 2c\min(\kappa^{1/3},s)n$ since $c\leq 1$. Now suppose the claim holds for $s-1$, so there exist $A_{(s-1)}$ and $B_{(s-1)}$ such that $|A+B_{(s-1)}|+|B+A_{(s-1)}|\geq 2c\min(\kappa^{1/3},s-1)n$. If $|A+B_{(s-1)}|+|B+A_{(s-1)}|\geq 2c\kappa^{1/3}n$, then %
we are done. So, we can assume $2c(s-1)n\leq |A+B_{(s-1)}|+|B+A_{(s-1)}|< 2c\kappa^{1/3}n$.

If there is an element $b\in B\setminus B_{(s-1)}$ such that $|(A+b)\setminus (A+B_{(s-1)})|\geq 2cn$, then %
letting $A_{(s)}=A_{(s-1)}$, $B_{(s)}=B_{(s-1)}\cup \{b\}$ yields $|A+B_{(s)}|+|B+A_{(s)}|\geq 2csn$ as desired. So we can assume that $|(A+b)\setminus (A+B_{(s-1)})|< 2cn$, and thus $|(A+b)\cap (A+B_{(s-1)})|> (1-2c)n$, for all $b\in B\setminus B_{(s-1)}$. Similarly, we can assume $|(B+a)\cap (B+A_{(s-1)})|>(1-2c)n$ for all $a\in A\setminus A_{(s-1)}$. The same inequalities are clearly true when $a\in A_{(s-1)}$ or $b\in B_{(s-1)}$, so we in fact have
\begin{equation}
    \label{eqn:bigintersect}
    |(B+a)\cap (B+A_{(s-1)})|>(1-2c)n \,\, \forall a\in A, \qquad |(A+b)\cap (A+B_{(s-1)})|>(1-2c)n \,\, \forall b\in B.
\end{equation}
Let $C= (B+A_{(s-1)})\cup (A+B_{(s-1)})$, and consider the graph $\Gamma_C$ defined as above. From \eqref{eqn:bigintersect} we know that every vertex in $A$ is adjacent to all but $2cn$ vertices of $B$, and vice versa. In particular, given any pair $(x,y)\in A\times B$, there are at least $(1-2c)n$ choices of $x'$ and $(1-2c)n$ choices of $y'$ such that $(x,y'),(y,x')\in \Gamma_C$. For each such choice of $x'$, all but at most $2cn$ of the choices of $y'$ satisfy $(x',y')\in \Gamma_C$. Thus, for every choice of $(x,y)\in A\times B$, there are at least $(1-2c)n(1-4c)n\geq (1-6c)n^2$ paths of length three from $x$ to $y$ in $\Gamma_C$. Applying Lemma~\ref{lem:path-odd} with $k=3$, $w=(1-6c)n^2$ yields
\[
|A+B|(1-6c)n^2\leq |(B+A_{(s-1)})\cup (A+B_{(s-1)})|^3<8c^3 \kappa n^3.
\]
Note that $|A+B| = \kappa n$. Simplifying the inequality above gives $\frac{8c^3}{1-6c}> 1$, which contradicts with $c=\frac{1}{14}$. Thus \eqref{eqn:bigintersect} is false. This means that there exist $A_{(s)}$ and $B_{(s)}$ with $|A+B_{(s)}|+|B+A_{(s)}|\geq 2c\min(\kappa^{1/3},s)n$. By induction, this holds for all $s\geq 1$, and we conclude that $\max(|A+B_{(s)}|,|B+A_{(s)}|)\geq c\min(\kappa^{1/3},s)n$ as desired. 
\end{proof}

Theorem~\ref{thm:sym} follows immediately by taking $B=A$. We remark that in the case $B=-A$, a similar argument yields a better dependence on $\kappa$, as follows.
\begin{prop}\label{prop:diff-set}
There exists $c>0$ such that the following holds. Let $G$ be an abelian group. Given $A\subseteq G$ such that $|A|=n$ and $|A-A|=\kappa n$. Then for each $s \ge 1$, there exists $A_{(s)}\subseteq A$ of size at most $s$ such that 
\[
|A-A_{(s)}| \ge c\min(\kappa^{1/2},s)|A|.
\]
\end{prop}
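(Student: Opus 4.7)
The plan is to mimic the inductive argument used for Theorem \ref{thm:asym}, but to exploit the symmetry of the setup $B = -A$ (equivalently, that $|B - B| = |A - A|$) in order to replace the $k = 3$ application of Lemma \ref{lem:path-odd} by the stronger $k = 2$ application of Lemma \ref{lem:path-even}. It is precisely this swap that lets the exponent improve from $\kappa^{1/3}$ to $\kappa^{1/2}$.

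I would induct on $s$, taking $A_{(1)}$ to be any singleton for the base case since $|A - \{a\}| = n \ge c \min(\kappa^{1/2},1)n$ once $c \le 1$. For the inductive step, I assume an $A_{(s-1)}$ of size at most $s-1$ satisfying $|A - A_{(s-1)}| \ge c\min(\kappa^{1/2},s-1)n$ has been produced. If $|A - A_{(s-1)}| \ge c\kappa^{1/2}n$ already, there is nothing to do, so I may assume $|A - A_{(s-1)}| < c\kappa^{1/2}n$, and it suffices to exhibit some $a^* \in A$ with $|A - (A_{(s-1)} \cup \{a^*\})| \ge |A - A_{(s-1)}| + cn$.

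Suppose for contradiction no such $a^*$ exists. Since $A - (A_{(s-1)} \cup \{a^*\}) = (A - A_{(s-1)}) \cup (A - a^*)$ and the map $x \mapsto x - a^*$ is injective on $A$, this forces
\[
|\{x \in A : x - a^* \in A - A_{(s-1)}\}| > (1-c)n \quad \text{for every } a^* \in A.
\]
Setting $B = -A$ and $C = A - A_{(s-1)} \subseteq A + B$, this is exactly the statement that every vertex on the $B$-side of the bipartite graph $\Gamma_C$ from Section \ref{sec:lower-bound-sumsets} has degree greater than $(1-c)n$ in $A$. By inclusion–exclusion, any two vertices $b,b' \in B$ then share more than $(1-2c)n$ common neighbors in $A$, giving at least that many walks of length $2$ between them in $\Gamma_C$. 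Applying Lemma \ref{lem:path-even} with $k = 2$ and $w = (1-2c)n$, and using that $|B - B| = |A - A| = \kappa n$, yields
\[
\kappa n \;\le\; \frac{|C|^2}{(1-2c)n} \;<\; \frac{c^2 \kappa n}{1 - 2c},
\]
which is a contradiction provided $c^2/(1-2c) \le 1$, i.e.\ $c \le \sqrt{2} - 1$. Any sufficiently small $c$ (for instance $c = 1/4$) therefore closes the induction. The main thing to watch is only that the greedy-termination condition is phrased symmetrically enough to give the full degree bound on the $B$-side of $\Gamma_C$; once that is set up cleanly, the use of $k = 2$ paths in place of $k = 3$ is what immediately buys the square-root exponent.
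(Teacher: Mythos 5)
Your proof is correct and follows exactly the route the paper gestures at: run the same greedy induction as in Theorem~\ref{thm:asym}, but replace the length-three walk count and Lemma~\ref{lem:path-odd} with a length-two walk count and Lemma~\ref{lem:path-even}, using that $|B-B|=|A-A|$ when $B=-A$. The paper only sketches this change in one sentence, and you have filled in the details faithfully, including the useful observation that for even-length walks one only needs the degree lower bound on the $B$-side of $\Gamma_C$.
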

The change comes from using Lemma~\ref{lem:path-even} and counting paths of length two in $\Gamma_C$ instead of using Lemma~\ref{lem:path-odd} and counting paths of length three. In fact, the same argument shows that, given $|A-A|=\kappa n$, we can find $A_{(s)}$ such that $|A+A_{(s)}|\geq  c\min(\kappa^{1/2},s)|A|$ as well.

On the other hand, the following construction gives an upper bound on how large of a sumset we can guarantee.

\begin{prop}\label{prop:lower-exponent}
There exists $c>0$ for arbitrarily large $\kappa>0$ for which the following holds: For all positive integers $d$, there exist infinitely many positive integers $n$ such that for some $A\subset \mathbb{Z}^d$ with $|A|=n$ and $|A+A| = \kappa n$, for any $s\le n^{c/\log\kappa}$ %
and any $A_{(s)}\subseteq A$ of size at most $s$, we have
\[
|A+A_{(s)}| \leq \min(\kappa^{1/1.29}, s)|A|.
\]
\end{prop}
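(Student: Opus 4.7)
The plan is to take $A$ to be the Boolean cube $\{0,1\}^k$, embedded in $\mathbb{Z}^d$ via the map $(a_1,\ldots,a_k)\mapsto\bigl(\sum_{i=1}^k a_i N^{i-1},0,\ldots,0\bigr)$ for a sufficiently large integer $N$ (so the ambient sumset structure faithfully reproduces $\{0,1\}^k+\{0,1\}^k=\{0,1,2\}^k$). This yields $|A|=n=2^k$, $|A+A|=3^k$, and hence $\kappa=(3/2)^k$. Varying $k$ over positive integers produces infinitely many admissible $n$, and the construction is valid in every dimension $d\geq 1$.

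The crux of the upper bound is a direct parameter comparison. With $\log n=k\log 2$ and $\log\kappa=k\log(3/2)$, the quantity $n^{c/\log\kappa}=2^{c/\log(3/2)}$ is a constant depending only on $c$, independent of $k$. On the other hand, $\kappa^{1/1.29}=(3/2)^{k/1.29}$ grows without bound as $k\to\infty$. Hence, for each $k$ exceeding some threshold $k_0(c)$, one has $n^{c/\log\kappa}\leq\kappa^{1/1.29}$, so any $s$ with $s\leq n^{c/\log\kappa}$ automatically satisfies $s\leq\kappa^{1/1.29}$, giving $\min(\kappa^{1/1.29},s)=s$. The target inequality then reduces to the elementary estimate $|A+A_{(s)}|\leq|A_{(s)}|\cdot|A|\leq s\cdot n$, which is always valid. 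Selecting any infinite family of $k\geq k_0(c)$ completes the construction.

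The substantive content—the reason the exponent $1/1.29$ should appear at all—is a complementary lower bound witnessing that the $\kappa^{1/1.29}$ threshold is sharp. For this, one identifies at $s\asymp\kappa^{1/1.29}$ a subset $A_{(s)}\subseteq A=\{0,1\}^k$ consisting of $s$ vertices with pairwise Hamming distance close to $k$ (for instance, from a suitable binary code with good rate-distance trade-off). For such $A_{(s)}$ the translates $A+a$ ($a\in A_{(s)}$) have small pairwise intersection, and one computes $|A+A_{(s)}|\approx s\cdot|A|\approx\kappa^{1/1.29}\cdot|A|$. The main obstacle is tuning the parameters so that the exponent coming out of this calculation is exactly $1/1.29$ and not some other value: one has to balance the number of codewords $s$ one can pack into $\{0,1\}^k$ against the minimum pairwise Hamming distance needed to keep the overlap of translates negligible, and it is precisely this balance that pins down $1/1.29$ as the best achievable exponent for the hypercube construction.
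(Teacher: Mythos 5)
The hypercube construction does not establish what the proposition is for, and the argument is vacuous precisely at the point where the proposition is supposed to be substantive. For $A=\{0,1\}^k$ one has $\log n/\log\kappa = \log 2/\log(3/2)$, a constant independent of $k$, so $n^{c/\log\kappa}$ is a fixed constant no matter how $k$ grows. Restricting $s$ to lie below a constant and then invoking $|A+A_{(s)}|\le s|A|$ proves nothing: the proposition is only meaningful when $s$ is allowed to exceed $\kappa^{1/1.29}$, so that the bound $|A+A_{(s)}|\le \kappa^{1/1.29}|A|$ genuinely caps the sumset. Your own ``complementary lower bound'' paragraph actually refutes the hypercube as an example: choosing $A_{(s)}$ to be a code of size $s$ with large minimum Hamming distance in $\{0,1\}^k$ gives $|A+A_{(s)}|\approx s|A|$ for $s$ as large as a small power of $n$, which overshoots $\kappa^{1/1.29}|A|$. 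Thus the hypercube does \emph{not} witness that the exponent in Theorem~\ref{thm:sym} is bounded by $1/1.29$; it only satisfies the letter of the statement because you have forced the range of $s$ to collapse to $O(1)$. There is also a secondary mismatch with the quantifier ``infinitely many $n$ for a given $\kappa$'': for the hypercube, $\kappa=(3/2)^k$ determines $n=2^k$, so each value of $\kappa$ yields exactly one $n$.

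The paper's construction has an entirely different shape. It takes $A=([\delta m]\cup B)^d$ where $B$ is a small ``bookend'' set of size $2\sqrt{m}$, so that $A+A=[2m]^d$ (hence $\kappa\approx (2/\delta)^d$, a constant independent of $m$), while any single translate $A+a$ lies almost entirely in $[(1+\delta)m]^d$ with only $O(\sqrt{m}\,|A|^{(d-1)/d})$ exceptional elements. Consequently $s$ translates contribute at most $(1+\delta)^dm^d + O(s\sqrt{m}\,|A|^{(d-1)/d})$ to the sumset, which for $s\ll\sqrt{m}$ stays near $((1+\delta)/\delta)^d|A|$; optimizing $\delta$ gives exponent $\log((1+\delta)/\delta)/\log(2/\delta)<1/1.29$. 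Crucially, here $\kappa$ is (essentially) fixed as $m\to\infty$, so $n^{c/\log\kappa}\approx (\delta m)^{2c/\log(2/\delta)}$ is a genuine power of $n$ and $s$ can far exceed $\kappa^{1/1.29}$, making the bound nontrivial. The key feature your hypercube lacks is this ``nearly saturated translates'' structure: each $A+a$ must already cover most of a common bounded region so that adding more translates stops paying off.
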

\begin{proof}
Fix a small constant $\epsilon\in (0,1)$. For a positive integer $d$, a perfect square $m>1$, and a real number $\delta \in (2\epsilon^{-1}m^{-1/2}, 1-m^{-1/2})$, consider $A=([\delta m] \cup B)^d$, where $B = \{k\sqrt{m}: 0\leq k <\sqrt{m}\}\cup \{m - k: 0\leq k < \sqrt{m}\}$ is a set of size $2\sqrt{m}$. In particular, $n=|A| \in [(\delta m + \sqrt{m})^d,(\delta m +2\sqrt{m})^d]$%
. It is easy to check that for $m$ sufficiently large, $B+B \supseteq [m,2m]$ and $[\delta m]+B\supseteq [m]$. Thus, $$|A+A| = (2m)^d.$$ Hence, $\kappa \geq (2/(\delta+2m^{-1/2}))^{d}\geq (2/((1+\epsilon)\delta))^d$. %

On the other hand, for any elements $a,b\in A$, we have $a+b\in [(1+\delta)m]^d$ unless there is some $i\in [d]$ for which both $a,b$ have their $i$th coordinate in $B$. Therefore, for any $a\in A$, 
\[
|(A+a)\setminus [(1+\delta)m]^d| \le d|B||A|^{(d-1)/d}=2d\sqrt{m}|A|^{(d-1)/d}.
\]
Thus, for any $s$ and any subset $A_{(s)}$ of $A$ of size $s$, we have that $|(A+A_{(s)})\setminus [(1+\delta) m]^d|\le \frac{2sd\sqrt{m}}{\delta m+\sqrt{m}}\cdot |A|\leq \frac{2s}{\sqrt{m}}\left(\frac{1+\delta}{\delta}\right)^d |A|$. Hence, $$|A+A_{(s)}|\le (1+\delta)^dm^d + \frac{2s}{\sqrt{m}}\left(\frac{1+\delta}{\delta}\right)^d|A|.$$
Thus, for $s\le \epsilon \sqrt {m}/2$, we have  $|A+A_{(s)}|/|A|\le (1+\epsilon)((1+\delta)/\delta)^{d}$. Taking $\epsilon$ sufficiently small and noting $\min_{\delta\in (0,1)} \log((1+\delta)/\delta)/\log(2/\delta) < 1/1.29$, we obtain the desired conclusion for sufficiently large $m$. Recalling that $n\le (\delta m + 2\sqrt{m})^d$, $\kappa \ge (2/((1+\epsilon)\delta))^d$, we have that 
\[
n^{c/\log \kappa} \le \exp\left(\frac{cd\log(\delta m+2\sqrt{m})}{d\log (2/((1+\epsilon)\delta))}\right) \le (\delta m)^{2c/\log(2/\delta)} \le \epsilon\sqrt{m}/2,
\]
assuming that $c>0$ is appropriately chosen. Hence, $s\le \epsilon \sqrt{m}/2$ whenever $s\le n^{c/\log \kappa}$. 
\end{proof}

We remark that in Theorem \ref{thm:asym}, we cannot guarantee that there always exists $B_{(s)}\subseteq B$ such that $|A+B_{(s)}|$ is large. 
\begin{prop}\label{prop:need-sym}
Given positive integers $s$ and $k$ and $\epsilon\in (0,1)$, for $n$ sufficiently large, there exist $A,B\subseteq \mathbb{Z}$ such that $|A|=|B|=n$, $|A+B| > kn$, and for any subset $B_{(s)}$ of $B$ of size at most $s$, $|A+B_{(s)}| \le (2+\epsilon)n$. 
\end{prop}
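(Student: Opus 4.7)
The plan is to build $A$ out of a short arithmetic progression together with a few far-flung outliers, and to take $B$ to be the plain arithmetic progression $[n]$. The outliers inflate $|A+B|$, since each one contributes a full translate of $B$ disjoint from everything else, whereas any small subset $B_{(s)} \subseteq B$ only picks up $|B_{(s)}| \le s$ points from each outlier's contribution, so $|A + B_{(s)}|$ stays close to $2n$.

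Concretely, set $k' = k - 1$, pick outliers $z_1, \ldots, z_{k'}$ spaced so that the intervals $[z_i+1, z_i+n]$ are pairwise disjoint and disjoint from $[2, 2n]$ (for instance $z_i = 10in$), and let $A = [n - k'] \cup \{z_1, \ldots, z_{k'}\}$ and $B = [n]$, so that $|A| = |B| = n$. The first step is to compute $|A + B|$: by the spacing, $A + B = ([n-k'] + [n]) \sqcup \bigsqcup_{i=1}^{k'}(z_i + [n])$ is a disjoint union, giving $|A + B| = (2n - k' - 1) + k'\cdot n = (k+1)n - k$, which exceeds $kn$ as soon as $n > k$.

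The second step is the uniform upper bound on $|A + B_{(s)}|$. Writing $A + B_{(s)} = ([n-k'] + B_{(s)}) \cup \bigcup_i (z_i + B_{(s)})$, the first piece lies inside $[2, 2n - k']$ and so has at most $2n - k' - 1$ elements, while each $z_i + B_{(s)}$ has exactly $|B_{(s)}| \le s$ elements and is disjoint from the others and from the first piece by the spacing of the $z_i$. Therefore $|A + B_{(s)}| \le 2n - 1 + (k-1)(s-1)$, which is at most $(2 + \epsilon) n$ whenever $n \ge (k-1)(s-1)/\epsilon$. No step is really an obstacle; the only mildly delicate point is ensuring the outliers are sufficiently separated that the disjointness really holds, which is handled automatically by the choice $z_i = 10in$. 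For $n$ sufficiently large in $s, k, \epsilon$, this construction thus witnesses the proposition.
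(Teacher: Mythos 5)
Your construction is essentially the same as the paper's (short arithmetic progression plus $k-1$ widely spaced outliers, $B = [n]$), and the verification is correct. In fact you are slightly more careful than the paper: you trim the progression to $[n-k']$ with $k' = k-1$ so that $|A| = n$ exactly, whereas the paper's $A = [n] \cup \{n/\epsilon, \dots, kn/\epsilon\}$ as written has $n+k$ elements. Both approaches yield the same conclusion.
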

\begin{proof}
Let $A = [n] \cup \{n/\epsilon,2n/\epsilon,\dots,kn/\epsilon\}$ and $B = [n]$. We have $|A+B| > kn$. For any subset $B_{(s)}$ of $B$ of size at most $s$, we have that $|A+B_{(s)}|\le  2n + sk \le (2+\epsilon)n$. 
\end{proof}

\subsection{Large sumsets from medium-sized subsets}

We now consider sumsets of medium-sized subsets, where we take subsets of both $A$ and $B$ while keeping the product of their sizes small. Bollobás, Leader, and Tiba \cite{BLT3} proved the following result.
\begin{thm}[{\cite{BLT3}, Theorem 1}]
\label{thm:blt-medium}
For all $\alpha,\beta>0$ there exists $c > 0$ such that the following holds. Let $A$ and $B$ be non-empty subsets of $\mathbb{Z}_p$ with $\alpha|B| \le |A| \le \alpha^{-1}|B|$ and $|A|+|B| \le (1 - \beta)p$. Then, for any integers $1 \le c_1 \le |A|$ and $1 \le c_2 \le |B|$ such that $c_1c_2 \ge c \max(|A|, |B|)$, there exist subsets $A' \subset A$ and $B' \subset B$ of sizes $c_1$ and $c_2$ such that $|A' + B'| \ge |A| + |B| - 1$.
\end{thm}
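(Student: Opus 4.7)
The plan is to split based on the doubling $\kappa := |A+B|/\max(|A|,|B|)$, handling each regime with different tools. In the \emph{large-doubling} regime, where $\kappa \ge K_0$ for a threshold $K_0 = K_0(\alpha,\beta)$ to be fixed, I would apply Theorem~\ref{thm:asym} (whose proof extends to the balanced case $\alpha|B|\le |A|\le \alpha^{-1}|B|$ with $n := \max(|A|,|B|)$ replacing $|A|$) at scale $s_0 = s_0(\alpha,\beta)$ large enough that $c\min(\kappa^{1/3},s_0)\,n \ge (1+\alpha^{-1})n \ge |A|+|B|$. This produces subsets of size at most $s_0$ whose sumset already exceeds $|A|+|B|-1$. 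To match the prescribed sizes $c_1,c_2$, note that $c_1 c_2 \ge c\,n \gg s_0$ forces at least one of $c_1,c_2$ to exceed $s_0$; one pads the witnessing small subset on that side by monotonicity of sumsets, while on the other side a short sampling or greedy step produces a companion subset of the exact target size without shrinking the sumset below $|A|+|B|-1$.

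In the \emph{small-doubling} regime $\kappa \le K_0$, the hypothesis $|A|+|B| \le (1-\beta)p$ together with a Freiman-type rectification (via the Freiman-Ruzsa modeling lemma) permits an affine embedding of $A\cup B$ into an interval of length $N = O_{\alpha,\beta,K_0}(n)$ inside $\mathbb{Z}_p$. Since $2N < p$, this embedding is sumset-preserving, so we may work in $\mathbb{Z}$ with $A,B\subseteq [N]$ of density bounded below by some $\eta = \eta(\alpha,\beta,K_0) > 0$. Sort $A = \{a_1 < \cdots < a_n\}$ and $B = \{b_1 < \cdots < b_m\}$ and pick $A',B'$ by equispaced index-sampling, anchored at the extremes: $A'$ includes $a_1$ and $a_n$ and otherwise consecutive indices differ by $\lceil (n-1)/(c_1-1)\rceil$, and similarly for $B'$. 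The sumset $A'+B'$ lies in $[a_1+b_1,\,a_n+b_m]$, an interval of length at least $n+m-2$; the gaps in $A'+B'$ within this interval can be bounded by a careful count using the two spacings, and when $c_1 c_2 \ge C(\alpha,\beta)\,n$ with $C$ sufficiently large, the combined gap length is at most $N-(n+m-1)$, giving $|A'+B'| \ge n+m-1$.

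The principal difficulty lies in the small-doubling integer case: naive equispaced sampling does not automatically fill the interval, and one needs an arithmetic analysis of how the two spacings $\lceil n/c_1 \rceil$ and $\lceil m/c_2 \rceil$ interact favorably, perhaps combined with a preliminary pigeonhole step that locates an approximate arithmetic progression inside $A$ or inside $B$ to anchor the argument. Secondary technical points are verifying that Theorem~\ref{thm:asym} extends to unequal sizes with the same constants and that the rectification density $\eta$ can be controlled uniformly in terms of $\alpha,\beta,K_0$ so that the final constant $c$ depends only on $\alpha$ and $\beta$.
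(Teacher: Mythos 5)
The paper does not actually prove this statement: it is quoted verbatim as Theorem~1 of \cite{BLT3} and is used as a black box (in the proof of Theorem~\ref{thm:mid}), so there is no in-paper proof to compare your argument against. On its own merits, your proposal has several gaps that I do not think are repairable without a substantially different strategy.

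In the large-doubling regime, Theorem~\ref{thm:asym} gives you a constant-size $B_{(s_0)}\subseteq B$ with $|A+B_{(s_0)}|\ge(1+\alpha^{-1})n$, but this sumset uses \emph{all} of $A$. You still need to shrink $A$ to a subset $A'$ of size $c_1$ while keeping $|A'+B'|\ge|A|+|B|-1$, and the ``short sampling or greedy step'' you invoke does not work in the worst case: if every element of $A+B_{(s_0)}$ has only one representation $a+b$ (which is consistent with $|A+B_{(s_0)}|=s_0n$), then any $A'$ of size $c_1$ captures at most $c_1 s_0$ elements, which can be far below $2n-1$ when $c_1\ll n/s_0$. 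Monotonicity lets you pad $B_{(s_0)}$ up to $B'$ of size $c_2$, but that is the easy side; the hard side, restricting $A$, is precisely where the medium-sized statement diverges from the constant-sized one, and your outline has no actual mechanism there.

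In the small-doubling regime, the rectification step does not go through under the stated hypotheses. Freiman's rectification (Theorem~\ref{thm:rect} / Corollary~\ref{cor:rect}) requires $|A|<c(\lambda)p$ for a constant $c(\lambda)$ that depends on the doubling bound $\lambda=K_0$, and in general this $c(\lambda)$ is small. The hypothesis $|A|+|B|\le(1-\beta)p$ permits $|A|$ as large as roughly $(1-\beta)p/2$, which for small $\beta$ will exceed any rectification threshold $c(K_0)p$. So the reduction to an interval in $\mathbb{Z}$ simply fails when the sets are dense. Finally, as you already concede, equispaced sampling does not automatically fill the interval even after rectification: take $A=B=[n]$ and $c_1=c_2=\sqrt{cn}$, so both spacings equal $d\approx n/c_1$; then $A'$ and $B'$ are arithmetic progressions of the same common difference and $|A'+B'|\approx c_1+c_2\ll 2n-1$. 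To save this you would need to break the symmetry of the spacings and argue carefully about when $d_1$ and $d_2$ are favorably related, and also handle the fact that $A,B$ are only dense subsets of an interval rather than full intervals; none of this is sketched. Taken together, both regimes rest on steps that either fail on simple examples or lack the applicable hypotheses, so the proof as proposed does not establish the theorem.
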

In particular, when $|A|=|B|=n$, and we take $c_1=c_2\ge \sqrt{cn}$, this result again yields the Cauchy--Davenport lower bound of $2n-1$ using only medium-sized subsets of $A$ and $B$. We are interested in going beyond this bound. However, in this case, unlike in the case of a constant-sized subset of $B$ plus all of $A$, it turns out that only a lower-order improvement over the Cauchy--Davenport bound is possible in general.

\begin{thm}\label{thm:mid}
For all $\beta$ there exists $c'>0$ such that the following holds. Let $A,B$ be nonempty subsets of $\mathbb{Z}_p$ with $|A|=|B|=n\leq \frac{1-\beta}{2}p$ and $|A+B|=\kappa n$%
. Then for all $c\in (0,\sqrt{n}]$ there exist subsets $A_{(s)} \subset A$ and $B_{(s)} \subset B$ of size $s\leq (c'+c)\sqrt{n}$ such that
\[
|A_{(s)}+B_{(s)}|\geq 2n-1+c\min\left(\frac{1}{4} n,\left({ \kappa^{1/3}}/{2} - 2\right)\sqrt{n}\right).
\]
\end{thm}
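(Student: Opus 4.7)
The plan is to extend the Cauchy--Davenport-type bound from Theorem~\ref{thm:blt-medium} using a greedy procedure that augments medium-sized subsets, combined with the path counting lemma (Lemma~\ref{lem:path-odd}) in the spirit of Theorem~\ref{thm:asym}. First I would apply Theorem~\ref{thm:blt-medium} with $|A|=|B|=n$ to obtain starting subsets $A^*_0 \subseteq A$ and $B^*_0 \subseteq B$ of sizes $c'\sqrt{n}$ (for a constant $c'>0$ depending on $\beta$) with $|A^*_0+B^*_0|\ge 2n-1$. As an auxiliary boost to control the greedy process later, I would also apply Theorem~\ref{thm:asym} to extract (WLOG) a subset $B^*_1\subseteq B$ of size $O(\kappa^{1/3})$ with $|A+B^*_1|\ge c_0 \kappa^{1/3}n$ for an absolute constant $c_0>0$. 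Setting $A^*:=A^*_0$ and $B^*:=B^*_0\cup B^*_1$ (of total size $O(\sqrt{n})$, using $\kappa^{1/3}\le O(\sqrt n)$ in the regime where the $(\kappa^{1/3}/2-2)\sqrt{n}$ term of the minimum is active), the starting point is $|A^*+B^*|\ge 2n-1$ together with the auxiliary lower bound $|A+B^*|\ge c_0\kappa^{1/3}n$.

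Next I would run a greedy procedure: at each step, choose an element $a\in A\setminus A^*$ or $b\in B\setminus B^*$ whose addition maximally grows $|A^*+B^*|$, continuing while the per-step growth exceeds the threshold $\alpha:=\min(\sqrt{n}/4,\kappa^{1/3}/2-2)$. If the greedy runs successfully for $c\sqrt{n}$ steps, then $|A^*+B^*|\ge 2n-1+c\sqrt{n}\alpha=2n-1+c\min(n/4,(\kappa^{1/3}/2-2)\sqrt{n})$, which is exactly the desired bound. If instead greedy fails at some step $T<c\sqrt{n}$, the failure condition forces that every $a\in A$ has at least $|B^*|-\alpha$ neighbors in $B^*$, and every $b\in B$ has at least $|A^*|-\alpha$ neighbors in $A^*$, within the bipartite graph $\Gamma_{A^*+B^*}$. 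Counting walks of length three between any $(a,b)\in A\times B$ via intermediate vertices in $A^*\cup B^*$ gives at least $(|A^*|-\alpha)(|B^*|-\alpha)\ge c'^2 n/4$ walks (taking $\alpha \le c'\sqrt{n}/2$), and Lemma~\ref{lem:path-odd} then yields $|A^*+B^*|\ge (c'^2\kappa/4)^{1/3}n^{2/3}$. In the very-large-$\kappa$ regime $\kappa\gtrsim n^{3/2}$ (where the $n/4$ side of the min is active), this path counting bound already exceeds $2n-1+cn/4$ provided $c'$ is taken sufficiently large, completing the argument in this case.

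The main obstacle lies in the intermediate range $64<\kappa\lesssim n^{3/2}$, where path counting alone is weaker than the target. For this regime I would invoke the auxiliary bound $|A+B^*|\ge c_0\kappa^{1/3}n$ to show the greedy procedure cannot fail prematurely: by averaging over $a\in A$, the maximum per-step growth is at least $(|A+B^*|-|A^*+B^*|)/n$, and as long as $|A^*+B^*|$ stays below $2n-1+c\sqrt{n}(\kappa^{1/3}/2-2)$ this average bound exceeds $\alpha$, forcing progress at every step. The delicate part, which is the technical heart of the argument, is matching the specific coefficient $1/2$ in $\kappa^{1/3}/2$ of the statement against the smaller absolute constant $c_0$ produced by Theorem~\ref{thm:asym}. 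I expect this will require careful tuning of constants, absorbing the loss into the external constant $c$ via an iterated or two-phase greedy that alternates averaging with rounds of path counting as $|A^*+B^*|$ approaches the target threshold.
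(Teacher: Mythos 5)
Your starting point (Theorem~\ref{thm:blt-medium}) matches the paper's, and your greedy-plus-path-counting framework is in the right spirit, but there is a genuine gap that your proposal acknowledges and does not close. You run the greedy on the medium-sized sumset $|A^*+B^*|$ directly, with per-step growth at most $\max(|A^*|,|B^*|)=O(\sqrt{n})$. When the greedy stalls, path counting in $\Gamma_{A^*+B^*}$ gives only $w=\Theta(n)$ walks (since $|A^*|,|B^*|=\Theta(\sqrt n)$), hence $|A^*+B^*|\gtrsim\kappa^{1/3}n^{2/3}$, which is useless unless $\kappa\gtrsim n^{3/2}$. Your patch for the intermediate regime invokes Theorem~\ref{thm:asym} to get $|A+B^*|\geq c_0\kappa^{1/3}n$ with $c_0\approx 1/14$, then tries to convert this into per-step greedy progress of size $\alpha\approx\kappa^{1/3}/2$. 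But when the greedy stalls one gets $|A+B^*|\leq|A^*+B^*|+n\alpha$, so $|A^*+B^*|\geq(c_0-1/2)\kappa^{1/3}n+\dots$, which is negative: the averaging argument cannot bridge the gap between $c_0$ and $1/2$, and no amount of constant-tuning fixes a sign problem. You flag this as ``the technical heart'' but offer no concrete mechanism.

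The paper's proof avoids this entirely by changing what the greedy tracks. Starting from $S_0=A'+B'$, it greedily adds whole translates $a+B$ or $b+A$ (not single-element increments of a medium sumset), with threshold $c_2 n$, building a large auxiliary set $S_\ell=S_0\cup(A^*+B)\cup(B^*+A)$. When this stalls, every vertex of $\Gamma_{S_\ell}$ has degree $\geq(1-c_2)n$, so path counting gives $w\geq(1-3c_2)n^2$ walks and thus $|S_\ell|\geq((1-3c_2)\kappa)^{1/3}n$ — with a constant close to $1$, which is where the $1/2$ in the statement comes from (via $c_2=1/4$). Crucially, $S_\ell$ is not of the form $A_{(s)}+B_{(s)}$; the decisive step you are missing is the random sampling: one draws $\tilde A,\tilde B$ of size $c\sqrt n$ uniformly and observes that each element of $A^*+B$ lands in $A^*+\tilde B$ with probability $\geq cn^{-1/2}$ (and symmetrically), so
\[
\mathbb{E}\bigl[|(A'\cup A^*\cup\tilde A)+(B'\cup B^*\cup\tilde B)|\bigr]\geq|S_0|+cn^{-1/2}|S_\ell\setminus S_0|,
\]
which after substituting the two lower bounds on $|S_\ell|$ gives exactly the claimed estimate. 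This ``greedy on full translates, then sample back down to medium size'' device is both what makes the constant $1/2$ attainable and what lets one exploit the strong $\Theta(n^2)$-walk path count; your approach stays at medium size throughout and therefore cannot access either benefit.
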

\begin{proof}
Let $c_2=\frac{1}{4}$.  By Theorem~\ref{thm:blt-medium}, for some $c_1>0$, there exist subsets $A'\subset A$ and $B'\subset B$ with $|A'|=|B'|=c_1\sqrt{n}$ such that $|A'+B'|\geq 2n-1$. Fixing such a pair of subsets and letting $S_0:=A'+B'$, we iteratively perform the following procedure. Given $S_{i-1}$ for $1\leq i\leq \sqrt{n}$, we form $S_i\subseteq A+B$ by either taking an element $a\in A$ such that $|(a+B)\setminus S_{i-1}|\geq c_2 n$ and letting $S_i=S_{i-1}\cup (a+B)$, or taking an element $b\in B$ such that $|(b+A)\setminus S_{i-1}|\geq c_2 n$ and letting $S_i=S_{i-1}\cup (b+A)$. The procedure terminates when neither action is possible, or once $S_{i}$ is formed for some $i\geq \sqrt{n}$. Suppose the process terminates upon the formation of $S_\ell$. %
Let $A^*$ and $B^*$ be the sets of elements chosen from $A$ and $B$, respectively, in the course of the procedure, so $|A^*|,|B^*|\leq \ell< \sqrt{n}+1$ and $S_\ell = S_0\cup (A^*+B) \cup (B^*+A)$.

If $\ell\geq \sqrt{n}$, then we have $|S_\ell|\geq |S_0|+c_2\ell n\geq |S_0|+c_2 n ^{3/2}$. Otherwise, assume $\ell<\sqrt{n}$. Then by the termination condition, for all $a\in A$ and $b\in B$ we have $|(b+A)\setminus S_{\ell}|,|(a+B)\setminus S_{\ell}|< c_2 n$. Then in the graph $\Gamma_{S_\ell}$, every vertex has degree at least $(1-c_2)n$, which as before means that between every pair $(x,y)\in A\times B$ there are at least $(1-3c_2)n^2$ paths of length three. Applying Lemma~\ref{lem:path-odd} with $k=3$ and $w=(1-3c_2)n^2$ then yields $|A+B|\leq \frac{|S_\ell|^3}{(1-3c_2)n^2}$, which means that $|S_\ell|\geq ((1-3c_2)\kappa)^{1/3}n$ in this case.

Choose subsets $\tilde{A}\subset A$ and $\tilde{B}\subset B$ of size $c \sqrt{n}$ uniformly at random. By construction, each element of $A^*+B$ appears in $A^*+\tilde{B}$ with probability at least $\frac{|\tilde B|}{B}=c n^{-1/2}$, and likewise each element of $A+B^*$ appears in $\tilde A+B^*$ with probability at least $c n^{-1/2}$. Therefore, we have
\begin{align*}
    &\mathbb{E}_{\tilde A, \tilde B}[|(A'\cup A^*\cup \tilde A)+(B'\cup B^*\cup \tilde B)|] \\
    &\geq |A'+B'|+\mathbb{E}[|((A^*\cup \tilde A)+(B^*\cup \tilde B))\setminus (A'+B')|]\\
    &\geq |S_0| + \sum_{s\in ((A^*+B)\cup (A+B^*))\setminus S_0} \Pr[s\in (A^*\cup \tilde A)+(B^*\cup \tilde B)] \\
    &\geq |S_0| + c n^{-1/2} |S_\ell \setminus S_0|\\
    &\geq (1-c n^{-1/2})|S_0| + c n^{-1/2} \min(|S_0|+c_2 n^{3/2}, (1-3c_2)^{1/3} \kappa^{1/3}n)\\
    &\geq 2n-1+c \min(c_2 n,((1-3c_2)^{1/3} \kappa^{1/3}-2)n^{1/2}).
\end{align*}
Fix choices of $\tilde A,\tilde B$ such that the size of this sumset is at least its expectation, and let $A_{(s)}=A'\cup A^*\cup \tilde A$ and $B_{(s)}=B'\cup B^*\cup \tilde B$. Note that $A_{(s)},B_{(s)}$ each has size at most $s=(c'+c)\sqrt{n}$, where $c'=c_1+1$. We then have $|A_{(s)}+B_{(s)}|\geq 2n-1+c\min(\frac{1}{4} n,(c'' \kappa^{1/3}-2)\sqrt{n})$, where $c''=(1-3c_2)^{1/3}\geq \frac{1}{2}$, as desired.
\end{proof}

The matching upper bound showing that Theorem~\ref{thm:mid} is best possible up to the power of $\kappa$ comes from taking $s=C\sqrt{n}$ in the following construction.

\begin{prop}\label{prop:mid}
For all positive integers $n,k$ with $k\leq \sqrt{n}$, there exist a real number $\kappa\in (k,k+1]$ and a set $A\subseteq \mathbb{Z}$ such that $|A|=n$, $|A+A|=\kappa n$, and for all subsets $A_{(s)},B_{(s)}\subset A$ of size $s\leq n$, we have
\[
|A_{(s)}+B_{(s)}|\leq 2n+2\kappa s.
\]
\end{prop}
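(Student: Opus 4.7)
The plan is to exhibit an explicit set $A$ as the disjoint union of an arithmetic progression and a small sparse ``spike'' set, and analyze the sumsets it produces through an elementary decomposition argument.

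Specifically, I would take
\[
A = [1, n-k+1] \cup \{M, 2M, \ldots, (k-1)M\}
\]
for some integer $M > 2n$, so that $|A| = (n-k+1) + (k-1) = n$. The large gap $M$ guarantees that the three pieces $[1,n-k+1]+[1,n-k+1]$, $\bigcup_{j=1}^{k-1} (jM+[1,n-k+1])$, and $\{(i+j)M : 1 \le i,j \le k-1\}$ of $A+A$ lie in pairwise disjoint intervals of $\mathbb{Z}$. Summing their sizes $2n-2k+1$, $(k-1)(n-k+1)$, and $2k-3$ respectively gives $|A+A| = (k+1)n - (k-1)^2 - 2$, so that $\kappa := |A+A|/n = (k+1) - \frac{(k-1)^2 + 2}{n}$. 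The hypothesis $k \le \sqrt n$ readily yields $(k-1)^2 + 2 < n$ (for $n$ at least a small absolute constant), placing $\kappa$ in $(k, k+1]$ as required.

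For the upper bound on $|A_{(s)}+B_{(s)}|$, I would fix $A_{(s)}, B_{(s)} \subseteq A$ and decompose them as $A_{(s)} = U_0 \sqcup (I \cdot M)$ and $B_{(s)} = V_0 \sqcup (J \cdot M)$, with $U_0, V_0 \subseteq [1, n-k+1]$, $I, J \subseteq [1, k-1]$, writing $s_0 := |U_0|$, $t_0 := |V_0|$, $\alpha := |I| \le k-1$, $\beta := |J| \le k-1$, and noting $s_0, t_0 \le s$. The same disjoint-range argument shows $A_{(s)}+B_{(s)}$ splits into three pieces in disjoint intervals: $U_0+V_0 \subseteq [2, 2n-2k+2]$; the union $(U_0 + J\cdot M) \cup (I \cdot M + V_0)$, contained in $\bigcup_{j=1}^{k-1} (jM + [1,n-k+1])$; and $I \cdot M + J \cdot M \subseteq \{2M, \ldots, (2k-2)M\}$. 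Their sizes are at most $2n-2k+1$, $s_0\beta + \alpha t_0$ (each of the $k-1$ separated bands contributes at most $s_0$ from $U_0 + J\cdot M$ plus $t_0$ from $I\cdot M + V_0$), and $2k-3$ respectively.

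Adding these bounds and using $s_0, t_0 \le s$ together with $\alpha, \beta \le k-1$, I would conclude
\[
|A_{(s)}+B_{(s)}| \le (2n-2k+1) + s_0\beta + \alpha t_0 + (2k-3) \le 2n - 2 + 2(k-1)s \le 2n + 2\kappa s,
\]
where the final inequality uses $\kappa > k - 1$. I do not foresee any real obstacle; the whole argument is a direct computation. The only mildly subtle point is that $U_0 + J\cdot M$ and $I\cdot M + V_0$ can overlap on bands indexed by $I \cap J$, but since overlap only shrinks the union, the crude bound $s_0\beta + \alpha t_0$ obtained by naively adding the two contributions is still valid.
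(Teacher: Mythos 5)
Your proposal is correct and is essentially the paper's argument: both constructions take $A$ to be a length-$(n-k+1)$ interval augmented by $k-1$ sparse ``spike'' elements, then split an arbitrary $A_{(s)}, B_{(s)}$ into interval and spike parts and bound the sumset piece by piece. The only cosmetic difference is that you place the spikes at multiples of a huge $M>2n$ (making the three pieces of $A+A$ land in obviously disjoint ranges), whereas the paper places them at $\{2n_0,\dots,kn_0\}$ with $n_0=n-k+1$, keeping $A+A$ more compact; both yield $\kappa\in(k,k+1]$ (modulo the same trivial edge case around $n=1$) and the identical final inequality via $\kappa>k-1$.
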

\begin{proof}
Let $n_0=n-k+1$, and let $A=[n_0]\cup \{2n_0,3n_0,\dots,kn_0\}$. Then $|A|=n$ and $|A+A|=|[(k+1)n_0]\cup \{(k+2)n_0,\dots,2kn_0\}|=(k+1)(n-k+1)+k-1$, so indeed $(k+1)n\geq |A+A|\geq (k+1)(n-k+1)>(k+1)n-k^2\geq kn$.

Fix arbitrary subsets $A_{(s)},B_{(s)}\subset A$ with $|A_{(s)}|=|B_{(s)}|=s$. Partition $A_{(s)}$ as $A_0\sqcup A_1$ where $A_0= A_{(s)}\cap [n_0]$, and partition $B_{(s)}=B_0\sqcup B_1$ similarly. We have $A_0+B_0\subseteq [2n_0]$ while $A_1+B_1\subseteq \{2n_0,3n_0,\dots,2kn_0\}$, and $|A_1|,|B_1|\leq |\{2n_0,3n_0,\dots,kn_0\}|=k-1$. Then
\begin{align*}
    |A_{(s)}+B_{(s)}| &\leq |A_0+B_0|+|A_0+B_1|+|A_1+B_0|+|A_1+B_1| \\
    &\leq 2n_0 + |A_0||B_1|+|A_1||B_0| +(2k-1)\\
    &\leq 2n + 1 + 2(k-1)s \leq 2n+2\kappa s,
\end{align*}
as desired.
\end{proof}

\section{The Cauchy--Davenport bound in $\mathbb{Z}_p$ from three elements}\label{sec:CD3}
In this section, we prove Theorem \ref{thm:p-3elm}, restated below, which resolves Conjecture 1 of \cite{BLT}.

\CD*

Our argument is based on an extension of the argument in Section \ref{sec:lower-bound-sumsets}, used to show that appropriate subsets of $A$ and $B$ have bounded doubling. We then use Freiman's rectification principle to relate appropriate subsets of $\mathbb{Z}_p$ and subsets of $\mathbb{Z}$. 

We will make use of the analog of the above theorem in the integer setting that was already shown in \cite{BLT}.
\begin{thm}[\cite{BLT}, Theorem 1]\label{thm:3-integer}
Let $A,B$ be subsets of $\mathbb{Z}$ of size $n$. Then there exists a subset $B_{(3)}$ of $B$ of size at most three such that $|A+B_{(3)}| \ge 2n-1$. 
\end{thm}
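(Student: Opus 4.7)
The plan is to reduce to a two-element subset whenever possible and, when this fails, to exploit the rigid structure that the failure imposes. After normalizing by translation so that $\min A = \min B = 0$, denote $\alpha = \max A$ and $\beta = \max B$, and set $r_A(d) := |A \cap (A - d)|$. The first step is to try the two-element subset $\{0,\beta\} \subseteq B$: this gives $|A + \{0,\beta\}| = |A \cup (A+\beta)| = 2n - r_A(\beta)$, which already meets the bound whenever $r_A(\beta) \le 1$---in particular whenever $\beta \ge \alpha$ (the two translates of $A$ are then disjoint or share exactly one point). This reduces matters to the case $\beta < \alpha$ with $k := r_A(\beta) \ge 2$.

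In this remaining case, $A$ contains $k$ disjoint pairs of the form $\{x, x+\beta\}$ with $x$ ranging over $S := A \cap (A - \beta) \subseteq [0, \alpha - \beta]$, imposing substantial one-dimensional structure on $A$. The task becomes finding a third element $b_2 \in B \setminus \{0, \beta\}$ such that $|(A + b_2) \setminus (A \cup (A + \beta))| \ge k - 1$, which would yield $|A + \{0, \beta, b_2\}| \ge 2n - 1$. An inclusion-exclusion bound shows this inequality holds whenever $r_A(b_2) + r_A(\beta - b_2) \le n - k + 1$. The existence of enough candidate missing sums is guaranteed by the classical inequality $|A + B| \ge 2n - 1$ in $\mathbb{Z}$, which forces $\bigcup_{b \in B \setminus \{0, \beta\}} (A+b) \setminus (A \cup (A+\beta))$ to contain at least $k - 1$ elements.

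The main obstacle is that naive averaging over candidate $b_2$'s, or pigeonhole applied to the missing sums distributed over the $n-2$ remaining translates, is not strong enough to concentrate $k - 1$ new sums into a single third translate when $k$ is large. To overcome this, I would exploit the rigidity forced by $r_A(\beta) = k \ge 2$ more carefully: the set $S$ is highly structured (all $k$ of its elements lie in an interval of length $\alpha - \beta < \alpha$), and its interaction with $B - B$ constrains which differences $b_2$ and $\beta - b_2$ can simultaneously have large $r_A$-value. A case analysis---distinguishing, for example, whether $B$ is ``AP-like'' or more dispersed---and selecting $b_2$ as a suitable structurally-aligned element (such as the second-largest element of $B$, or an element of $B$ close to a point of $S$) should furnish the required $b_2$. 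An alternative route is induction on $n$ or on $|A+B|$, removing a carefully chosen extremal element of $A$ or $B$ to reduce to a smaller instance whose conclusion, augmented by a single further element, yields the three-element bound for the original.
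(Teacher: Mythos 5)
First, a point of context: the paper does not prove this statement at all; it is quoted verbatim as Theorem 1 of \cite{BLT} and used as a black box in Section \ref{sec:CD3}. So there is no internal proof to compare against, and your attempt has to stand on its own.

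On its own terms, your proposal has a genuine gap, and it is exactly the gap you yourself flag. The reductions you carry out are correct and standard: translating so that $\min A=\min B=0$, observing that $|A+\{0,\beta\}|=2n-r_A(\beta)$, disposing of the case $r_A(\beta)\le 1$ (in particular $\beta\ge\alpha$), the inclusion--exclusion criterion $r_A(b_2)+r_A(\beta-b_2)\le n-k+1$, and the observation that $|A+B|\ge 2n-1$ guarantees at least $k-1$ missing sums spread over the remaining translates. But all of this is the easy half of the theorem. The entire content of the result is the step you defer: when $k=r_A(\beta)\ge 2$, one must find a \emph{single} $b_2\in B$ whose translate contributes $k-1$ new sums, and as you correctly note, neither averaging nor pigeonhole over the $n-2$ remaining translates achieves this. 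The proposed remedies --- ``a case analysis distinguishing whether $B$ is AP-like or dispersed,'' taking $b_2$ to be the second-largest element of $B$, or an unspecified induction on $n$ --- are not arguments but candidate directions, and none is obviously completable. (For instance, the natural induction that deletes $\max A$ and $\max B$ recovers only $2n-2$ sums, one short of the target; this off-by-one is the classical obstruction in such arguments.) The actual proof in \cite{BLT} resolves precisely this case by decomposing $A$ into its maximal arithmetic progressions with common difference $\beta=\max B-\min B$ (so that $|A+\{0,\beta\}|=n+c$ where $c$ is the number of such progressions) and then making a carefully justified extremal choice of the third element relative to that decomposition; this structural step is what your outline is missing. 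As written, the proposal is a plausible plan with the hard lemma left unproved, so it does not constitute a proof of the theorem.
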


We will also make use of Freiman's rectification principle \cite{BLR, Frei1}. Recall that a Freiman $s$-homomorphism $\phi:A\to B$ is a map such that $\sum_{i=1}^{s}\phi(a_i)=\sum_{i=1}^{s}\phi(a_i')$ whenever $\sum_{i=1}^{s}a_i=\sum_{i=1}^{s}a_i'$, and it is called an $s$-isomorphism when $\phi^{-1}$ is also a Freiman $s$-homomorphism. 
\begin{thm}[\cite{BLR,Frei1}]\label{thm:rect}
Let $A$ be a subset of $\mathbb{Z}_p$. For $\lambda>0$ and $s>0$, there exists $c>0$ such that if $|A|<cp$ and $|A+A|\le \lambda|A|$, then there is a Freiman $s$-isomorphism between $A$ and a subset of $\mathbb{Z}$. 
\end{thm}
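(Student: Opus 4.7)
Plan: I would derive the Freiman $s$-isomorphism by placing $A$ inside a sufficiently well-behaved (``$s$-proper'') generalized arithmetic progression in $\mathbb{Z}_p$ and then reading off coordinates into $\mathbb{Z}$.

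First, I would apply the Plünnecke--Ruzsa inequality to the hypothesis $|A+A|\le \lambda|A|$ to obtain the uniform bound $|sA - sA| \le K|A|$ for $K = K(\lambda,s) = \lambda^{2s}$. This is what is actually needed downstream, since the $s$-homomorphism condition involves all $s$-fold sums and differences, not just $A+A$.

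Second, I would invoke Freiman's theorem in $\mathbb{Z}_p$ in its quantitative form (due to Freiman and Ruzsa, with possible Chang-type refinements) to conclude the existence of an $s$-proper generalized arithmetic progression $P = \{a_0 + k_1 v_1 + \cdots + k_d v_d : 0 \le k_i < N_i\}\subseteq \mathbb{Z}_p$ containing $A$, of dimension $d\le d(\lambda,s)$ and volume $|P|=\prod_i N_i \le C(\lambda,s)|A|$. Here ``$s$-proper'' means that the map $(k_1,\dots,k_d)\mapsto \sum_i k_i v_i$ is injective on the enlarged box $[0,sN_1)\times\cdots\times[0,sN_d)\subseteq \mathbb{Z}^d$. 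The condition $s^d \prod_i N_i < p$ needed for $s$-properness is guaranteed by the hypothesis $|A|<cp$, provided we take $c = c(\lambda,s)$ of order $1/(s^d C(\lambda,s))$, which is where the theorem's constant $c$ comes from.

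Third, $s$-properness implies directly that the coordinate map $\psi\colon P\to\mathbb{Z}^d$, $a_0+\sum k_i v_i\mapsto (k_1,\dots,k_d)$, is a Freiman $s$-isomorphism from $P\subseteq\mathbb{Z}_p$ onto the box $\prod_i [0,N_i)\subseteq \mathbb{Z}^d$. Composing with the standard ``$M$-adic'' map $\chi\colon\mathbb{Z}^d\to\mathbb{Z}$, $(k_1,\dots,k_d)\mapsto \sum_i M^{i-1}k_i$ for $M$ sufficiently large compared to $s\max_i N_i$, which is itself a Freiman $s$-isomorphism on the box, yields a Freiman $s$-isomorphism $P\to\mathbb{Z}$. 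Restricting to $A\subseteq P$ gives the desired $s$-isomorphism from $A$ to a subset of $\mathbb{Z}$.

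The main obstacle is securing the $s$-proper GAP in the second step. This uses Freiman's theorem in $\mathbb{Z}_p$, whose proof proceeds via Bogolyubov's lemma (to find a large Bohr set inside $2A-2A$ and hence inside $sA-sA$) and a geometry-of-numbers argument (to cover the Bohr set by a proper GAP), together with a standard volume/dimension-boosting trick to upgrade from proper to $s$-proper. This is classical but nontrivial; once such a GAP is in hand, the remainder is routine coordinate bookkeeping using the bounded dimension and the freedom to take $M$ large.
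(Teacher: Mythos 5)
This statement is Freiman's rectification principle, which the paper cites from \cite{BLR,Frei1} and does not prove; so there is no in-paper argument to match yours against, and the relevant comparison is with the classical proofs. Your outline is correct and would work, but it is a genuinely different (and much heavier) route than the standard one. The classical argument behind \cite{BLR,Frei1} (and Green--Ruzsa's ``Sets with small sumset and rectification'') is: use Pl\"unnecke--Ruzsa exactly as in your first step to get $|sA-sA|\le \lambda^{2s}|A|<\lambda^{2s}cp$, then show by an averaging/second-moment (or Fourier) argument over dilations that some dilate $\lambda\cdot A$ is contained in an interval of length at most $p/(2s)$; since dilation is a Freiman isomorphism of every order and a set in such a short interval lifts to $\mathbb{Z}$ with no wraparound among $s$-fold sums, the $s$-isomorphism follows immediately. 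Your route instead invokes the full Freiman theorem in $\mathbb{Z}_p$ to produce an $s$-proper GAP and then does coordinate bookkeeping; the final two steps (properness $\Rightarrow$ isomorphism onto a box, base-$M$ embedding of the box into $\mathbb{Z}$) are standard and correct. Two cautions if you pursue your route: first, many proofs of Freiman's theorem in $\mathbb{Z}_p$ (or $\mathbb{Z}_N$) themselves begin by rectifying into $\mathbb{Z}$ and then applying the integer Freiman theorem, so you must use a proof that avoids this (the Bogolyubov-plus-geometry-of-numbers route you name, as in Green--Ruzsa's general abelian groups paper, is fine); second, upgrading the GAP to an $s$-proper one inside the torsion group $\mathbb{Z}_p$ while keeping $A\subseteq P$ and $s^d\prod N_i<p$ needs Chang's properness lemma and is exactly where the hypothesis $|A|<cp$ is consumed, as you correctly note. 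The trade-off is clear: your approach buys nothing extra here and costs the full strength of Freiman's theorem, whereas the dilation argument is elementary given Pl\"unnecke--Ruzsa.
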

We use the following version of the Pl\"unnecke-Ruzsa Inequality with different summands \cite{Ruz}.
\begin{lem}[The Pl\"unnecke-Ruzsa Inequality, \cite{Ruz}]\label{lem:PR}
Let $X,Y_1,\dots,Y_k$ be subsets of an abelian group $G$. Assume that $|X+Y_i|\le \alpha_i |X|$ for each $i\le k$. Then $|Y_1+\dots +Y_k|\le \alpha_1\dots \alpha_k |X|$. 
\end{lem}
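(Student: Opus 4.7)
My plan is to prove this classical inequality via Petridis's streamlined 2012 approach, which bypasses the graph-theoretic machinery of the original Pl\"unnecke--Ruzsa argument.

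First, I would establish Petridis's key lemma: for finite subsets $X, Y$ of an abelian group, if $X^*$ is a nonempty subset of $X$ minimizing $K := |X^* + Y|/|X^*|$ among all nonempty subsets, then
\[
|X^* + Y + Z| \le K \cdot |X^* + Z|
\]
for every finite $Z \subseteq G$. The proof is by induction on $|Z|$. The base case $|Z|=1$ is immediate from the definition of $K$. For the inductive step from $Z$ to $Z \cup \{z\}$, I would define $X^{**} := \{x \in X^* : x + Y + z \subseteq X^* + Y + Z\}$; the number of new elements added to $X^* + Y + Z$ is at most $|X^* + Y| - |X^{**} + Y|$, while the number of new elements added to $X^* + Z$ is at least $|X^*| - |X^{**}|$ (since $x + z \in X^* + Z$ forces $x \in X^{**}$). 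Combined with the minimality bound $|X^{**} + Y| \ge K|X^{**}|$ and the inductive hypothesis applied to $Z$, this yields the claimed inequality.

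Next, I would prove the lemma by induction on $k$. The base case $k=1$ is $|Y_1| \le |X + Y_1| \le \alpha_1|X|$. For the inductive step, apply the key lemma to $X$ with $Y := Y_k$ to obtain a nonempty $X^* \subseteq X$ with $K^* \le \alpha_k$ such that $|X^* + Y_k + Z| \le K^* |X^* + Z|$ for all $Z$. Setting $Z := Y_1 + \ldots + Y_{k-1}$ and using $X^* \subseteq X$ gives
\[
|Y_1 + \ldots + Y_k| \le |X^* + Y_1 + \ldots + Y_k| \le \alpha_k \cdot |X + Y_1 + \ldots + Y_{k-1}|,
\]
so the induction closes provided we can establish $|X + Y_1 + \ldots + Y_{k-1}| \le \alpha_1 \cdots \alpha_{k-1} |X|$.

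The hardest part is precisely this gap between the natural inductive hypothesis (which controls only $|Y_1 + \ldots + Y_{k-1}|$) and the sharper ``plus $X$'' bound needed above. To overcome this, I would strengthen the inductive claim to $P(k)$: $|X + Y_1 + \ldots + Y_k| \le \alpha_1 \cdots \alpha_k |X|$, which directly implies Lemma \ref{lem:PR} via $|Y_1 + \ldots + Y_k| \le |X + Y_1 + \ldots + Y_k|$. The inductive step for $P(k)$ uses the Ruzsa sum-triangle inequality $|X + B + C| \cdot |X| \le |X + B| \cdot |X + C|$ (the $k=2$ case of $P$, itself a standard consequence of Petridis's key lemma combined with a Ruzsa covering step to pass from the Petridis minimizer back to $X$) applied with $B := Y_k$ and $C := Y_1 + \ldots + Y_{k-1}$; combined with $P(k-1)$, this yields $P(k)$ and hence the theorem.
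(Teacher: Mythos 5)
The paper does not prove this lemma at all --- it imports it from Ruzsa as a known result (the ``different summands'' Pl\"unnecke--Ruzsa inequality, originally proved via Pl\"unnecke's graph method by Ruzsa and Gyarmati--Matolcsi--Ruzsa, with a modern treatment by Petridis). So the only issue is whether your argument is correct, and it has a genuine gap exactly at the step you identify as the hardest. Your statement and proof sketch of Petridis's key lemma are fine, and your argument is complete for $k\le 2$: for $k=2$ the reduction lands on $P(1)$, which is just the hypothesis, so you do correctly recover $|Y_1+Y_2|\le \alpha_1\alpha_2|X|$.

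The problem is $k\ge 3$, where you need $P(2)$, i.e.\ $|X+B+C|\,|X|\le |X+B|\,|X+C|$ for \emph{arbitrary} finite $X,B,C$. This is not the Ruzsa sum-triangle inequality: the standard statement has $|B+C|$, not $|X+B+C|$, on the left. The strengthened form with the full set $X$ added back in is a separate question raised by Ruzsa precisely because it does not follow from the standard machinery, and it is not a consequence of Petridis's lemma plus covering: Petridis gives $|X^*+B+C|\le K|X^*+C|$ only for the minimizing subset $X^*$, and since $X^*+B+C\subseteq X+B+C$ this bounds the wrong quantity, while a Ruzsa covering step (replacing $B$ by $(X^*-X^*)+T$ with $|T|\le K$) costs multiplicative factors and difference sets that destroy the clean inequality. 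So your induction on $P(k)$ does not close. The standard repair is to prove instead that there exists a nonempty $X'\subseteq X$ with $|X'+Y_1+\cdots+Y_k|\le \alpha_1\cdots\alpha_k|X'|$, which suffices because $|Y_1+\cdots+Y_k|\le |X'+Y_1+\cdots+Y_k|$ and $|X'|\le |X|$; but note that even this cannot be done by naively iterating Petridis's lemma, since the minimizer for one summand need not satisfy $|X'+Y_i|\le \alpha_i|X'|$ for the others --- this is exactly why the different-summands version (Gyarmati--Matolcsi--Ruzsa; Petridis) requires its own argument.
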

Using the Theorem \ref{thm:rect} and Lemma \ref{lem:PR}, we obtain the following corollary. 
\begin{cor}\label{cor:rect}
For $\lambda,s,t>0$, there exists $c>0$ such that the following holds. Let $A,B$ be subsets of $\mathbb{Z}_p$ with $|A|=|B|<cp$. If $|A+B|\le \lambda|A|$, then there is a Freiman $s$-isomorphism between $t(A\cup B)$ and a subset of $\mathbb{Z}$. %
\end{cor}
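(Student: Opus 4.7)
The plan is to verify the hypotheses of Theorem~\ref{thm:rect} for the set $U := t(A \cup B)$, which will immediately yield the desired Freiman $s$-isomorphism with a subset of $\mathbb{Z}$. This amounts to checking (a) that $U$ has doubling bounded by some $\lambda'$ depending only on $\lambda$ and $t$, and (b) that $|U| < c' p$, where $c'$ is the constant produced by Theorem~\ref{thm:rect} for the parameters $(\lambda', s)$. If both (a) and (b) hold, choosing the constant $c$ in the corollary appropriately will complete the proof.

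For (a), I would first observe that $A \cup B$ itself has bounded doubling. Applying Lemma~\ref{lem:PR} with $X = A$ and $Y_1 = Y_2 = B$ (with $\alpha_1 = \alpha_2 = \lambda$) gives $|B+B| \le \lambda^2 |A|$, and symmetrically with $X = B$, $Y_1 = Y_2 = A$ gives $|A+A| \le \lambda^2 |A|$. Since $(A \cup B) + (A \cup B) = (A+A) \cup (A+B) \cup (B+B)$, this yields
\[
|(A\cup B)+(A\cup B)| \le |A+A|+|A+B|+|B+B| \le (2\lambda^2+\lambda)|A| \le 3\lambda^2 |A\cup B|,
\]
where I used $\lambda \ge 1$ (from $|A+B| \ge |A|$). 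So $A \cup B$ has doubling at most $K := 3\lambda^2$. A second application of Lemma~\ref{lem:PR} with $X = Y_1 = \cdots = Y_{2t} = A \cup B$ then yields $|2t(A \cup B)| \le K^{2t} |A \cup B|$. Since $|tY| \ge |Y|$ for any nonempty $Y$ (by an easy induction on $t$, shifting a fixed element), we have $|t(A \cup B)| \ge |A \cup B|$, so the doubling of $U$ is at most $K^{2t}$, establishing (a) with $\lambda' := K^{2t}$.

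For (b), Lemma~\ref{lem:PR} also gives $|U| \le K^t |A \cup B| \le 2K^t |A|$. Selecting $c' > 0$ as guaranteed by Theorem~\ref{thm:rect} for the parameters $(\lambda', s)$, and setting $c := c'/(2K^t)$ in the corollary statement, we obtain $|U| < c' p$ whenever $|A| < cp$. Theorem~\ref{thm:rect} then applies to $U$ and produces the required $s$-isomorphism.

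I do not foresee a genuine obstacle here: the argument is a clean reduction of the corollary to Theorem~\ref{thm:rect} via Pl\"unnecke--Ruzsa bookkeeping that transfers the hypothesis on $|A+B|$ to control of the full iterated sumset $t(A \cup B)$. The only care needed is to track that the final constant $c$ depends only on $\lambda$, $s$, and $t$, which is transparent from the above.
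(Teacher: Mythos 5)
Your proof is correct and takes essentially the same route as the paper: both reduce to Theorem~\ref{thm:rect} by using Lemma~\ref{lem:PR} to bound $|2t(A\cup B)|$ by $O_{\lambda,t}(|A|)$, from which both the doubling bound and the size bound on $t(A\cup B)$ follow. The only (immaterial) difference is bookkeeping — you first establish bounded doubling of $A\cup B$ and iterate Pl\"unnecke--Ruzsa with $X=A\cup B$, whereas the paper keeps $X=B$ and bounds $|kB+\ell A|$ directly.
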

\begin{proof}
By Lemma \ref{lem:PR} applied with $X=A,Y_1=Y_2=B$, we have $|2B|\ll_\lambda |A|$. By another application of Lemma \ref{lem:PR} with $X = B$, we can obtain $|kB+\ell A|\ll_{k,\ell,\lambda}|A|$ and hence $|k(A\cup B)|\ll_{k,\lambda}|A|$. In particular we have $|2t(A\cup B)|\ll_{t,\lambda}|A|$, so Theorem \ref{thm:rect} implies that $t(A\cup B)$ is Freiman $s$-isomorphic to a subset $R\subseteq \mathbb{Z}$. %
\end{proof}

The following simple lemma is a warm-up to the proof of Theorem \ref{thm:p-3elm}, and suggests the general intuition behind our approach. 
\begin{lem}\label{lem:bounded-doubling}
For every $\lambda>0$, there is $\alpha>0$ such that the following holds. Let $A,B$ be subsets of $\mathbb{F}_p$ of size $n\le \alpha p$. Assume that $|A+B|\le \lambda n$. Then there exists a subset $B_{(3)}$ of $B$ of size at most three such that $|A+B_{(3)}| \ge 2n-1$. 
\end{lem}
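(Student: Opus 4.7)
The plan is to rectify the problem from $\mathbb{F}_p$ to $\mathbb{Z}$ using Corollary \ref{cor:rect} and then invoke the known three-element Cauchy--Davenport bound over the integers from Theorem \ref{thm:3-integer}. The essential observation is that the conclusion we seek, namely a lower bound on $|A+B_{(3)}|$, is preserved under any Freiman $2$-isomorphism defined on $A\cup B$, so it suffices to transfer the problem to a setting where Theorem \ref{thm:3-integer} applies directly.

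First, I would apply Corollary \ref{cor:rect} with the given $\lambda$, with $s=2$, and with $t=1$, choosing $\alpha>0$ to be the constant $c$ it produces. Since $|A+B|\le \lambda n$ and $|A|=|B|=n\le \alpha p$, the corollary yields a Freiman $2$-isomorphism $\phi\colon A\cup B \to R$ for some subset $R\subseteq \mathbb{Z}$. Set $A'=\phi(A)$ and $B'=\phi(B)$; since $\phi$ is injective on $A\cup B$, these integer sets satisfy $|A'|=|B'|=n$.

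Next, I would apply Theorem \ref{thm:3-integer} to $A'$ and $B'$, obtaining a subset $B'_{(3)}\subseteq B'$ with $|B'_{(3)}|\le 3$ and $|A'+B'_{(3)}|\ge 2n-1$. Setting $B_{(3)}:=\phi^{-1}(B'_{(3)})\subseteq B$ gives a candidate of the required size. The key point is then to verify
\[
|A+B_{(3)}|=|A'+B'_{(3)}|.
\]
Indeed, consider the map $A+B_{(3)}\to A'+B'_{(3)}$ sending $a+b\mapsto \phi(a)+\phi(b)$ for $a\in A$ and $b\in B_{(3)}\subseteq B$. Because $\phi$ is a Freiman $2$-homomorphism on $A\cup B$, this map is well-defined: if $a_1+b_1=a_2+b_2$ then $\phi(a_1)+\phi(b_1)=\phi(a_2)+\phi(b_2)$. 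Because $\phi^{-1}$ is also a Freiman $2$-homomorphism on $R$, the map is injective; surjectivity is clear. Hence the sumsets have equal size, and $|A+B_{(3)}|\ge 2n-1$.

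The main conceptual step, and arguably the only subtlety, is ensuring that the Freiman isomorphism has the right order: a $2$-isomorphism is precisely what is needed to preserve the combinatorial identity $a_1+b_1=a_2+b_2$ governing collisions in $A+B_{(3)}$. A $1$-isomorphism (a bijection) would not suffice, while higher orders are unnecessary, so choosing $s=2$ in the invocation of Corollary \ref{cor:rect} is the correct calibration. Beyond this bookkeeping, the argument is a direct reduction, which is why this lemma serves as a clean warm-up for the more delicate arguments needed in the full proof of Theorem \ref{thm:p-3elm}, where one cannot afford to pass to a subset of bounded doubling and must rectify only certain pieces of $A$ and $B$ while still controlling the global sumset.
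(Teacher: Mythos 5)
Your proof is correct and follows essentially the same route as the paper: invoke Corollary \ref{cor:rect} to obtain a Freiman $2$-isomorphism from $A\cup B$ to a subset of $\mathbb{Z}$, apply Theorem \ref{thm:3-integer} on the integer side, and pull back the three-element set using the fact that a $2$-isomorphism preserves equalities of pairwise sums. The only difference is that you spell out the well-definedness and injectivity check, and explicitly justify why order $2$ is exactly the right calibration; the paper leaves this implicit.
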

\begin{proof}
By Corollary \ref{cor:rect}, for $\alpha$ sufficiently small, %
there is a Freiman $2$-isomorphism $\psi$ from $(A\cup B)$ to a subset of $\mathbb{Z}$. 
By Theorem \ref{thm:3-integer}, there exists a subset $B_{(3)}$ of $B$ so that $|\psi(A)+\psi(B_{(3)})| \ge 2n-1$. Noting that since $\psi$ is a Freiman $2$-isomorphism, $\psi(a)+\psi(b)=\psi(a')+\psi(b')$ if and only if $a+b=a'+b'$, this then implies $|A+B_{(3)}|=|\psi(A)+\psi(B_{(3)})|\ge 2n-1$. 
\end{proof}

The full proof of Theorem \ref{thm:p-3elm} is much more involved, due to the fact that we cannot reduce to the case $|A+B|\ll|A|$, but can only find suitable dense subsets $\tilde{A}$ of $A$ with $|\tilde{A}+B|\ll|A|$. Making the choice of elements $b_1,b_2,b_3$ greedily, we can find a subset $A'$ of $A$ of size at least $|A|/2$ and a set $C$ with $|C|\ll |A|$ such that for each $a\in A'$, at least a positive fraction of $b\in B$ satisfies $a'+b\in C$. From this we obtain the bound $|A'+B|\ll|A|$. We then bootstrap this subset to a slightly larger subset $A''$ consisting of all $a''\in A$ with $a''+b \in (A'+B)\cup A\cup (A+x)$ for a positive fraction of $b\in B$. At this point, we need to employ another argument (inspired by the argument in the integer case) in order to handle the elements of $A$ that are not contained in $A''$. To proceed with the proof in full detail, we first state two useful auxiliary results. 

\begin{lem}[Theorem 27, \cite{BLT}]\label{lem:close-interval}
There is $c>0$ such that the following holds. Let $A,B$ be subsets of $\mathbb{F}_p$ of size $n\le cp$, where $B$ is contained in an interval of length at most $n(1+c)$. Then there exists a subset $B_{(3)}$ of $B$ of size at most three such that $|A+B_{(3)}| \ge 2n-1$. 
\end{lem}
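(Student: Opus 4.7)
My plan is to exploit the short-interval hypothesis on $B$ to run an argument closely modeled on the integer-case result Theorem~\ref{thm:3-integer}, modified to handle the ambient group $\mathbb{F}_p$. The hope is that because $B$ lives inside a short interval, sums involving $B$ essentially behave as they do in $\mathbb{Z}$, while the set $A$ can be arbitrary.

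First I would normalize: after translation, write $B \subseteq I = \{0, 1, \dots, \ell\} \subseteq \mathbb{F}_p$ with $\ell \le (1+c)n - 1$. Choosing $c$ small enough that $2\ell + 1 < p$, the interval $I$ together with $I - I$ sits inside $\mathbb{F}_p$ without wraparound, so $B$ inherits the linear order from the integers and the endpoints $b_1 := \min B$ and $b_3 := \max B$ are well defined. Set $d := b_3 - b_1 \in [n-1,\ell]$.

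The second step is to try the two-element candidate $\{b_1, b_3\}$. The overlap identity $|(A+b_1) \cap (A+b_3)| = |A \cap (A-d)| =: r$ gives $|A + \{b_1, b_3\}| = 2n - r$. If $r \le 1$ we are done, so assume $r \ge 2$ and look for $b_2 \in B \setminus \{b_1, b_3\}$ for which $A + b_2$ contributes at least $r - 1$ new elements. By inclusion--exclusion on three sets, this is equivalent to finding $b_2$ with
\[
|A \cap (A - (b_2 - b_1))| + |A \cap (A - (b_2 - b_3))| - |(A+b_1)\cap(A+b_2)\cap(A+b_3)| \le n - r + 1.
\]
Writing $\Delta_t := |A \cap (A - t)|$, I would produce such $b_2$ by averaging over $b \in B$: because $B$ has density at least $1/(1+c)$ inside $I$, the difference multisets $\{b - b_1 : b \in B\}$ and $\{b - b_3 : b \in B\}$ concentrate in a short interval, and a suitable convexity or dyadic argument on the energy function $t \mapsto \Delta_t$ should yield a $b_2$ from the ``bulk'' of $B$ for which the displayed inequality holds.

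The third step, which I expect to be the main obstacle, is the bad case where no such $b_2$ exists. In that regime $\Delta_t$ must be large for a substantial fraction of $B$-differences, which by a Pl\"unnecke--Ruzsa or BSG-style argument forces $A$ itself to be structured, essentially close to a short arithmetic progression. In that case both $A$ and $B$ embed Freiman-isomorphically into $\mathbb{Z}$ through a common short interval, and the integer statement Theorem~\ref{thm:3-integer} applies directly to $A$ and $B$ viewed as subsets of $\mathbb{Z}$ to produce the required $B_{(3)}$. Throughout, the hypotheses $n \le cp$ and $|I| \le (1+c)n$ are used to ensure that no coincidence $a + b = a' + b'$ in $\mathbb{F}_p$ arises spuriously from modular reduction, so the integer intuition transfers faithfully. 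The technical heart of the argument is setting up this dichotomy between the averaging case and the structured case cleanly enough that the constant $2n - 1$ (rather than $2n - O(1)$) is achieved.
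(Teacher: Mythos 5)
First, a point of comparison: the paper does not prove this lemma at all. It is imported verbatim as Theorem 27 of \cite{BLT}, so there is no internal argument to measure your proposal against; it must stand on its own.

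On its own terms, your outline has two genuine gaps. The first is that the central step --- producing $b_2$ by ``averaging over $b\in B$'' via ``a suitable convexity or dyadic argument on the energy function $t\mapsto \Delta_t$'' --- is not an argument but a hope. Nothing in the hypotheses makes $\Delta_t$ convex or monotone, and the target is the exact bound $2n-1$: an averaging step that loses even an additive $O(1)$ fails. You would need to exhibit a concrete inequality, e.g.\ a summed identity over $b\in B$ showing that the average of $|(A+b)\setminus(A+\{b_1,b_3\})|$ is at least $r-1$, and verify it; as written the dichotomy between the ``averaging case'' and the ``structured case'' is never established. The second, more fundamental, gap is in the fallback case. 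If $\Delta_t$ is large for many $t$ in a short interval, Balog--Szemer\'edi--Gowers-type reasoning only tells you that a \emph{large subset} $A'$ of $A$ is structured (e.g.\ has small sumset, or sits in a short progression); the remaining $\epsilon n$ elements of $A$ can be scattered arbitrarily in $\mathbb{F}_p$, so there is no Freiman isomorphism carrying all of $A\cup B$ into $\mathbb{Z}$, and Theorem~\ref{thm:3-integer} cannot be applied to $A$ and $B$ directly. Moreover, proving $|A'+B_{(3)}|\ge 2|A'|-1$ for a dense subset $A'$ does not yield $|A+B_{(3)}|\ge 2n-1$ with $n=|A|$: the Cauchy--Davenport target scales with $|A|$, and as this paper stresses repeatedly (see the discussion before Lemma~\ref{lem:close-interval} and around Theorem~\ref{thm:BLT-almost-all}), such statements are not stable under deleting a small fraction of $A$. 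Rectification results such as Theorem~\ref{thm:rect} or Corollary~\ref{cor:rect} also require a bound on $|A+A|$ or $|A+B|$, which the hypotheses of the lemma do not provide. So the ``bad case'' of your dichotomy is not actually closed, and closing it is where the real work of Theorem 27 of \cite{BLT} lies.
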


\begin{lem}[\cite{Gryn}]\label{lem:interval-diff}
There is $c>0$ such that the following holds. Let $B$ be a subset of $\mathbb{F}_p$ with $|B-B| < (2+c)|B| < 4cp$. Then B is contained in an interval of length at most $(1+c)|B|$. 
\end{lem}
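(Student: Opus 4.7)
The plan is to reduce this $\mathbb{F}_p$-statement to the classical $3k-4$ theorem for difference sets over $\mathbb{Z}$ via Freiman's rectification principle, which is already set up in the excerpt.

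First, observe that the two hypotheses $|B-B|<(2+c)|B|$ and $|B-B|<4cp$ together force $|B|\lesssim cp$ (using $|B-B|\ge 2|B|-1$), so $B$ occupies only a small fraction of $\mathbb{F}_p$. Moreover, the bounded-doubling condition $|B-B|=O_c(|B|)$ propagates through the Pl\"unnecke--Ruzsa inequality (Lemma \ref{lem:PR}) to give $|kB-\ell B|\ll_{k,\ell,c}|B|$ for any bounded $k,\ell$. This places us squarely in the regime of Freiman's rectification theorem (Theorem \ref{thm:rect}, applied to $B\cup(-B)$ as in Corollary \ref{cor:rect}): for $c$ sufficiently small, we obtain a Freiman $2$-isomorphism $\psi\colon B\to B'\subset\mathbb{Z}$. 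Since a Freiman $2$-isomorphism preserves the difference structure exactly (the defining identity $\psi(x)+\psi(y)=\psi(z)+\psi(w)\iff x+y=z+w$ rearranges into $\psi(x)-\psi(z)=\psi(w)-\psi(y)\iff x-z=w-y$), we get $|B'-B'|=|B-B|<(2+c)|B'|$.

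Next, I would invoke Freiman's classical $3k-4$ theorem for difference sets over $\mathbb{Z}$: any finite $B'\subseteq\mathbb{Z}$ with $|B'-B'|\le 3|B'|-4$ is contained in an arithmetic progression of length at most $|B'-B'|-|B'|+1$. Choosing $c<1$ and assuming $|B|$ is sufficiently large (the small-$|B|$ regime being trivial, as we may absorb any additive constants by shrinking $c$), the hypothesis on $B'$ meets this bound, giving an AP $P'\subset\mathbb{Z}$ with $B'\subseteq P'$ and $|P'|<(1+c)|B'|$. Pulling $P'$ back through $\psi^{-1}$ — which is legitimate because $P'$ only involves $2$-fold differences of elements of $B'$ — produces an arithmetic progression $P\subseteq\mathbb{F}_p$ of the same length containing $B$. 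A final dilation by the inverse (in $\mathbb{F}_p$) of the common difference of $P$ is itself a Freiman isomorphism preserving both hypothesis and conclusion, and converts $P$ into an honest interval, delivering the statement as phrased.

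The main technical obstacle is ensuring that the sharp linear constants of the integer $3k-4$ theorem transfer cleanly through the rectification: one needs $\psi$ to preserve $|B-B|$ exactly (not just up to constants), which is precisely why a $2$-isomorphism — rather than merely a $2$-homomorphism or a weaker Freiman map — is required. A secondary concern is coordinating the threshold $c$ of the conclusion with the threshold $\alpha$ for rectification in Theorem \ref{thm:rect}; however, since the target relation $|B-B|<(2+c)|B|$ is open in $c$, choosing $c$ small enough to both satisfy the rectification hypothesis and to force $(2+c)|B'|\le 3|B'|-4$ asymptotically causes no genuine trouble. Finally, one should note that an alternative, Fourier-analytic route — studying the large spectrum of $1_B$ and extracting an approximate AP structure directly in $\mathbb{F}_p$ — would in principle avoid rectification, but the rectification reduction is both shorter and quantitatively adequate here.
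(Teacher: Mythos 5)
There is no internal proof to compare against here: the paper quotes this lemma from Grynkiewicz's book \cite{Gryn} (it is a $\mathbb{Z}_p$-analogue of the $3k-4$ theorem, in the tradition of Freiman's $2.4$-theorem), so your task was really to reconstruct a proof from scratch. Your overall route --- Pl\"unnecke--Ruzsa to control iterated sumsets, rectification via Theorem \ref{thm:rect}, the integer $3k-4$ theorem for difference sets, then pull back --- is a legitimate and standard way to derive such statements, and most of the reductions are set up correctly: you rightly note that a Freiman $2$-isomorphism preserves $|B-B|$ exactly, that the difference-set (not sumset) form of $3k-4$ is the one needed, and that the constant $c$ must be coordinated with the rectification threshold.

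Two steps have genuine gaps. First, the pullback. You assert that pulling the progression $P'$ back through $\psi^{-1}$ is ``legitimate because $P'$ only involves $2$-fold differences,'' but $\psi^{-1}$ is defined only on $B'$, and containment in an arithmetic progression of length $L>|B'|$ is \emph{not} preserved by Freiman $2$-isomorphisms: $\{0,1,5\}$ and $\{0,1,10\}$ are $2$-isomorphic (all six pairwise sums are distinct in each), yet their minimal containing progressions have lengths $6$ and $11$. What saves the argument is the density $|B'|\ge L/(1+c)$: after normalizing $P'$ to $[0,L-1]$, the quantity $\Delta(h)=\psi^{-1}(b)-\psi^{-1}(a)$ for $b-a=h$ with $a,b\in B'$ is well defined by the $2$-isomorphism property, and an induction on $h$ (using that the gaps of $B'$ in $[0,L-1]$ have total length at most $cL$, so every realized difference $h\ge 2$ admits a witness pair with an element of $B'$ strictly between) shows $\Delta(h)=h\Delta(1)$, i.e.\ $\psi^{-1}$ is affine on $B'$, whence $B$ lies in an arithmetic progression of length $L$ in $\mathbb{F}_p$. (Alternatively one could use the explicit dilate-and-lift form of the Bilu--Lev--Ruzsa map, but Theorem \ref{thm:rect} as stated does not hand you that.) Either way, this step needs an actual argument. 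Second, the endgame: your closing dilation shows that $d^{-1}B$, not $B$, lies in an interval, so it does not prove the stated conclusion. In fact the statement with a literal interval is false (take $B=\{0,3,6,\dots\}$, or even $B=\{0,5\}$); the conclusion your argument reaches, and the one proved in \cite{Gryn}, is containment in an \emph{arithmetic progression} of length at most $(1+c)|B|$. This discrepancy is harmless for the paper, since the application in the proof of Theorem \ref{thm:p-3elm} is invariant under simultaneously dilating $A$ and $B$, but you should state the conclusion you can actually reach rather than dilating it away.
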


\begin{proof}[Proof of Theorem \ref{thm:p-3elm}]
By Lemma \ref{lem:close-interval}, in the remaining part of the proof, we can assume that $B$ is not contained in an interval of length at most $(1+c)n$, and hence, by Lemma~\ref{lem:interval-diff}, $|B-B|\ge (2+c)n$.

\noindent \textbf{Step 1.} We first show that we can assume that there exists $B_{(2)}\subseteq B$ of size at most two where $|A+B_{(2)}| \ge (3/2+c_1)n$ for an appropriate constant $c_1>0$. 

If not, then for any two elements $b_1,b_2\in B$, we have that $|(A+b_2-b_1)\cap A|>(1/2-c_1)n$. 
By our assumption, we have that for any $b\in B-B$, we can write $b=a_1-a_2$ in at least $n(1/2-c_1)$ ways. Since the number of pairs $(a_1,a_2)$ is $n^2$, we conclude that $|B-B|\le n^2 / (n(1/2-c_1))$. For $c_1$ sufficiently small in $c$, we then have $|B-B|<(2+c)n$, a contradiction.

\noindent \textbf{Step 2.} 
Assume that for all $B_{(3)}\subseteq B$ of size at most three, $|A+B_{(3)}| \le (2+c_1/2)n$. From Step 1, there exists $B_{(2)}\subseteq B$ of size at most two with $|A+B_{(2)}| \geq (3/2+c_1)n$. Then for all $b\in B\setminus B_{(2)}$, we have $|(A+b)\setminus (A+B_{(2)})|\le (1/2+c_1/2-c_1)n=(1/2-c_1/2)n$. 

Recall the construction from Section~\ref{sec:lower-bound-sumsets} of a bipartite graph $\Gamma_C$ on vertex sets $A,B$ where $a\in A$ and $b\in B$ are connected if and only if $a+b\in C$. Let $C=A+B_{(2)}$. Then each $b\in B$ is adjacent in $\Gamma_C$ to at least $(1/2+c_1/2)n$ vertices $a\in A$. Thus, every $b,b'\in B$ are connected by at least $c_1 n$ walks of length two in the graph. By Lemma \ref{lem:path-even}, this implies that $|B-B|\ll_{c_1}n$. 

Let $A'$ %
be the set of $a\in A$ with degree at least $c_1n/4$ in the graph. The number of edges in the graph is at most $|A'|n+(n-|A'|)c_1n/4$ and at least $(1/2+c_1/2)n|B|=(1/2+c_1/2)n^2$. Hence, 
\[
|A'| \ge (1/2+c_1/4)n.
\]
Furthermore, for any $a\in A'$ and $b\in B$, there exist at least $c_1^2n^2/4$ walks %
of length three between $a$ and $b$. By Lemma \ref{lem:path-odd}, this implies $|A'+B|\ll_{c_1} n$. %
By the Pl\"{u}nnecke-Ruzsa Inequality, Lemma \ref{lem:PR}, we have $|kB|\ll_{k,c_1} n$, and $|kB-k'B+\ell A'-\ell'A'|\ll_{k,k',\ell,\ell',c_1}n$. 

Let $s$ be a sufficiently large constant to be chosen later. By Corollary \ref{cor:rect}, we have a Freiman $(s+1)$-isomorphism $\phi$ between $A'\cup B$
and a subset $R$ of $\mathbb{Z}$. By translation, without loss of generality, we can assume that $0\in A'\cup B$ and $0\in R$, $R\subset \mathbb{Z}_{\ge 0}$, and $\phi(0)=0$. Let $m=\max(\phi(B))$, and let $x=\phi^{-1}(m)\in B$. Note that there is no solution to $r_2-r_1=tm$ for $r_1,r_2\in \phi(B) \setminus \{m\}$ and $|t|\ge 1$. %
Since $\phi$ is a Freiman $(s+1)$-isomorphism on $A'\cup B$, there is no solution to $b_2-b_1=tx=t(x-0)$ for $b_1,b_2\in B \setminus \{x\}$ and $1\le |t|\le s$, since such a solution would yield a solution to $\phi(b_2)-\phi(b_1)=t(m-0)$. %

Let $S=(A'+B)\cup A\cup (A+x)$, and consider the graph $\Gamma_S$ defined as before. Let $c_2$ be a small constant (depending only on $c_1$) to be chosen later. Let $A''$ be the set of $a\in A$ with degree in $\Gamma_S$ at least $c_2n$. We have that $|A''|\ge |A'| \ge (1/2+c_1/4)n$. As before, noting that $|S|\ll_{c_1} n$, we can apply Lemma~\ref{lem:path-odd} to show that $|A''+B|\ll_{c_1,c_2} n$ (note that the implicit constant does not depend on $s$). Let $k'$ be the maximum number of elements of $A''$ contained in a progression of common difference $x$ and length at most $s$. %
For any set $\tilde{A}$ which is the intersection of $A''$ and a progression of common difference $x$ and length at most $s$, we have $|\tilde{A}+B\setminus \{x\}|=|\tilde{A}|\cdot (|B|-1)$ since there is no solution to $tx=b_1-b_2$ for $b_1,b_2\in B\setminus \{x\}$, $0<t\le s$. On the other hand, we have $|\tilde{A}+B| \ll_{c_1,c_2} n$. Hence, $k'=\max |\tilde A|\leq \frac{|\tilde A+B|}{|B|-1}\ll_{c_1,c_2}1$. %

Note that 
\begin{align}
    \mathbb{E}_{b\in B,b\ne x}[|A+\{0,x,b\}|] &= |A+\{0,x\}|+\mathbb{E}_{b\in B,b\ne x}[|(b+A)\setminus (A+\{0,x\})|] \nonumber\\
    &= |A+\{0,x\}| + \sum_{a\in A} \frac{|(a+(B\setminus \{x\}))\setminus(A+\{0,x\})|}{|B|-1}. \label{eq:count-size}
\end{align}
Let $\tilde{n}$ be the minimum number of progressions of common difference $x$ that $A$ can be partitioned into. Then $|A\cup (A+x)|=n+\tilde{n}$. %
Let $B'=B\setminus \{x\}$%
. By the definition of $A''$, we have $|(a+B')\cap(A\cup (A+x))| \le c_2n$ for $a\notin A''$. %
For $a\in A''$, we will need the following claim to bound $|(a+B')\cap (A\cup (A+x))|$.
\begin{claim}\label{claim:count-size1}
If $a\in A''$, we have $|(a+B')\cap (A\cup (A+x))|\le \tilde{n}+4(n-|A''|)/s$. 
\end{claim}
\begin{proof}
Consider a fixed translate of $B'$ by $a\in A$ and consider its intersection with a progression $P$ with common difference $x$ in $A\cup (A+x)$. If $a+b_1=p_1,a+b_2=p_2$ are in the intersection of $a+B'$ and $P$, then $p_2-p_1=b_2-b_1$. Since there is no solution to $b_2-b_1=tx$ for $1\le |t|\le s$ and $b_1,b_2\in B'$, it must be the case that $p_2-p_1=\ell x$ where either $\ell=0$ or $|\ell| > s$. In particular, in every subprogression of $P$ of length $s$, there can be at most one intersection with $a+B'$. Hence, if there are $r\ge 2$ intersections of $a+B'$ and the progression $P$, then $|P|\ge (r-1)s$. Furthermore, for $s$ chosen sufficiently large, recall that there are at most $k'=O_{c_1,c_2}(1)<s/1000$ elements in the intersection of $A''$ with any progression of length $s$ with common difference $x$. Similarly, there are at most $s/1000$ elements in $A''+x$ in a progression of common difference $x$ of length $s$. Thus, $P$ must contain at least $|P|(1-\frac{s}{500}\frac{1}{s})\ge (r-1)s/2$ elements from $(A\setminus A'')\cup((A\setminus A'')+x)$. Hence, we obtain that 
\[
|(a+B')\cap P|=r\le 1+2(|P\cap (A\setminus A'')|+|P\cap ((A\setminus A'')+x)|)/s.
\]
Summing over $P$, we obtain $|(a+B')\cap (A\cup (A+x))|\le \tilde{n}+4(n-|A''|)/s.$ 
\end{proof}

Hence, if $\tilde{n}\ge 3c_2(n-1)$, we have from (\ref{eq:count-size}) and Claim \ref{claim:count-size1} that for $s \ge 16/c_2$, %
\begin{align*}
    \mathbb{E}_{b\in B,b\ne x}[|A+\{0,x,b\}|] &\ge n+\tilde{n}+ (1-c_2\frac{n}{n-1})(n-|A''|)+\frac{|A''|}{n-1}(n-1-\tilde{n}-4(n-|A''|)/s) \\
    &= 2n -\frac{\tilde{n}}{n-1}+ (n-|A''|)\left(\frac{\tilde{n}}{n-1}-1+1-c_2\frac{n}{n-1}-\frac{4|A''|}{(n-1)s}\right)\\
    &> 2n-2.
\end{align*}
This implies the existence of $b\in B$ such that $|A+\{0,x,b\}|\ge 2n-1$. 

Next, consider the case $\tilde{n}<3c_2(n-1)$. 
We have the following claim.
\begin{claim}\label{claim:count-size2}
For $a\in A$, we have $|(a+B)\cap (A\cup (A+b))| \le 2\tilde{n} + 2n/s + 1$.  
\end{claim}
\begin{proof} 
Note that for any fixed $b\in B$, $A\cup (A+b)$ can be written as a union of at most $2\tilde{n}$ progressions of common difference $x$ with total size at most $2n$. The argument in our proof of Claim \ref{claim:count-size1} shows that for any progression $P$ of common difference $x$ and any $a\in A$, $|(a+B')\cap P|\leq 1+|P|/s$. Hence, summing over a partition of $A\cup (A+b)$ into at most $2\tilde n$ partitions, we obtain
\[
|(a+B)\cap (A\cup (A+b))| \le 2\tilde{n} + 2n/s + 1.\qedhere
\]
\end{proof}

Using Claim \ref{claim:count-size2}, we have 
\begin{align*}
    \mathbb{E}_{b,b'\in B}[|A+\{0,b,b'\}|] 
    &=|A|+\mathbb{E}_{b,b'\in B}[|(b+A)\setminus A|+|(b'+A)\setminus(A\cup (A+b))]\\
    &\ge n + 2\frac{|A|}{n}\left(n-1-2\tilde{n}-2n/s\right) >2n,
\end{align*}
assuming that $s$ is sufficiently large and $c_2$ is sufficiently small.%

\end{proof}

\section{Higher-dimensional versions}\label{sec:high-dim}
In this section, we consider bounds that can be shown in high dimension. In particular, we will show a negative answer to \cite[Question 3]{BLT3}, in that we cannot expect a much better bound than the Cauchy--Davenport bound in the medium-size regime even in high dimension. 
\begin{prop}\label{prop:neg-blt}
For all positive integers $d$, $t$, and $n$, there exists a subset $A$ of size at least $n$ in $\mathbb{Z}^d$ such that $A$ cannot be covered by a union of $t$ parallel hyperplanes, and for any $s\leq n$ and any subset $A'\subseteq A$ of size $s$, $|A'+A'| \le 2|A|+O_{d,t}(s)$. 
\end{prop}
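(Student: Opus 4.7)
The plan is to take $A$ to be a long arithmetic progression on one coordinate axis, padded with a few widely separated points on each of the other axes so that no direction collapses $A$ into $t$ parallel hyperplanes. Concretely, let $N=\max(n,t+1)$, set $P=\{ie_1:1\le i\le N\}$, and for $j=2,\dots,d$ pick a rapidly increasing sequence of scales $M_2\ll M_3\ll\cdots\ll M_d$ (say $M_j\ge 2N+2(t+1)M_{j-1}$), and let $S_j=\{kM_je_j:1\le k\le t+1\}$. Finally take $A=P\cup S_2\cup\cdots\cup S_d$, which has size at least $N\ge n$.

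The non-coverability of $A$ by $t$ parallel hyperplanes reduces to showing that for every nonzero $v\in\mathbb{Z}^d$ the image $v\cdot A$ has more than $t$ elements. I would split into two cases. If $v_1\ne 0$, then $v\cdot P$ already contains $N\ge t+1$ distinct values. If $v_1=0$, there is some $j\ge 2$ with $v_j\ne 0$, and then $v\cdot S_j=\{kM_jv_j:1\le k\le t+1\}$ has $t+1$ distinct values. Either way, $A$ cannot be covered by $t$ parallel hyperplanes.

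The heart of the argument is the sumset bound. Given $A'\subseteq A$ of size $s\le n$, decompose $A'=P'\sqcup S_2'\sqcup\cdots\sqcup S_d'$ with $P'=A'\cap P$ and $S_j'=A'\cap S_j$, noting $|S_j'|\le t+1$. Then
\[
A'+A'=(P'+P')\cup\bigcup_{j=2}^d(P'+S_j')\cup\bigcup_{2\le j\le j'\le d}(S_j'+S_{j'}').
\]
The key structural point is that, provided the scales $M_j$ are chosen spread out enough, these pieces live in pairwise disjoint regions of $\mathbb{Z}^d$: each piece is distinguished either by its set of nonzero coordinates or by disjoint coordinate windows determined by the $M_j$. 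Hence the sizes simply add. I then estimate each summand: $|P'+P'|\le 2|P'|-1\le 2|A|-1$, each $|P'+S_j'|\le |P'|(t+1)\le (t+1)s$, and each $|S_j'+S_{j'}'|\le (t+1)^2$. Summing yields $|A'+A'|\le 2|A|+O_{d,t}(s)$, as desired.

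The only step requiring genuine care is the disjointness of the sumset pieces, but this is a routine coordinate-bookkeeping verification once the $M_j$ are taken to grow fast enough compared to $N$ and each other; no real obstacle arises.
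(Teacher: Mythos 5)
Your proof is correct and follows the same basic shape as the paper's: an arithmetic progression $P$ supplies the bulk of $A$, and a small set of $O_{d,t}(1)$ additional points guarantees non-coverability by $t$ parallel hyperplanes, after which the sumset bound falls out from $|A'+A'|\le |2P| + (\text{bounded set})\cdot|A'|$. The one difference worth noting is how the small spreading set is obtained: the paper invokes its Lemma~\ref{lem:const-fill-dim} (applied to $\mathbb{Z}^d$) to produce, abstractly, a set $A_{(C)}$ of size $O_{d,t}(1)$ that is not covered by $t$ parallel hyperplanes, whereas you construct this set explicitly as $\bigcup_{j\ge 2} S_j$ with $S_j=\{kM_je_j:1\le k\le t+1\}$ and verify non-coverability directly by the two cases $v_1\ne 0$ and $v_1=0$. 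Your route is more self-contained for this isolated proposition; the paper's reuses a lemma it needs anyway for Theorem~\ref{thm:high-dim}. One small remark: you frame the final step around disjointness of the sumset pieces, but that is unnecessary extra work -- the cardinality bound follows immediately from subadditivity of the union, exactly as you compute, so there is nothing left to verify about the $M_j$'s (and in fact the $M_j$ could all equal $1$, since the $S_j$ already serve their purpose via the hyperplane-normal case analysis).
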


Nevertheless, Theorem \ref{thm:high-dim}, restated below, shows that if, instead of the medium-sized version, one asks for a subset $A'$ of $A$ of bounded size with $|A'+A| \ge (2^d-\epsilon)|A|$, then such $A'$ always exists. 
\highdim*

Let $P(A;t,k,d)$ denote the property that $A$ is not contained in the union of any $t$ parallel translates of a subspace of dimension $k<d$ in $\mathbb{R}^d$. The proof of both results above will make use of the following lemma.
\begin{lem}\label{lem:const-fill-dim}
For any integers $t\geq 1$ and $k\in [0,d-1]$, there exist constants $C=C(t,k,d)$ and $T=T(t,k,d)$ such that any set $A\subseteq \mathbb{R}^d$ satisfying $P(A;T,k,d)$ has a subset $A_{(C)}$ of size at most $C$ satisfying $P(A_{(C)};t,k,d)$.
\end{lem}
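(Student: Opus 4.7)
The plan is to induct on $k$, reformulating the property via quotient projections. For a $k$-dimensional subspace $V \subseteq \mathbb{R}^d$, let $\pi_V : \mathbb{R}^d \to \mathbb{R}^d/V$ be the canonical projection; then $P(S; s, k, d)$ is equivalent to the condition that $|\pi_V(S)| > s$ for every $k$-dimensional subspace $V$. The base case $k = 0$ is immediate: $V$ must be $\{0\}$, so the condition reduces to $|S| > s$, and one may take $T = t$ and $C = t+1$.

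For the inductive step, assume the result for $k - 1$ in any ambient dimension $d' \geq k$. Given $A \subseteq \mathbb{R}^d$ with $P(A; T, k, d)$, pick any $t+1$ distinct points $p_0, \dots, p_t \in A$ (which exist since $|A| \geq |\pi_V(A)| > T \geq t$) and set $S_0 := \{p_0, \dots, p_t\}$. The key observation is that a $k$-subspace $V$ fails to satisfy $|\pi_V(S_0)| > t$ if and only if $\pi_V(p_i) = \pi_V(p_j)$ for some $i \neq j$, equivalently $V$ contains one of the finitely many $1$-dimensional subspaces $W_{ij} := \mathrm{span}(p_i - p_j)$. This reduces the infinite family of potentially bad $V$'s to finitely many classes, one per pair.

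To handle each class, I would project onto $\mathbb{R}^d / W_{ij} \cong \mathbb{R}^{d-1}$. The hypothesis $|\pi_V(A)| > T$ for all $k$-dim $V \supseteq W_{ij}$ translates, via the correspondence $V \leftrightarrow V/W_{ij}$, into $|\pi_{V'}(\pi_{W_{ij}}(A))| > T$ for all $(k-1)$-dimensional subspaces $V' \subseteq \mathbb{R}^{d-1}$. Thus $\pi_{W_{ij}}(A)$ satisfies $P(\cdot\,; T, k-1, d-1)$, so by the inductive hypothesis (choosing $T \geq T(t, k-1, d-1)$) there is a subset of $\pi_{W_{ij}}(A)$ of size at most $C(t, k-1, d-1)$ with $P(\cdot\,; t, k-1, d-1)$. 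Lifting each of its points to a preimage in $A$ yields $A_{ij} \subseteq A$ with $|A_{ij}| \leq C(t, k-1, d-1)$ and $|\pi_V(A_{ij})| > t$ for every $k$-dim $V \supseteq W_{ij}$.

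Finally, set $A_{(C)} := S_0 \cup \bigcup_{i<j} A_{ij}$, whose size is at most $(t+1) + \binom{t+1}{2} C(t, k-1, d-1)$, giving the recursion defining $C(t, k, d)$. For any $k$-dim $V$: if $V$ contains no $W_{ij}$, then $|\pi_V(A_{(C)})| \geq |\pi_V(S_0)| = t+1 > t$; otherwise $V \supseteq W_{ij}$ for some pair, giving $|\pi_V(A_{(C)})| \geq |\pi_V(A_{ij})| > t$. The main subtlety is that the set of $k$-subspaces failing property $P$ for a fixed finite sample is a priori an infinite subvariety of the Grassmannian; the crux is noticing that it decomposes into finitely many ``containing a given line'' pieces, each of which can be handled by descending to $\mathbb{R}^{d-1}$ and invoking the inductive hypothesis for $k-1$.
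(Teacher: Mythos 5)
Your proof is correct, but takes a genuinely different route from the paper's. Both induct on $k$, but the paper reduces $k \to k-1$ while keeping the ambient dimension $d$ fixed and inflating the parameter $t$ to $t^2$: its key observation is that if a set is covered by $t$ translates of each of two distinct $k$-subspaces $H_1 \neq H_2$, it is covered by $t^2$ translates of $H_1 \cap H_2$ (dimension $\leq k-1$), so after applying the inductive hypothesis with $t^2$ there is at most one offending $k$-subspace $H$, remedied by adding $t+1$ points in distinct translates of $H$. You instead reduce both $k \to k-1$ and $d \to d-1$ while keeping $t$ fixed: after choosing $t+1$ points, the only $k$-subspaces not yet handled are those containing one of the $\binom{t+1}{2}$ lines $W_{ij}$, and for each such line you quotient by $W_{ij}$ and apply the hypothesis in $\mathbb{R}^{d-1}$. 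Your decomposition is a nice alternative to the paper's ``at most one bad $H$'' argument, and quantitatively it is better: your recursion yields $T(t,k,d)=t$ and $C(t,k,d) = t^{O(k)}$, whereas the paper's squaring gives $T(t,k,d)=t^{2^k}$ and $C(t,k,d) \approx t^{2^k}$. For the application in the paper ($t$ taken large with $d$ fixed) the quantitative difference is irrelevant, but your version is cleaner to state. One small presentational point: you should say explicitly that the set of $k$-subspaces $V$ with $|\pi_V(S_0)|\le t$ is handled because at least one pair collapses under $\pi_V$ (several pairs may collapse simultaneously, but one suffices), and you should record the recursion $T(t,k,d)=\max(t, T(t,k-1,d-1))$ that your argument needs, noting it unwinds to $T=t$.
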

\begin{proof}
We prove the result by induction on $k$. When $k=0$, the result is immediate (with $T(t,k,d)=t$ and $C(t,k,d)=t+1$). Now assume the claim holds for all $k<k_0$ and consider the case $k=k_0\in [1,d-1]$. Since $A$ satisfies $P(A;T,k,d)$, it satisfies $P(A;T',k-1,d)$ for any $T'\le T$, so by choosing $T(t,k,d)$ appropriately we can assume that $A$ satisfies $P(A;T(t^2,k-1,d),k-1,d)$. By the inductive hypothesis, we can select a subset $X$ of $A$ of size at most $C(t^2,k-1,d)$ such that $X$ satisfies $P(X;t^2,k-1,d)$. %

Suppose there are distinct subspaces $H_1,H_2$ of dimension $k$ such that $X$ is contained in both a union of $t$ translates of $H_1$ and a union of $t$ translates of $H_2$. In this case, $X$ is contained in a union of at most $t^2$ translates of $H_1\cap H_2$, which has dimension at most $k-1$, violating $P(X;t^2,k-1,d)$. Hence, there is at most one subspace $H$ of dimension $k$ such that $X$ is contained in a union of $t$ translates of $H$. If no such $H$ exists, then $X$ satisfies $P(X;t,k,d)$. Otherwise, since $A$ satisfies $P(A;T,k,d)$ with $T\geq t$, we can find a set of $t+1$ points in $A$, each in a different translate of $H$. Taking the union of $X$ with this set of $t+1$ points, we obtain a set $\bar{X}$ satisfying $P(\bar{X};t,k,d)$ with $|\bar{X}|\le C(t^2,k-1,d)+t+1$. Thus the claim holds for $k=k_0$ as well with $C(t,k,d)=C(t^2,k-1,d)+t+1$ and $T(t,k,d)=T(t^2,k-1,d)$, and hence by induction for all $k\leq d-1$. \end{proof}
Using Lemma \ref{lem:const-fill-dim}, the proofs of Proposition \ref{prop:neg-blt} and that of Theorem \ref{thm:high-dim} directly follow. 

\begin{proof}[Proof of Proposition \ref{prop:neg-blt}]
By Lemma \ref{lem:const-fill-dim} with $A = \mathbb{Z}^d$, there exists a finite subset $A_{(C)} \subset \mathbb{Z}^d$ of size $O_{t,d}(1)$ that cannot be covered by a union of $t$ parallel translates of a hyperplane. Let $A$ be the union of $A_{(C)}$ and an arithmetic progression $P$ of length $n-|A_{(C)}|$. For any subset $A' \subseteq A$ of size $s$, $A'+A'$ is contained in $(2P) \cup (A_{(C)}+A')$, so $|A'+A'|\le 2|P|+|A_{(C)}||A'|\leq 2|A|+O_{d,t}(s)$, as claimed.
\end{proof}

\begin{proof}[Proof of Theorem \ref{thm:high-dim}]

Let $\delta$ be sufficiently small in $\epsilon$. 
By Theorem \ref{thm:BLT-almost-all}, there exists $C'$ depending on $d$ and $\delta$ such that the following holds. We can find a subset $\hat{A} \subseteq A$ of size at least $(1-\delta)|A|$ such that upon choosing a random subset $\tilde{A} \subset \hat{A}$ of size $C'$, one has $\mathbb{E}[|\tilde{A}+\hat{A}|] \ge \min(2^{2d}|\hat{A}|,(1-\delta)|\hat{A}+\hat{A}|)$%
. In particular, there exists a subset $\tilde{A}$ of $\hat{A}$ of size $C'$ for which $|\tilde{A}+\hat{A}|\ge \min(2^{2d}|\hat{A}|,(1-\delta)|\hat{A}+\hat{A}|)$. If $(1-\delta)|\hat{A}+\hat{A}| \ge 2^{2d}|\hat{A}|\ge 2^{2d}(1-\delta)|A|$, then we are done; hence, we can assume that $|\tilde{A}+\hat{A}|\ge (1-\delta)|\hat{A}+\hat{A}|$. 

Let $m$ be the smallest integer for which $|\hat{A}+\hat{A}|<(2^m-\delta)|\hat{A}|$, and suppose $m\leq d$. Then by the Freiman--Bilu Theorem (\cite{Bilu, GT}), $\hat{A}$ is contained in a union of at most $K(d)$ translates of a GAP in $\mathbb{Z}^d$ of dimension at most $m-1$ and thus in a union of at most $K(d)$ translates of a subspace $Q$ of dimension at most $m-1\leq d-1$. 

Choose $t$ sufficiently large in $K(d)$. By Lemma \ref{lem:const-fill-dim}, for some $T=T(t,d)$, assuming that $A$ satisfies $P(A;T,d-1,d)$, we can select a subset $A_{(C)}$ of $A$ of size at most $C=C(t,d)$ which satisfies $P(A_{(C)};t,d-1,d)$. 
Let $A'=\tilde{A}\cup A_{(C)}$. We have $A'+A\supseteq(\tilde{A}+\hat{A})\cup (A_{(C)}+\hat{A})$. Note that $\tilde{A}+\hat{A}-\hat{A}$ is contained in a set $E_1$ which is the union of at most $K(d)^3$ translates of $Q$. Furthermore, if two translates $a+\hat{A}$ and $a'+\hat{A}$ intersect, then $a'-a \in \hat{A}-\hat{A}$, which is contained in a set $E_2$ which is the union of at most $K(d)^2$ translates of $Q$. We claim that $A_{(C)}$ contains a subset $S$ such that $S\cap E_1=(S-S)\cap E_2 =\{0\}$ and $|S|\ge (t-K(d)^3)/(2K(d)^2)$. Indeed, for
a maximal subset $S$ of $A_{(C)}$ which is disjoint from $E_1$ and with $S-S$ disjoint from $E_2\setminus \{0\}$, we have that $A_{(C)}\subseteq E_1 \cup (S+E_2)\cup (S-E_2)$, which can be covered by $2sK(d)^2+K(d)^3$ translates of $Q$. Since $A_{(C)}$ satisfies $P(A_{(C)};t,d-1,d)$, we must have $s\ge (t-K(d)^3)/(2K(d)^2)$. As such, %
\[
|(\hat{A}+A_{(C)})\setminus(\hat{A}+\tilde{A})|\ge (t-K(d)^3)|\hat{A}|/(2K(d)^2) \ge (t-K(d)^3)(1-\delta)|A|/(2K(d)^2).
\]
Hence for $t$ sufficiently large, we obtain that $|A'+A| > 2^d|A|$, as desired. 

It remains to consider the case when $m>d$, i.e. $|\hat{A}+\hat{A}|\ge (2^d-\delta)|\hat{A}|$. In this case we have
\[
|A'+A| \ge |\tilde{A}+\hat{A}|\ge (1-\delta)(2^d-\delta)(1-\delta)|A| \ge (2^d-\epsilon)|A|.\qedhere
\]
\end{proof}

\section{Saturating the entire sumset from a subset}\label{sec:all}
In this section, we study the following question. Given a finite set $A$ contained in an abelian group with $|A+A| = O(|A|)$, how large do we need $A'$ to be so that $A+A' = A+A$? We also consider the asymmetric version: Given finite sets $A,B$ contained in an abelian group with $|A|=|B|$ and $|A+B| = O(|A|)$, how large do we need $A'\subseteq A,B'\subseteq B$ to be so that $(B+A')\cup(A+B') = A+B$? We begin first with general upper bounds that hold for arbitrary groups. 

\subsection{Upper bound}
The following theorem will be our main technical result. 
\begin{thm}\label{thm:allsum}
Let $G$ be a group, let $S,T,X$ be subsets of $G$, and let $Z=XST$. Then there exist subsets $S'$ of $S$ and $T'$ of $T$ with $|S'|,|T'|=o(|Z|^2/|X|)$, and $(S'T)\cup (ST')=ST$. 
\end{thm}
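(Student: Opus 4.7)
My plan is to prove Theorem \ref{thm:allsum} by applying the triangle removal lemma to an auxiliary tripartite graph, building on the Kr\'al'-Serra-Vena reduction of arithmetic removal to graph removal. I construct the tripartite graph $\Gamma$ with vertex parts $V_1 = X$, $V_2 = XS$, $V_3 = Z = XST$, and edges
\[
E_{12} = \{(x, xs) : x \in X,\, s \in S\}, \;\; E_{23} = \{(y, yt) : y \in XS,\, t \in T\}, \;\; E_{13} = \{(x, xu) : x \in X,\, u \in ST\}.
\]
For each $s \in S$, the edges of $E_{12}$ labeled by $s$ form a matching $M_s$ of size $|X|$, and analogously $E_{23}$ decomposes into matchings indexed by $T$ and $E_{13}$ into matchings indexed by $ST$. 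A direct check shows the triangles of $\Gamma$ are in bijection with triples $(x,s,t) \in X \times S \times T$, giving $|X|\,|S|\,|T|$ triangles on at most $n \le 3|Z|$ vertices.

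The key translation into graph language is the following: finding $S' \subseteq S$ and $T' \subseteq T$ with $(S'T) \cup (ST') = ST$ is equivalent to finding $S', T'$ such that deleting the matchings $M_s$ for $s \in S'$ and $M_t$ for $t \in T'$ destroys, for every label $u \in ST$, at least one of the $|R(u)|\,|X|$ triangles whose $E_{13}$-side lies in $M_u$, where $R(u) := \{(s,t) \in S \times T : st = u\}$. Indeed, if some $u$ had every associated triangle surviving, then every representation $(s,t) \in R(u)$ would satisfy $s \notin S'$ and $t \notin T'$, so $u$ would fail to lie in $S'T \cup ST'$.

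The main step is to apply triangle removal together with a thresholding argument: if an application of removal deletes $\delta |Z|^2$ edges of $E_{12}$, then the number of labels $s$ whose matching $M_s$ loses a constant fraction of its edges is at most $O(\delta |Z|^2/|X|)$, which is of the required form $o(|Z|^2/|X|)$; I take these "heavily affected" labels to be $S'$, and analogously for $T'$. The main obstacle I anticipate is that the triangle count $|X|\,|S|\,|T|$ may be as large as $\Theta(n^3)$ --- precisely in the regime where $|X|, |S|, |T|$ are all of order $|Z|$, which is the setting relevant to Corollary \ref{cor:allsum-S} --- so triangle removal does not apply directly to the full $\Gamma$. The plan to overcome this is to iterate: greedily reduce the uncovered set $W \subseteq ST$ in stages, and at each stage apply triangle removal to the subgraph $\Gamma_W$ of $\Gamma$ retaining only the $E_{13}$-edges whose labels lie in $W$, whose triangle count $|X|\sum_{u \in W}|R(u)|$ becomes genuinely $o(n^3)$ once $|W|$ has been driven sufficiently small by previous stages. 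Managing the quantitative interaction between the greedy and removal phases, together with carrying out the argument in a general (possibly nonabelian) group where the Cayley-type symmetries used by Kr\'al'-Serra-Vena are weaker and one must track orientations more carefully, constitutes the main technical challenge.
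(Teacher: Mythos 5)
Your graph construction and the arithmetic-to-graph translation are exactly right, and they match the paper's: the tripartite graph on $X$, $XS$, $XST$ with matchings indexed by $S$, $T$, $ST$, and the observation that destroying all triangles corresponds to hitting every representation $u = st$ through either $S'$ or $T'$. Your identification of the obstacle --- that the raw graph has up to $|X||S||T| = \Theta(n^3)$ triangles, so removal cannot be applied directly --- is also the crux.

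However, your proposed fix has a genuine gap: you say the triangle count $|X|\sum_{u\in W}|R(u)|$ becomes $o(n^3)$ \emph{once $|W|$ is small}, and you plan to iterate greedy shrinking of $W$ with removal. That reasoning doesn't go through cleanly, and neither the iteration nor the smallness of $|W|$ is the right invariant. The paper runs a single greedy pass \emph{on $S$} before any removal: fix a threshold $\tau$, and greedily add $s$ to $S_*$ whenever $|(sT)\setminus(S_*T)| > \tau$. This terminates with $|S_*| \le |ST|/\tau \le |Z|/\tau$, and --- this is the point --- the terminal condition is \emph{not} that $W := (ST)\setminus(S_*T)$ is small (it may well still have size $\Theta(|ST|)$), but that every remaining $s\in S\setminus S_*$ satisfies $|sT\cap W|\le\tau$. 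Since left-multiplication by $s$ is injective, this means that for each pair $(x,s)$ with $s\notin S_*$ there are at most $\tau$ values of $t$ giving a triangle $(x,xs,xst)$ with $st\in W$. Hence the triangle count is at most $\tau|X||S| \le \tau|Z|^2 \le (\tau/|Z|)|V(\Gamma)|^3$, which is $o(n^3)$ once $\tau = o(|Z|)$. One application of triangle removal then deletes $\epsilon(\tau/|Z|)\cdot(3|Z|)^2$ edges; thresholding each matching at $|X|/3$ removed edges gives $\hat S$, $\hat T$, $\hat U$ with $|\hat S|+|\hat T|+|\hat U| \le 27\epsilon(\tau/|Z|)|Z|^2/|X|$, and taking $T'=\hat T$ and $S'=S_*\cup\hat S\cup\{s_u : u\in\hat U\}$ (with $s_u t_u = u$ an arbitrary representation) yields $(S'T)\cup(ST')=ST$ with $|S'|+|T'| \le |Z|/\tau + 27\epsilon(\tau/|Z|)|Z|^2/|X|$. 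Choosing $\tau$ with $\tau=\omega(1)$ and $\tau=o(|Z|)$ then gives the $o(|Z|^2/|X|)$ bound. So: no iteration is needed, the greedy acts on $S$ rather than directly on $W$, and its purpose is to enforce a per-$s$ bound on uncovered sums rather than to shrink $|W|$. With that correction the rest of your plan (thresholding, collecting $S'$ and $T'$) completes as you outlined.
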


We will use the graph-theoretic triangle removal lemma in the proof of Theorem \ref{thm:allsum}. The idea of using the triangle removal lemma to prove an arithmetic removal lemma in general groups goes back to work of Kr\'al', Serra and Vena \cite{KSV}. Here we build on and generalize their proof idea to prove a result that is potentially meaningful for sparse subsets of a group. We have stated below the triangle removal lemma, which follows from Szemer\'edi's regularity lemma. The best quantitative bound known for the triangle removal lemma is by Fox \cite{F}, where $1/\epsilon(\delta)$ can be taken as the inverse function of an exponential tower of height $O(\log \delta^{-1})$. 
\begin{lem}[Triangle Removal Lemma \cite{F}]
There exists $\epsilon:\mathbb{R}_+\to \mathbb{R}_+$ such that $\epsilon(\delta)\to 0$ when $\delta\to 0$ and the following holds. If $\Gamma$ is a graph with at most $\delta |V(\Gamma)|^3$ triangles, then one can remove at most $\epsilon(\delta)|V(\Gamma)|^2$ edges from $\Gamma$ and obtain a triangle free graph. 
\end{lem}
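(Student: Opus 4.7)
I plan to prove Theorem~\ref{thm:allsum} by adapting the Kr\'al'--Serra--Vena reduction of arithmetic removal to the graph-theoretic triangle removal lemma. The construction is a tripartite graph $\Gamma$ whose vertex classes $V_1,V_2,V_3$ are disjoint copies of $X$, $XS$, and $Z=XST$, respectively. Since $|AB|\ge\max(|A|,|B|)$ for nonempty subsets $A,B$ of any group, each $|V_i|\le|Z|$ and $\Gamma$ has at most $3|Z|$ vertices. The edges are: $(x,y)\in V_1\times V_2$ iff $y=xs$ for some $s\in S$; $(y,w)\in V_2\times V_3$ iff $w=yt$ for some $t\in T$; and $(x,w)\in V_1\times V_3$ iff $x^{-1}w\in ST$. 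A direct check shows that the triangles of $\Gamma$ are in bijection with $X\times S\times T$ via $(x,s,t)\mapsto(x,xs,xst)$, each triangle using one edge of each of the three bipartite slices, carrying a natural label in $S$, $T$, and $ST$ respectively.

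\textbf{Using triangle removal.} Since $\Gamma$ may itself be triangle-dense, I will first reduce to a situation where triangle removal gives a meaningful conclusion by a greedy preprocessing: while some $s\in S$ satisfies $|sT\setminus(S'T\cup ST')|\ge\tau$, add $s$ to $S'$, and symmetrically for $t\in T$ and $St$, with the threshold $\tau$ chosen so that the greedy phase contributes $o(|Z|^2/|X|)$ elements. Afterwards, the subgraph of triangles whose corresponding product $st\in ST$ is still uncovered has density $o(1)$ in $\Gamma$, so the triangle removal lemma yields a set $E'$ of at most $o(|Z|^2)$ edges whose deletion destroys every residual triangle. From $E'$ I read off labels: for each deleted $V_1V_2$-edge add the $S$-label to $S'$; for each deleted $V_2V_3$-edge add the $T$-label to $T'$; and for each deleted $V_1V_3$-edge of label $z'\in ST$, pick once and for all a factorization $z'=st$ and add $s$ to $S'$. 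A case analysis over which of the three edges of the triangle $(x,xs,xst)$ got deleted shows that every $z'\in ST$ ends up in $S'T\cup ST'$.

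\textbf{Main obstacle: the $1/|X|$ saving.} The core difficulty is to bound $|S'|+|T'|$ by $o(|Z|^2/|X|)$ rather than the naive $o(|Z|^2)$ that the triangle removal lemma yields by itself. The source of the savings is that each label --- be it $s\in S$, $t\in T$, or $z'\in ST$ --- appears on exactly $|X|$ distinct edges in its slice, one for every $x\in X$. I plan to realize the savings by arguing that we may take $E'$ to be approximately invariant under the left-translation $X$-action on labels (equivalently, on edges within each slice), so that the number of distinct labels contained in $E'$ is at most $|E'|/|X|=o(|Z|^2/|X|)$. Concretely, this amounts to symmetrizing $E'$ by averaging a fractional version over the $|X|$ translates of each label, then rounding back to an integral deletion set; the triangles are preserved under this symmetrization because shifting all three edges of a triangle by a common $x_0\in X$ sends one triangle to another. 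This label-level symmetrization, which is the key extension of the argument in \cite{KSV}, will be the most delicate step of the proof, since the vertex classes $V_1,V_2,V_3$ are not themselves $X$-invariant and so the argument must be carried out at the level of edge labels rather than vertices.
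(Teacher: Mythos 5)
There is a fundamental mismatch: the statement you were asked to prove is the Triangle Removal Lemma itself, but your proposal is a proof sketch of Theorem~\ref{thm:allsum}, the arithmetic result that \emph{uses} the Triangle Removal Lemma as a black box. In the paper the lemma is not proved at all; it is quoted from Fox's work \cite{F} (and follows from Szemer\'edi's regularity lemma). Nothing in your write-up addresses how to remove $\epsilon(\delta)|V(\Gamma)|^2$ edges from a graph with few triangles --- on the contrary, you explicitly invoke that statement in your second step ("the triangle removal lemma yields a set $E'$ of at most $o(|Z|^2)$ edges whose deletion destroys every residual triangle"). So as a proof of the stated lemma, the proposal is circular: it assumes the very result it is supposed to establish. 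A genuine proof would require the regularity method (or Fox's tower-type argument), which is an entirely different and much heavier piece of machinery than anything sketched here.

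Setting aside the mismatch, if one reads your text as an attempt at Theorem~\ref{thm:allsum}, it follows the paper's Kr\'al'--Serra--Vena-style construction closely, but the step you identify as "the most delicate" --- symmetrizing the deleted edge set $E'$ under an $X$-action to gain the factor $1/|X|$ --- is left unexecuted, and it is also not how the paper proceeds. The paper avoids symmetrization entirely with a pigeonhole argument: for each surviving pair $(s,t)$ with $st\notin S_*T$ there are exactly $|X|$ triangles $(x,xs,xst)$, each destroyed by deleting one of its three edges, so at least $|X|/3$ deletions land in one slice and all carry the same label ($s$, $t$, or $st$); collecting the labels hit at least $|X|/3$ times gives $|\hat S|+|\hat T|+|\hat U|\le 27\epsilon(\tau/|Z|)|Z|^2/|X|$ directly. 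Your worry that the vertex classes are not $X$-invariant is well founded and is precisely why the averaging route is awkward; the thresholding argument sidesteps it.
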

\begin{proof}[Proof of Theorem~\ref{thm:allsum}]
Let $\tau>0$ be a threshold to be chosen later. We consider a greedy procedure to define a subset $S_*$ of $S$. Start with $S_*=\emptyset$. Whenever there exists $s\in S\setminus S_*$ such that $|(sT)\setminus(S_*T)| > \tau$, we update $S_*$ to be $S_*\cup \{s\}$. The procedure terminates with a set $S_*$ satisfying the condition that for all $s\in S\setminus S_*$, $|(sT)\setminus(S_*T)| \le \tau$. Note that 
\[
|S_*|\le |ST|/\tau \le |XST|/\tau.
\]

Let $Y=XS$ and recall that $Z=XST$. Consider a tripartite graph $\Gamma$ with vertex sets $X,Y,Z$ where $x\in X$, $y\in Y$ are connected if $y=xs$ for $s\in S\setminus S_*$; $y\in Y$, $z\in Z$ are connected if $z=yt$ for $t\in T$; and $z\in Z$, $x\in X$ are connected if $z = xu$ for $u\in (ST)\setminus (S_*T)$. Each triangle in $\Gamma$ corresponds to a triple $(x,xs,xst)$ where $s\in S\setminus S_*$, $t\in T$ and $st\in (ST)\setminus (S_*T)$. For each $x\in X$ and $s\in S\setminus S_*$, the number of $t$ forming such a triple is at most $\tau$ by our choice of $S_*$. Hence, $\Gamma$ has at most $\tau|X||S|\le (\tau/|Z|)|V(\Gamma)|^3$ triangles. By the triangle removal lemma, one can remove $\epsilon(\tau/|Z|)|V(\Gamma)|^2$ edges from $\Gamma$ and ensure that no triangle remains. Let $\hat{S}$ be the set of elements $s\in S\setminus S_*$ for which at least $|X|/3$ edges $(x,xs)$ of $\Gamma$ are removed. Let $\hat{T}$ be the set of elements $t\in T$ for which at least $|X|/3$ edges $(y,yt)$ of $\Gamma$ are removed. Let $\hat{U}$ be the set of elements $u\in ST\setminus (S_*T)$ for which at least $|X|/3$ edges $(xu,x)$ are removed. For each $s \in S\setminus S_*$ and $t\in T$ with $st\in (ST)\setminus(S_*T)$, there are $|X|$ triangles $(x,xs,xst)$ in $\Gamma$. Hence, in order for all such triangles to be removed, we must have that either $s\in \hat{S}$, $t\in \hat{T}$, or $st\in \hat{U}$. For each $u\in \hat{U}$, we choose an arbitrary representation $u=s_ut_u$ for $s_u\in S,t_u\in T$. Then we have $ST=(S'T)\cup(ST')$ for $T'=\hat{T}$ and $S'=S_*\cup \hat{S}\cup \{s_u:u\in \hat{U}\}$. We also have the bound 
\[
|\hat{S}|+|\hat{T}|+|\hat{U}| \le \epsilon(\tau /|Z|) (3|Z|)^2 / (|X|/3).
\]
Thus, 
\[
|S'|+|T'|\le |Z|/\tau + 27\epsilon(\tau/|Z|)|Z|^2/|X|. 
\]
By choosing $\tau$ such that $\tau=o(|Z|)$ and $\tau = \omega(1)$, we obtain the desired bound $|S'|+|T'|=o(|Z|^2/|X|)$, noting that $|Z|\ge |X|$. 
\end{proof}

Applying Theorem \ref{thm:allsum} with $X=Y=Z=G$, we obtain the following corollary. 
\begin{cor}\label{cor:allsum-group}
Let $G$ be a group, and let $S,T\subseteq G$. Then there exist $S'\subseteq S$ and $T'\subseteq T$ with $ST=(S'T)\cup (ST')$ and $|S'|+|T'|=o(|G|)$. 
\end{cor}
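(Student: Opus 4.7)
The plan is to invoke Theorem~\ref{thm:allsum} directly with the choice $X = G$, so that no further work beyond routine bookkeeping is needed. The corollary is trivial when either $S$ or $T$ is empty (take $S' = T' = \emptyset$), so I may assume both are nonempty.

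With $X = G$, the auxiliary set $Z = XST = G \cdot ST$ appearing in Theorem~\ref{thm:allsum} is all of $G$: fixing any $s_0 \in S$ and $t_0 \in T$, every $g \in G$ can be written as $g = (g t_0^{-1} s_0^{-1}) s_0 t_0 \in GST$. Consequently $|Z|^2/|X| = |G|^2/|G| = |G|$, and Theorem~\ref{thm:allsum} supplies subsets $S' \subseteq S$ and $T' \subseteq T$ with $ST = (S'T) \cup (ST')$ and $|S'|, |T'| = o(|G|)$, hence $|S'| + |T'| = o(|G|)$, which is exactly the claim.

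Since the corollary is a pure specialization, there is no genuine obstacle: all of the substantive content---the greedy construction of a set $S_*$ with $|S_*| \le |ST|/\tau$ satisfying $|(sT) \setminus (S_*T)| \le \tau$ for every $s \in S \setminus S_*$, the encoding of the remaining task as a triangle-removal instance on the tripartite graph with parts $X$, $XS$, $XST$, and the choice of $\tau$ with $\tau = \omega(1)$ and $\tau = o(|Z|)$ balancing the two error terms $|Z|/\tau$ and $\epsilon(\tau/|Z|)|Z|^2/|X|$---has already been carried out inside the proof of Theorem~\ref{thm:allsum}. The implicit limit in $o(\cdot)$ is $|G| \to \infty$, so the corollary carries content along any sequence of groups whose order tends to infinity.
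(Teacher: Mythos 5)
Your proof is correct and matches the paper's approach exactly: the paper also obtains the corollary by specializing Theorem~\ref{thm:allsum} to $X=G$ (so $Z=GST=G$ for nonempty $S,T$), giving the bound $o(|Z|^2/|X|)=o(|G|)$. The only addition you make is handling the trivial empty case and noting that the $o(\cdot)$ is with respect to $|G|\to\infty$, both of which are fine.
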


Combining with the Pl\"unnecke-Ruzsa Inequality (Lemma \ref{lem:PR}), we can prove Corollary~\ref{cor:allsum-S}, restated here, which is a local version of Corollary \ref{cor:allsum-group} in abelian groups.
\allsumS*
\begin{proof}
Let $X=S$, $Y=2S$ and $Z=2S+T$. By Lemma \ref{lem:PR}, $|X|,|Y|,|Z|=\Theta(|S|)$. The result then follows from Theorem \ref{thm:allsum}. 
\end{proof}

Corollary \ref{cor:allsum-S-nonab}, the nonabelian analog of Corollary \ref{cor:allsum-S}, also immediately follows from Theorem \ref{thm:allsum} by taking $X=S$.
\allsumSnonab*

Due to the use of the triangle removal lemma, Theorem~\ref{thm:allsum} does not come with good quantitative bounds. While we do not have strong quantitative bounds for the arithmetic removal lemma (the arithmetic analog of the triangle removal lemma) for general groups, we do have them over $\mathbb{F}_p^n$ (or more generally, groups with bounded exponent). In particular, Fox and Lov\'asz \cite{FL} showed an arithmetic removal lemma with polynomial dependency over $\mathbb{F}_p^n$, stated below. Let $c_p$ be a positive constant given by $p^{1-c_p} = \inf_{0<x<1}x^{-(p-1)/3}(1+x+\dots+x^{p-1})$, and let $C_p = 1+1/c_p$.
\begin{thm}[\cite{FL}]\label{thm:FL}
Let $X,Y,Z\subseteq \mathbb{F}_p^n$ be such that there are at most $\delta p^{2n}$ solutions to $x+y=z$ for $x\in X,y\in Y,z\in Z$. Then we can remove $\epsilon p^n$ elements from $X,Y,Z$ and remove all solutions to $x+y=z$, where $\epsilon=\delta^{1/C_p-o(1)}$.  
\end{thm}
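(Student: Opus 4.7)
The plan is to combine Tao's slice-rank reformulation of the Croot--Lev--Pach polynomial method (from which the Ellenberg--Gijswijt cap-set bound follows) with an elementary greedy cover argument, then balance over the ambient size $N = p^n$. First, I would consider the tripartite $3$-uniform hypergraph $H$ with parts $X,Y,Z$ whose hyperedges are the triples $(x,y,z) \in X\times Y \times Z$ with $x+y=z$. The indicator tensor of the relation $x+y=z$ on $(\mathbb{F}_p^n)^3$ has slice rank at most $3J_p^n = 3 N^{1-c_p}$ by the Ellenberg--Gijswijt bound (after the trivial symmetrization $z\mapsto -z$, turning $x+y=z$ into the standard cap-set relation $x+y+w=0$). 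Since slice rank upper-bounds the size of any diagonal subtensor---equivalently, the size of any matching in $H$---the matching number satisfies $\nu(H) \le 3N^{1-c_p}$.

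From this I would extract two upper bounds on the minimum vertex cover $\tau(H)$: any maximal matching uses $3\nu(H)$ vertices and so gives $\tau(H) \le 3\nu(H) \le 9N^{1-c_p}$; on the other hand, picking one vertex per edge trivially gives $\tau(H) \le E \le \delta N^2$. Writing $\epsilon := \tau(H)/N$, these combine to
\[
\epsilon \le \min\bigl\{\,9N^{-c_p},\ \delta N\,\bigr\}.
\]
For fixed $\delta$ this bound, viewed as a function of $N$, is maximized where the two terms coincide, namely at $N=(9/\delta)^{1/(1+c_p)}$, with common value $9^{1/(1+c_p)}\, \delta^{c_p/(1+c_p)}$. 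Since $c_p/(1+c_p) = 1/C_p$, one obtains $\epsilon \le 9^{1/(1+c_p)}\delta^{1/C_p}$, uniformly in $N$. The $o(1)$ in the theorem statement simply absorbs the constant prefactor and any sub-polynomial slack (for instance the factor $3$ in the greedy step, improvable to $2$ via Aharoni's bound on tripartite $3$-uniform hypergraphs).

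The hard part is the slice-rank input in the first step: this is the deep ingredient and the whole argument relies on having the sharp exponent $c_p$ in hand. Once it is granted, the remainder is short and elementary; the key conceptual content lies in the balancing in the second step, which interpolates between the ``absolute'' cap-set bound (sharp when $N$ is large) and the ``trivial'' $\tau\le E$ bound (sharp when $N$ is small), producing the optimal exponent $1/C_p = c_p/(c_p+1)$ rather than just $c_p$ or $1$. A secondary, easily-handled point is checking that the slice-rank bound carries over to the asymmetric setting $X,Y,Z$, which follows from monotonicity of slice rank under restriction to subsets of the coordinates.
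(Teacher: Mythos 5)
The paper does not prove Theorem~\ref{thm:FL}; it cites it directly from Fox--Lov\'asz~\cite{FL}, so I am evaluating your proof on its own.

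The fatal gap is the step where you claim that slice rank bounds the matching number $\nu(H)$: you write ``slice rank upper-bounds the size of any diagonal subtensor---equivalently, the size of any matching in $H$.'' These are not equivalent. Slice rank bounds the size of a \emph{diagonal}, i.e.\ a collection $\{(x_i,y_i,z_i)\}_{i=1}^m$ with $x_i+y_j=z_k$ if and \emph{only if} $i=j=k$ --- precisely the tricolored sum-free condition of Theorem~\ref{thm:KSS}. A matching in the tripartite $3$-uniform hypergraph $H$ only asks for the ``if'' direction together with distinctness of the $x_i$'s, the $y_i$'s, and the $z_i$'s; nothing forbids off-diagonal incidences $x_i+y_j=z_k$ with $(i,j,k)$ not all equal. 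This difference is not cosmetic: take $X=Y=Z=\mathbb{F}_p^n$ (with $p>2$) and the family $\{(x,x,2x):x\in\mathbb{F}_p^n\}$. This is a perfect matching of size $N=p^n$, while the slice rank of $\mathbb{I}[x+y=z]$ is $O(N^{1-c_p})=o(N)$. So the claimed bound $\nu(H)\le 3N^{1-c_p}$ is simply false, and with it the vertex cover bound and everything downstream.

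What is missing is the cleaning step, which is the actual content of the Fox--Lov\'asz argument. Starting from a large matching $M$, one uses the hypothesis (at most $\delta N^2$ total solutions, hence at most $\delta N^2$ off-diagonal incidences among the triples of $M$, since the $x_i,y_j,z_k$ are distinct within each part) to pass, by random or greedy sparsification, to a sub-matching $M'$ with no off-diagonal incidences at all. The slice-rank/KSS bound then applies to $|M'|$, and the loss from $|M|$ to $|M'|$ is controlled in terms of $\delta$; it is this trade-off that produces the exponent $1/C_p=c_p/(1+c_p)$. By contrast, the ``balancing over $N$'' you do in the last step does not supply the exponent for the right reason --- $N=p^n$ is fixed by the ambient space, not a free parameter --- though as a worst-case bound it is logically harmless. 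A secondary point: even for genuine diagonals, the factor $3$ in $3N^{1-c_p}$ comes from the crude slice-rank lemma; the paper cites the sharper constant-free form (Theorem~\ref{thm:KSS}) due to Kleinberg--Sawin--Speyer, Pebody, and Norin.
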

In fact, the dependency of $\delta$ on $\epsilon$ in Theorem \ref{thm:FL} is tight up to the $o(1)$ term in the exponent. 

Note that in the proof of Theorem \ref{thm:allsum} in the case $G=X=Y=Z$, denoting by $\epsilon_G(\delta)$ the bound we have for the arithmetic removal lemma over $G$, we obtain a bound
\[
|S'|+|T'|\le \inf_{\tau \ge 1} \left(|G|/\tau + 27\epsilon(\tau/|G|)|G|\right).
\]
If $\epsilon(\delta)$ depends polynomially in $\delta$, so that $\epsilon(\delta) \le \delta^c$ for some $c>0$, then we can optimize $\tau$ to obtain $|S'|+|T'| \le |G|^{1-c'}$ for some $c'>0$ depending on $c$. In particular, in the case $G=\mathbb{F}_p^n$, we can use the arithmetic removal lemma of Fox and Lov\'asz to obtain
\[
|S'|+|T'|\le |G|^{1-1/(1+C_p)+o(1)} = |G|^{1-1/(2+1/c_p)+o(1)}. 
\]
A similar result also holds more generally over abelian groups with bounded exponent. 

For $S,T\subseteq G$ of equal size with $|S+T|\le \lambda |S|$, by a result of Green and Ruzsa \cite{GR}, we can guarantee that $S$ and $T$ are subsets of a coset progression of size $O_{\lambda}(|S|)$. When $G=\mathbb{F}_p^{n}$, one can then guarantee that $S,T$ are subsets of subgroups of $G$ of size $O_{\lambda,p}(|S|)$. Hence, we also obtain the same result where we can guarantee $|S'|+|T'|\le O_{\lambda, p}(|S|^{1-1/(2+1/c_p)+o(1)})$. Furthermore, a power-saving bound holds in the same setting over abelian groups with bounded exponent. 

In \cite{Ell}, by adapting directly the linear-algebraic method behind the cap-set result, Ellenberg obtained a power-saving bound with tight exponent for the special case $G=\mathbb{F}_p^n$. Combining this result with the above argument, one obtains the following corollary.
\begin{cor}
Let $G=\mathbb{F}_p^n$. For $S,T\subseteq G$ of equal size with $|S+T| = O(|S|)$, there exist $S'\subseteq S$ and $T'\subseteq T$ with $S+T=(S'+T)\cup(S+T')$ and $|S'|+|T'| = O(|S|^{1-c_p})$. 
\end{cor}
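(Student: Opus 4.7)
The plan is to combine the two ingredients that the preceding discussion has already isolated: the Freiman-type reduction to a subgroup of comparable size, and Ellenberg's tight-exponent sumset-saturation bound for $\mathbb{F}_p^n$. Once the reduction is in place, Ellenberg's theorem plugs in as a black box and immediately yields the exponent $1-c_p$. This is exactly the informal sketch given in the paragraph preceding the corollary, where ``the above argument'' refers to the Green--Ruzsa embedding and ``this result'' refers to Ellenberg's linear-algebraic bound.

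The first step is the embedding. Given $S,T\subseteq \mathbb{F}_p^n$ of equal size with $|S+T|\le \lambda|S|$, repeated application of the Pl\"unnecke--Ruzsa inequality (Lemma \ref{lem:PR}) bounds every iterated sumset $kS-\ell S$, $kT-\ell T$, and $kS-\ell T$ by $O_{k,\ell,\lambda}(|S|)$. In $\mathbb{F}_p^n$ with $p$ fixed, this level of control on iterated sumsets forces $S$ and $T$, after suitable translation, to lie inside a common subgroup $H\le\mathbb{F}_p^n$ of size $|H|=O_{\lambda,p}(|S|)$; this is the quantitative Green--Ruzsa-type bound specialized to bounded exponent already cited in the excerpt. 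Translating does not affect the question of which subsets witness $(S'+T)\cup(S+T')=S+T$, so we may regard $S$ and $T$ as subsets of $H\cong\mathbb{F}_p^{n'}$ with $p^{n'}=|H|$.

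The second step is to apply Ellenberg's result from \cite{Ell} to the pair $(S,T)$ inside $\mathbb{F}_p^{n'}$. That result, obtained by directly adapting the polynomial method behind the cap-set bound, produces $S'\subseteq S$ and $T'\subseteq T$ with $(S'+T)\cup(S+T')=S+T$ and $|S'|+|T'|=O(|H|^{1-c_p})$, the exponent being tight. Chaining with $|H|=O_{\lambda,p}(|S|)$ yields $|S'|+|T'|=O(|S|^{1-c_p})$, which is the statement of the corollary.

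There is no serious obstacle beyond invoking the two ingredients. The step that might look delicate -- ensuring that the subgroup produced in the embedding is only a constant factor larger than $|S|$ -- is routine in the bounded-exponent setting with $p$ fixed, since all iterated sumsets are controlled by Pl\"unnecke--Ruzsa. The genuine improvement over what the $\tau$-optimization of the previous subsection gives (namely, exponent $1-1/(2+1/c_p)$ from the Fox--Lov\'asz removal lemma) lives entirely inside Ellenberg's polynomial-method bound; the role of the present argument is merely to reduce to the setting in which it can be applied.
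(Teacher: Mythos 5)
Your proof is correct and takes essentially the same route as the paper: reduce via Pl\"unnecke--Ruzsa and the Green--Ruzsa structure theorem to the case where $S$ and $T$ live in a subgroup $H\le\mathbb{F}_p^n$ with $|H|=O_{\lambda,p}(|S|)$, then invoke Ellenberg's cap-set-method bound inside $H$ and chain the estimates. The paper leaves this as a one-line ``combining this result with the above argument,'' and your write-up supplies exactly the intended details, including the (correct) observation that translating $S$ and $T$ by a common vector is harmless for the saturation condition.
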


\subsection{Lower bound construction}
In this section, we study constructions of sets $S,T$ for which $|S+T|\ll |S|=|T|$, and any choice of $S'\subseteq S$, $T'\subseteq T$ such that $S+T=(S'+T)\cup(S+T')$ must necessarily have $|S'|+|T'|$ large. In fact, we will only need to focus on the symmetric case where $S=T=A$. 

First, we will show that one cannot expect a power-saving upper bound for $|S'|+|T'|$ over $\mathbb{Z}$. We will make use of a construction of Behrend for progression-free sets \cite{Beh, Elkin, GW}.
\begin{prop}\label{prop:Behrend}
For any positive integer $t$, the set $X_{r, n}(t) := \{(x_1, \dots, x_n):1\leq x_i\leq r\;\forall\;1\leq i\leq n, \sum_{i=1}^dx_i^2 = t\}$ is progression-free in $\mathbb{Z}_m^d$ when $m \geq 2r$.
\end{prop}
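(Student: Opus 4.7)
The plan is to carry out the standard Behrend argument: lift any putative three-term arithmetic progression in $X_{r,n}(t)\subseteq \mathbb{Z}_m^d$ back to $\mathbb{Z}^d$, and then use strict convexity of the Euclidean sphere to force the progression to be trivial. (I will treat the $n$ versus $d$ in the statement as the same parameter, which is clearly the intended meaning.)

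First, I would suppose $a,b,c \in X_{r,n}(t)$ satisfy $a+c \equiv 2b \pmod m$ coordinatewise, i.e.\ form a three-term AP in $\mathbb{Z}_m^d$. For each coordinate $i$, the integer $a_i+c_i-2b_i$ lies in the interval $[-(2r-2),\,2r-2]$, since each of $a_i,b_i,c_i$ is in $\{1,\dots,r\}$. Since $m \geq 2r > 2r-2$, the only multiple of $m$ in this range is $0$, hence $a_i+c_i = 2b_i$ as integers. This lifts the modular AP to a genuine AP in $\mathbb{Z}^d$.

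Second, I would apply the sphere constraint $\|a\|^2=\|b\|^2=\|c\|^2=t$ together with $a+c=2b$. Expanding,
\[
\|a-b\|^2 + \|c-b\|^2 = \|a\|^2+\|c\|^2+2\|b\|^2 - 2\langle a+c,\,b\rangle = 4t - 2\langle 2b,b\rangle = 0,
\]
which forces $a=b=c$. Thus the only three-term APs in $X_{r,n}(t)$ inside $\mathbb{Z}_m^d$ are the trivial ones, establishing the proposition.

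I do not anticipate any real obstacle: the only substantive step is the lifting in the first paragraph, and the hypothesis $m \geq 2r$ is tailored precisely to make that step work. The convexity step is a short computation using the parallelogram identity.
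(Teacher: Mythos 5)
Your proof is correct and follows essentially the same two-step strategy as the paper's: first lift the mod-$m$ three-term progression to an integer progression using the bound $m \geq 2r$, then use the sphere constraint to show the progression is trivial. The only cosmetic difference is that you phrase the final step via the parallelogram identity $\|a-b\|^2 + \|c-b\|^2 = 0$, whereas the paper expands coordinatewise to get $\sum_i (x_i - z_i)^2 = 0$; these are the same computation.
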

\begin{proof}
Suppose that three distinct elements $x, y, z\in X_{r, n}(t)$ form a 3-term progression in $\mathbb{Z}_m^n$.

For each $i$, we know that $2y_i \equiv x_i+z_i\pmod m$. Moreover, because $1\leq x_i, y_i, z_i\leq r$, we know that $2\leq 2y_i\leq 2r\leq m$ and $2\leq x_i+z_i\leq 2r\leq m$. Therefore, $2y_i = x_i+z_i$, and hence we have
\[
    0 = 2t+2t-4t = \sum_{i=1}^n2x_i^2 + 2z_i^2 - 4y_i^2  = \sum_{i=1}^n 2x_i^2 + 2z_i^2 - (x_i+z_i)^2  = \sum_{i=1}^n(x_i-z_i)^2.
\]
This is only possible when $x = z$. Since $2y = x+z$, we have $y = (x+z)/2 = x$, which contradicts the assumption of distinctness. Therefore we conclude that $X_{r, n}(t)$ is progression-free in $\mathbb{Z}_m^n$.
\end{proof}

\begin{prop}[Construction in $\mathbb{Z}$]
For any sufficiently large positive integer $N$, there exists a finite set $A
\subset \mathbb{Z}$ such that $|A| \leq N$, $|A+A| \leq 6|A|$, and for any subset $A'\subseteq A$ with $A'+A = A+A$, we have $|A'| \geq Ne^{-c\sqrt{\log N}}$ for some absolute constant $c > 0$.
\end{prop}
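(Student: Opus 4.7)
The plan is to construct $A$ as a disjoint union of a short 3-AP-free ``essential core'' $B$ and a long interval translated far away, so that the doubling is dominated by the interval while the AP-free structure forces many elements of $A$ into any saturating set $A'$.

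Set $M = \lfloor N/2 \rfloor$. Using Proposition~\ref{prop:Behrend} followed by a standard base-$m$ encoding (choosing $m$ slightly larger than $2r$ so that coordinatewise sums of two elements of $X_{r,n}(t)$ stay below $m$, preventing carries and transferring the progression-free property from $\mathbb{Z}_m^n$ to $\mathbb{Z}$), obtain a 3-AP-free set $B \subseteq \{0, 1, \ldots, M-1\}$ with $|B| \geq M \exp(-c_0 \sqrt{\log M})$ for an absolute constant $c_0$. Let $C = \{0, 1, \ldots, M-1\}$ and $g = 3M$, and define $A = B \cup (C + g)$; since the two pieces are disjoint, $|A| = |B| + M \leq 2M \leq N$.

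For the doubling bound, decompose
\[
A + A = (B + B) \cup (B + C + g) \cup (C + C + 2g),
\]
whose three parts lie in the disjoint windows $[0, 2M-2]$, $[3M, 5M-2]$, and $[6M, 8M-2]$, respectively; each has size at most $2M - 1$, so $|A + A| \leq 6M - 3 < 6|A|$.

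For the saturation lower bound, suppose $A' \subseteq A$ satisfies $A' + A = A + A$, and fix $b \in B$. The element $2b$ lies in $[0, 2M-2]$, so any representation $2b = a' + a$ with $a', a \in A$ must have $a', a \in A \cap [0, M-1] = B$, and the 3-AP-free property of $B$ then forces $a' = a = b$. Hence $b \in A'$ for every $b \in B$, so $B \subseteq A'$ and $|A'| \geq |B| \geq N \exp(-c \sqrt{\log N})$ for an appropriate constant $c > 0$. The main place where care is needed is the derivation of the 1D AP-free set from Proposition~\ref{prop:Behrend}; the rest of the argument reduces to interval disjointness.
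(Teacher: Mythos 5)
Your proposal is correct and takes essentially the same approach as the paper: both constructions unite a Behrend-type $3$-AP-free subset (obtained by encoding $X_{r,n}(t)$ into $\mathbb{Z}$ via a carry-free positional map) with a translated interval of length $\Theta(N)$, positioned so that the three windows of $A+A$ are disjoint and each has size $O(N)$, while the doubles $2b$ for $b$ in the AP-free part have unique representations and so force that part into any saturating $A'$. The only cosmetic differences are that you place the interval to the right of the AP-free set and take the base $m$ slightly larger than $2r$, whereas the paper puts the interval on the negative side and works with base exactly $2r$.
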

\begin{proof}
Let $r, n$ be determined later. 
We would like to use Proposition~\ref{prop:Behrend}. By the Pigeonhole Principle, since $\sum_{i=1}^nx_i^2$ is in $[n, nr^2]$ when $(x_1, \dots, x_n)$ is taken from $\{1, \dots, r\}^n$, there exists $t$ such that $X_{r, n}(t)$ contains at least $\frac{r^n}{nr^2}$ elements. Fix such a $t$. Let $\phi:\mathbb{Z}_{2r}^n\to \{0,\dots,(2r)^n-1\}$ %
be a bijective map that induces a Freiman $2$-isomorphism from $\{1,\dots,r\}^n$ to its image. Since $X_{r,n}(t)$ is a progression-free set in $\mathbb{Z}^n_{2r}$, the set $A_0 := \phi(X_{r, n}(t))$ is a progression-free set in $\mathbb{Z}$. Thus, if $2a = a_1 + a_2$ for $a, a_1, a_2\in A_0$, then $a_1 = a_2 = a$.

For simplicity we denote $T = (2r)^n$. Let $A_1 = [-2T, -T-1]\cap \mathbb{Z}$ and let $A = A_0\cup A_1$. Clearly $T= |A_1| \leq |A| = |A_0| + |A_1| \leq 2T$. On the other hand, we have $A+A \subseteq [-4T, 2T)$, so $|A+A| \leq 6|A|$. Suppose $A'\subseteq A$ satisfies that $A' + A = A+A$. For any $a\in A_0$, we know that $2a\in A+A$. If $2a = a_1 + a_2$ for some $a_1, a_2\in A$, then since $a_1 + a_2 = 2a > 0$, neither $a_1$ nor $a_2$ is in $A_1$, implying that $a_1, a_2\in A_0$. From our construction of $A_0$, we must have $a_1 = a_2 = a$. Hence if $2a\in A'+A$ for some subset $A'\subseteq A$, then we must have $a\in A'$. Hence, $A+A' = A+A$ implies that $A_0\subseteq A'$. Now if we optimize by choosing $N = 2T$, $n = \sqrt{\log N}$ and $r = e^{\sqrt{\log N}}/4$, then $|A| \leq N$ and $|A'| \geq |A_0| \geq \frac{r^n}{nr^2} \geq Ne^{-c\sqrt{\log N}}$ for some positive constant $c$.
\end{proof}

As we saw in the previous subsection, the behavior over $\mathbb{F}_p^{n}$ is different, and we can show a power-saving upper bound on $|S'|+|T'|$. For the construction over $\mathbb{F}_p^n$, we will use the result of Kleinberg, Sawin and Speyer \cite{KSS} (combined with a conjecture later resolved by Pebody \cite{P} and Norin \cite{N}) on tricolored sum-free sets. 
\begin{thm}[\cite{KSS, N, P}]\label{thm:KSS}
Given a collection of ordered triples $\{(x_i, y_i, z_i)\}_{i=1}^{m}$ in $\mathbb{F}_p^n$ such that
$x_i + y_j + z_k = 0$ holds if and only if $i = j = k$, the size of the collection satisfies the
bound
$m \leq p^{(1-c_p)n}$.
Furthermore, there exists such a collection with $m \ge p^{(1-c_p)n-o(n)}$.
\end{thm}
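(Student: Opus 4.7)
My plan is to establish the upper bound via the slice rank polynomial method of Croot--Lev--Pach, Ellenberg--Gijswijt and Tao, adapted to the tripartite setting by Kleinberg, Sawin and Speyer, and to sketch a block construction for the matching lower bound.

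Using $\delta_{u=0} = 1-u^{p-1}$ on $\mathbb{F}_p$ (Fermat's little theorem), the indicator of $x+y+z=0$ on $(\mathbb{F}_p^n)^3$ is realized by the polynomial
\[
P(x,y,z) = \prod_{\ell=1}^n \bigl(1 - (x_\ell+y_\ell+z_\ell)^{p-1}\bigr)
\]
of total degree $n(p-1)$. Let $X=\{x_i\}$, $Y=\{y_i\}$, $Z=\{z_i\}$; by hypothesis the restriction of $P$ to $X \times Y \times Z$ is the identity tensor $(x_i,y_j,z_k) \mapsto \delta_{i=j=k}$, whose slice rank is exactly $m$ by a standard argument of Tao. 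On the other hand, expand $P$ as a sum of monomials $\prod_\ell x_\ell^{a_\ell} y_\ell^{b_\ell} z_\ell^{c_\ell}$ with $a_\ell,b_\ell,c_\ell \in \{0,\ldots,p-1\}$; since the total degree of every monomial is at most $n(p-1)$, each monomial has at least one of $\sum_\ell a_\ell, \sum_\ell b_\ell, \sum_\ell c_\ell$ bounded by $n(p-1)/3$. Grouping monomials by which variable carries this low total degree presents $P|_{X\times Y\times Z}$ as a sum of three tensors of slice rank at most
\[
N := \#\Bigl\{(a_1,\ldots,a_n) \in \{0,\ldots,p-1\}^n : \sum_\ell a_\ell \le n(p-1)/3\Bigr\},
\]
so $m \le 3N$. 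The Chernoff-type bound $N \le x^{-n(p-1)/3}(1+x+\cdots+x^{p-1})^n$ valid for every $x \in (0,1)$, combined with the definition of $c_p$ as the optimizer, gives $N \le p^{(1-c_p)n}$, hence $m \le 3p^{(1-c_p)n}$. To remove the factor of three and match the sharp bound $m \le p^{(1-c_p)n}$, I would invoke the Pebody--Norin refinement, which reorganizes the monomial decomposition and uses a more delicate rank-counting argument to eliminate the lossy threefold grouping above.

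For the matching lower bound, I would use a tensor/block construction. Fix $\epsilon>0$ and a large base dimension $n_0$, and probabilistically construct a tricolored sum-free set of size $M_0 \ge p^{(1-c_p-\epsilon)n_0}$ in $\mathbb{F}_p^{n_0}$ by sampling candidate triples from a polynomial-defined family calibrated to the optimizer of $c_p$, then deleting a small fraction to kill forbidden collisions. Tensoring $\lfloor n/n_0 \rfloor$ independent copies of this base configuration produces a tricolored sum-free set in $\mathbb{F}_p^n$ of size at least $p^{(1-c_p-\epsilon)n}$; letting $\epsilon \to 0$ sufficiently slowly with $n \to \infty$ yields the claimed $p^{(1-c_p)n-o(n)}$ lower bound. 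The main obstacle is the sharpness of this base construction: producing a family whose density matches the slice rank exponent precisely requires an extremal matching argument dual to the polynomial decomposition from the upper bound, and establishing this saturation through the combinatorics of monomial degrees modulo $p$ is the technical heart of the Kleinberg--Sawin--Speyer paper.
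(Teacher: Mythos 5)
This statement is cited by the paper as Theorem \ref{thm:KSS} and attributed to the references \cite{KSS, N, P}; the paper does not supply its own proof, so there is no internal argument to compare against. That said, a few comments on your reconstruction.

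Your upper-bound argument is the correct one in outline: the polynomial $\prod_\ell\bigl(1-(x_\ell+y_\ell+z_\ell)^{p-1}\bigr)$ restricted to $X\times Y\times Z$ is the identity tensor, its slice rank is $m$, and decomposing by which coordinate carries total degree at most $n(p-1)/3$ gives $m\le 3N$, with $N\le p^{(1-c_p)n}$ by the Chernoff optimization built into the definition of $c_p$. That is the standard Croot--Lev--Pach/Ellenberg--Gijswijt/Tao argument and it is exactly what underlies the bound in \cite{KSS}. However, your claim that the Pebody--Norin papers ``remove the factor of three'' is a misattribution. Pebody \cite{P} and Norin \cite{N} prove an entropy-maximization conjecture of Kleinberg--Sawin--Speyer that is needed for the \emph{lower} bound, not for sharpening the constant in the upper bound; the factor of $3$ in the slice-rank upper bound is simply absorbed by the $o(n)$ slack on the other side, and no reference is invoked to eliminate it.

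Your lower-bound sketch is where the real gap lies. The ``probabilistic construction plus tensoring'' description papers over the actual content of \cite{KSS}: the construction there builds a tri-colored sum-free set from a carefully chosen probability distribution $\mu$ on triples $(a,b,c)\in\{0,\dots,p-1\}^3$ with $a+b+c=p-1$ whose three marginals agree, and the size of the resulting set is governed by the entropy of that common marginal. The conjecture resolved by \cite{P,N} is precisely that a distribution exists whose marginal entropy attains $(1-c_p)\log p$, matching the exponent from the slice-rank side. Saying one ``samples from a polynomial-defined family calibrated to the optimizer'' does not capture this entropy-matching step, which is the crux. If you want to give a genuine proof sketch of the lower bound, you should state the Kleinberg--Sawin--Speyer entropy conjecture and explain how a maximum-entropy marginal translates into the exponent $(1-c_p)n-o(n)$; the tensoring step alone does not reduce the difficulty, since it only transfers a sharp base construction to higher $n$ and the sharp base construction is exactly what is at issue.
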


\begin{prop}[Construction in $\mathbb{F}_p^n$]
For positive integers $n$, there exists $A\subseteq \mathbb{F}_p^n$ such that $|A+A| \leq 6|A|$, and for any subset $A'\subseteq A$ with $A'+A = A+A$, we have $|A'|\geq p^{(1-c_p)n-o(n)}$.
\end{prop}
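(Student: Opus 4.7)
The plan is to mirror the $\mathbb{Z}$ construction, with the Behrend progression-free set replaced by a tricolored sum-free triple from Theorem~\ref{thm:KSS}. The key idea is that using two extra ``marker'' coordinates one can embed the sets $X$ and $Y$ from a tricolored sum-free triple $(X, Y, Z)$ into distinct cosets of a large ``bulk'' subgroup, so that $A + A$ is confined to a small number of cosets while certain ``diagonal'' elements are forced to have unique representations.

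Concretely, I would set $n_1 = n - 2$ and apply Theorem~\ref{thm:KSS} to obtain a tricolored sum-free set $\{(x_i, y_i, z_i)\}_{i=1}^m$ in $\mathbb{F}_p^{n_1}$ of size $m \ge p^{(1-c_p)n_1 - o(n_1)} = p^{(1-c_p)n - o(n)}$. Working in $\mathbb{F}_p^n = \mathbb{F}_p^{n_1} \oplus \mathbb{F}_p^2$, I would take the bulk to be the subgroup $H = \mathbb{F}_p^{n_1} \times \{(0, 0)\}$ and define structured pieces
\[
A_U = \{(x_i, 1, 0) : i \in [m]\}, \qquad A_V = \{(y_i, 0, 1) : i \in [m]\},
\]
and set $A = H \cup A_U \cup A_V$. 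To bound the doubling, I would observe that every element of $A + A$ lies in one of at most six cosets of $H$, indexed by the marker sums $(0, 0), (1, 0), (0, 1), (2, 0), (0, 2), (1, 1)$, each of which contains at most $p^{n_1}$ elements. Hence $|A + A| \le 6 p^{n_1} \le 6|A|$, since $|A| \ge |H| = p^{n_1}$. (For $p = 2$ the cosets $(2, 0)$ and $(0, 2)$ coincide with $(0, 0)$, giving an even better bound.)

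For the saturation lower bound, I would fix $i \in [m]$ and examine $w_i = (-z_i, 1, 1) \in A + A$ (which lies in the sumset because $w_i = U_i + V_i$ and $x_i + y_i = -z_i$). The marker $(1, 1)$ can be written as a sum of two markers from $\{(0, 0), (1, 0), (0, 1)\}$ only as $(1, 0) + (0, 1)$, so any representation $w_i = a + a'$ with $a, a' \in A$ must have one summand in $A_U$ and the other in $A_V$. Writing $a = (x_j, 1, 0)$ and $a' = (y_k, 0, 1)$ gives $x_j + y_k = -z_i$, which by the tricolored sum-free property of $(X, Y, Z)$ forces $j = k = i$. Thus $w_i$ has the unique representation $U_i + V_i$ in $A + A$, so the saturation condition $A' + A = A + A$ demands $A' \cap \{U_i, V_i\} \neq \emptyset$ for each $i$. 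Since the pairs $\{U_i, V_i\}$ are disjoint across $i$, we conclude $|A'| \ge m \ge p^{(1-c_p)n - o(n)}$.

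The main work is the coset bookkeeping used in both steps, and in particular ruling out any hidden representation of $w_i$ arising from bulk-plus-structured or structured-plus-structured sums that might land in the marker coset $(1, 1)$. This amounts to a direct inspection of the additive table of the marker group $\mathbb{F}_p^2$ and goes through uniformly for all $p \ge 2$, with only the coset collapse for $p = 2$ requiring a separate (and favorable) check.
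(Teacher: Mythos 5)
Your construction is correct and is essentially the same as the paper's: both proofs embed the tricolored sum-free triple from Theorem~\ref{thm:KSS} using two extra marker coordinates in $\mathbb{F}_p^2$, bound the doubling by observing that $A+A$ lives in at most six marker cosets, and extract uniqueness from the marker arithmetic combined with the sum-free property. The only difference is cosmetic --- you place the bulk $\mathbb{F}_p^{n-2}$ at marker $(0,0)$ and the $X,Y$ pieces at $(1,0),(0,1)$, forcing the unique representation at marker $(1,1)$, whereas the paper places the bulk at $(1,0)$ and $X,Y$ at $(0,0),(0,1)$, forcing it at marker $(0,1)$.
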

\begin{proof}
By Theorem \ref{thm:KSS}, there exist $m\geq p^{(1-c_p)n-o(n)}$ and $\{(x_i,y_i,z_i)\}_{i=1}^m$, where $x_i, y_i, z_i\in \mathbb{F}_p^{n-2}$ for each $1\leq i\leq m$, such that $x_i+y_j+z_k = 0$ if and only if $i = j = k$. By this condition, if $x_i = x_j$, then as $x_j+y_i+z_i = 0$, we have $i = j$. Thus the $x_i$'s are distinct, so $X = \{x_i:1\leq i\leq m\}$ is of size $m$. Similarly we have that $Y = \{y_i:1\leq i\leq m\}$ is of size $m$.

For simplicity, for $a, b\in \mathbb{F}_p$ and $x\in \mathbb{F}_p^{n-2}$, we write $(a, b, x) = (a, b, x_1, \dots, x_{n-2})$ to denote an element in $\mathbb{F}_p^n$. Then we shall take $A = (\{(0, 0)\}\times X)\cup (\{(0, 1)\}\times Y) \cup (\{(1, 0)\}\times \mathbb{F}_p^{n-2})$. Clearly $A\supset \{(1, 0)\}\times \mathbb{F}_p^{n-2}$ while $A+A\subseteq \{(0, 0), (0, 1), (0, 2), (1, 0), (1, 1), (2, 0)\}\times \mathbb{F}_p^{n-2}$, so $|A+A|\leq 6|A|$.

We now show that for $A'\subseteq A$, if $A'+A = A+A$, then $|A'| \geq m$. Note that $(0, 1, -z_i) = (0, 0, x_i) + (0, 1, y_i)\in A+A$. We show that $(0, 1, -z_i)$ can only be represented in this way as a sum of two elements in $A$. Suppose that $u+v = (0, 1, -z_i)$. If both of $u, v$ are in $\{(1, 0)\}\times \mathbb{F}_p^{n-2}$, then the second coordinate is $0$, which is different from $1$ in $(0, 1, -z_i)$. If exactly one of $u, v$ is in $\{(1, 0)\}\times \mathbb{F}_p^{n-2}$, then $u+v$ must have the first coordinate $1$, which is different from $0$ in $(0, 1, -z_i)$. Finally, the only possibility remaining is that $u, v$ are both from $(\{(0, 0)\}\times X)\cup (\{(0, 1)\}\times Y)$. By considering the second coordinate in $u+v = (0, 1, -z_i)$, the only possible combination is that one of $u, v$ is from $\{(0, 0)\}\times X$ and the other one from $\{(0, 1)\}\times Y$. Without loss of generality, say that $u = (0, 0, x_j)$ and $v = (0, 1, y_k)$. Then this implies that $x_j+y_k = -z_i$, which is only possible when $i=j=k$. Hence $(0, 1, -z_i) = (0, 0, x_i) + (0, 1, y_i)$ is the only representation (up to permutation of the summands). Therefore, for each $1\leq i\leq m$, either $(0, 0, x_i)$ or $(0, 1, y_i)$ is in $A'$ if $A'+A \ni (0, 1, -z_i)$. Hence $|A'|\geq m$ as desired.
\end{proof}

\section{Non-saturating sets}\label{sec:saturating-sumset}
In this section, we give the construction proving Theorem \ref{thm:saturating-sumset}, restated below for convenience.
\saturatingsumset*

The construction is first done over $\mathbb{F}_2^n$, and is then used to derive a construction over appropriate cyclic groups $\mathbb{Z}_N$, then over the integers $\mathbb{Z}$ and prime cyclic groups $\mathbb{Z}_p$.

\noindent
\textbf{Construction over $\mathbb{F}_2^n$.} %
Let $\theta$ be a sufficiently large constant, and $\theta/4<\delta<\theta/3$. Let $m$ be a sufficiently large positive integer, and let $p$ be a sufficiently large integer so that $\nu 2^p > 100$. 

For $x\in \mathbb{F}_2^m$, let $|x|_1$ denote the number of coordinates of $x$ that are equal to $1$. Consider $A_0\subseteq \mathbb{F}_2^m$ given by $A_0 = \{x:|x|_1 \le m/2-\theta \sqrt{m}\}$. By the Central Limit Theorem, we have $|A_0| \ge \exp(-C\theta^2)\cdot 2^m$, so $A_0$ is a dense %
subset of $\mathbb{F}_2^{m}$. Consider $B_0 = \{x:|x|_1 \le \theta \sqrt{m}\}$%
. For $x,y\in \mathbb{F}_2^m$, let $x\otimes y = (x_1y_1,\dots,x_my_m)$. For any $x$ with $|x|_1\le \theta \sqrt{m}$, note that $(A_0+x)\cap \{y:|y|_1\in [m/2-\delta \sqrt{m},m/2]\} \subseteq \{y:|y\otimes x|_1 \ge (\theta -\delta)\sqrt{m}\}$. 

\begin{claim}
There is an absolute constant $c>0$ such that the following holds. For any fixed $x$ with $|x|_1 \le \theta\sqrt{m}$, the number of $y\in \{y:|y|_1\in [m/2-\delta\sqrt{m},m/2]\}$ with $|y\otimes x|_1\ge (\theta -\delta)\sqrt{m}$ is at most $2^{-c\theta \sqrt{m}}\cdot 2^m$. %
\end{claim}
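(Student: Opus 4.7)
The plan is to reduce the count to a standard Chernoff-type tail estimate for a binomial random variable. Write $S=\operatorname{supp}(x)$ and $k=|S|=|x|_1$, so $k\le\theta\sqrt m$. For any $y\in\mathbb F_2^m$, the quantity $|y\otimes x|_1$ depends only on the restriction $y|_S$ and equals the number of $1$'s of $y$ on $S$. Dropping the constraint $|y|_1\in[m/2-\delta\sqrt m,m/2]$ only inflates the count, so it suffices to show that a uniformly random $y\in\mathbb F_2^m$ satisfies
$$
\Pr\bigl[|y\otimes x|_1\ge(\theta-\delta)\sqrt m\bigr]=\Pr\bigl[\mathrm{Bin}(k,1/2)\ge(\theta-\delta)\sqrt m\bigr]\le 2^{-c\theta\sqrt m}.
$$

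If $k<(\theta-\delta)\sqrt m$ the probability is zero; otherwise $k\in[(\theta-\delta)\sqrt m,\theta\sqrt m]$, and since $\delta<\theta/3<\theta/2$ the threshold $(\theta-\delta)\sqrt m$ strictly exceeds the mean $k/2\le\theta\sqrt m/2$, so we really are in an upper-tail regime. I would then apply Hoeffding's inequality with deviation $t=(\theta-\delta)\sqrt m-k/2$: using $k\le\theta\sqrt m$ gives $t\ge(\theta/2-\delta)\sqrt m$, and the hypothesis $\delta<\theta/3$ yields $t\ge(\theta/6)\sqrt m$. Hence
$$
\Pr\bigl[\mathrm{Bin}(k,1/2)\ge(\theta-\delta)\sqrt m\bigr]\le\exp(-2t^2/k)\le\exp\bigl(-\theta\sqrt m/18\bigr),
$$
which gives the required bound with $c=1/(18\ln 2)$.

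This is essentially a clean reduction to a textbook tail inequality, so I don't anticipate any real obstacle. The role of the hypothesis $\delta<\theta/3$ is precisely to guarantee that the target $(\theta-\delta)\sqrt m$ sits a positive absolute constant multiple of $\sqrt m$ above the mean $k/2$ of $\mathrm{Bin}(k,1/2)$, uniformly across the admissible range of $k$; with only $\delta<\theta/2$ the gap $\theta/2-\delta$ could be arbitrarily small and $c$ would fail to be absolute. One minor sanity check is that the bound does not degrade at the opposite endpoint $k=(\theta-\delta)\sqrt m$: there $t=(\theta-\delta)\sqrt m/2$ and $2t^2/k=(\theta-\delta)\sqrt m/2$, which is even larger than $\theta\sqrt m/18$, so the worst case is indeed $k=\theta\sqrt m$.
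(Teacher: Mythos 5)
Your proof is correct and takes essentially the same route as the paper's: drop the constraint on $|y|_1$, observe that for uniform $y$ the quantity $|y\otimes x|_1$ is $\mathrm{Bin}(|x|_1,1/2)$, and apply a Chernoff/Hoeffding upper-tail bound, using $\delta<\theta/3$ to guarantee the threshold sits an absolute constant multiple of $\sqrt m$ above the mean. You are in fact slightly more careful than the paper, which states the mean is $\theta\sqrt m/2$ (really it is $|x|_1/2\le\theta\sqrt m/2$) and does not spell out the dependence on $k=|x|_1$; your case split and the observation that the bound only improves for smaller $k$ close that minor gap cleanly.
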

\begin{proof}
Fix $x$ with $|x|_1 \le \theta\sqrt{m}$. Consider $y$ chosen uniformly at random from $\{0,1\}^{m}$. Then $|y\otimes x|_1$ is a binomial random variable with mean $\theta \sqrt{m}/2$. By the Chernoff bound, the probability that $|y\otimes x|_1 \ge (\theta-\delta)\sqrt{m}$ is at most $2^{-c\theta\sqrt{m}}$ for some absolute constant $c$ (assuming $\theta$ is sufficiently large and $\delta \in (\theta/4,\theta/3)$). Thus, the number of $y$ with $|y\otimes x|_1 \ge (\theta-\delta)\sqrt{m}$ is at most $2^{m-c\theta\sqrt{m}}$. 
\end{proof}

By the claim, for any subset $B'$ of $B_0$, %
\begin{align} \label{eq:s-small}
&|(A_0+B')\cap \{y:|y|_1 \in [m/2-\delta\sqrt{m},m/2]\}| \le |B'|2^{m-c\theta\sqrt{m}}.
\end{align}
Thus, noting that $B_0+B_0\subseteq \{y:|y|_1 \le m/2-\delta\sqrt{m}\}$, we have 
\begin{align} \label{eq:s-small-1}
&|(B_0+B_0)\cup (A_0+B')| \noindent\\
&\le |\{y:|y|_1 \le m/2-\delta\sqrt{m}\}| + |(A_0+B')\cap \{y:|y|_1 \in [m/2-\delta\sqrt{m},m/2]\}| \nonumber \\
&\le 2^m(|B'|2^{-c\theta\sqrt{m}} + \exp(-c_2\delta^2)),
\end{align}
for some constant $c_2$. 
For each $x\in \mathbb{F}_2^{p+m}$, write $x=(x_1,x_2)$ where $x_1 \in \mathbb{F}_2^{p}$ and $x_2 \in \mathbb{F}_2^{m}$. Consider $A\subseteq \mathbb{F}_2^{p+m}$ given by %
$$A((x_1,x_2))=\mathbb{I}(x_1=0)A_0(x_2) + (1-\mathbb{I}(x_1=0))B_0(x_2).$$ 
Here $\mathbb{I}$ denotes the indicator function%
, and for a set $S$, $S(x):=\mathbb{I}[x\in S]$. Then $A+A \subseteq \mathbb{F}_2^p\times \{x\in \mathbb{F}_2^m:|x|_1\le m/2\} \cup \{x:x_1=0\}$. %
The doubling constant of $A$ is $\lambda = |A+A|/|A| \le 2^{p+m}/(2^m \exp(-c\theta^2)) \le 2^{p}\exp(c\theta^2)$. 

For any subset $A'$ of $A$ of size at least $(1-\epsilon)|A|$, if $m$ is sufficiently large in terms of $\epsilon$, then $A'_0 := \{x_2: (0,x_2)\in A'\}$ has size at least $(1-2\epsilon)|A|$. Let $\Gamma_+=\Gamma_+^{1}(A)$ denote the set of points $y$ differing in at most one coordinate from some $x\in A$, and define $\Gamma_+^{k+1}(A)=\Gamma_+(\Gamma_+^{k}(A))$. Observe that $A'_0+B_0 = \Gamma_+^{\theta\sqrt{m}}(A'_0)$. Recall the following vertex isoperimetry inequality of Harper \cite{Har}.
\begin{thm}[\cite{Har}]\label{thm:iso}
Let $A\subseteq \{0,1\}^m$ have $|A| \ge \sum_{j=0}^{q}\binom{m}{j}$. Then $|\Gamma_+(A)| \ge \sum_{j=0}^{q+1}\binom{m}{j}$. 
\end{thm}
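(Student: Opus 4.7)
The construction of $A \subseteq \mathbb{F}_2^{p+m}$ is already in place. My plan is to complete the proof in two stages: first, verify the $\mathbb{F}_2^n$ case by combining the Chernoff estimate \eqref{eq:s-small-1} with Harper's vertex isoperimetric inequality (Theorem~\ref{thm:iso}); second, adapt the construction to cyclic groups $\mathbb{Z}_N$, then to $\mathbb{Z}$ and $\mathbb{Z}_p$. Throughout, I choose $\theta$ large in $\nu$, $\delta \in (\theta/4, \theta/3)$ with $\exp(-c_2\delta^2) \le \nu/8$, $p$ with $2/(2^p-1) \le \nu/3$, $\epsilon$ small in $\theta$ and $\nu$, and $m$ large in $s, \nu, \theta$.

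For the lower bound on $|A'+A|$: given $A' \subseteq A$ with $|A'| \ge (1-\epsilon)|A|$, the zero-slice $A'_0 := \{x_2 : (0,x_2) \in A'\}$ satisfies $|A'_0| \ge (1 - O(\epsilon))|A_0|$ for $m$ large (using $|A|/|A_0| = 1 + o(1)$ since $|B_0| \ll |A_0|$). Standard binomial-tail asymptotics then give $|A'_0| \ge \sum_{j \le q} \binom{m}{j}$ for some $q = m/2 - (\theta + O(\epsilon/\theta))\sqrt{m}$. Iterating Theorem~\ref{thm:iso} a total of $\theta\sqrt{m}$ times yields
\[
A'_0 + B_0 \;=\; \Gamma_+^{\theta\sqrt{m}}(A'_0) \;\supseteq\; \{y : |y|_1 \le q + \theta\sqrt{m}\},
\]
a Hamming ball of radius $m/2 - O(\epsilon\sqrt{m}/\theta)$ and size at least $(1/2 - O(\epsilon/\theta))\cdot 2^m$. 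Since $A'+A \supseteq \{x_1\} \times (A'_0+B_0)$ for each $x_1 \ne 0$, this gives $|A'+A| \ge (2^p-1)(1/2 - O(\epsilon/\theta))\cdot 2^m$.

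For the upper bound on $|A'+A_{(s)}|$, decompose $A_{(s)}$ along its first coordinate: set $A_{(s), x_1} := \{y : (x_1, y) \in A_{(s)}\}$, so $A_{(s), 0} \subseteq A_0$ and $A_{(s), x_1} \subseteq B_0$ for $x_1 \ne 0$, with $\sum_{x_1} |A_{(s), x_1}| \le s$. The main contribution in each $x_1 \ne 0$ layer is $\{x_1\} \times (A'_0 + A_{(s), x_1})$, bounded by \eqref{eq:s-small-1} as $|A_0 + A_{(s), x_1}| \le 2^m(|A_{(s), x_1}|\cdot 2^{-c\theta\sqrt{m}} + \exp(-c_2\delta^2))$. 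Other contributions are negligible: cross-sums with both first coordinates nonzero land in $B_0 + B_0 \subseteq A_0$ of size $\ll 2^m$, and sums $(\{x_1\} \times A'_{x_1}) + (\{0\} \times A_{(s), 0})$ have size at most $s|B_0| \ll 2^m$ per layer; the $x_1 = 0$ layer contributes at most $2^m$. Summing yields
\[
|A'+A_{(s)}| \le 2^m\bigl(s \cdot 2^{-c\theta\sqrt{m}} + (2^p-1)\exp(-c_2\delta^2)\bigr) + 2^m + o(2^{p+m}).
\]
Dividing by $|A'+A|$, each of the three main terms is at most $\nu/3$ by the chosen parameters, provided $s \le \nu(2^p-1) 2^{c\theta\sqrt{m}}/6$; since this bound on $s$ grows with $m$, it grows with $|A|$ as claimed.

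For the transfer to $\mathbb{Z}_N$, $\mathbb{Z}$, and $\mathbb{Z}_p$, I would rebuild the niveau set using base-$M$ digit expansions: take $N = M^{p+m}$ with $M \ge 3$ odd, and encode $(x_1, \ldots, x_{p+m}) \in \{0, 1\}^{p+m} \mapsto \sum_i x_i M^{i-1} \pmod N$. Redefining the analogs of $A_0, B_0, A$ in terms of $\{0,1\}$-valued digits with the same weight restrictions, no carries occur when adding two such elements in $\mathbb{Z}_N$ (since digit sums lie in $\{0, 1, 2\} \subsetneq \{0, \ldots, M-1\}$), so sums in $\mathbb{Z}_N$ are in Freiman $2$-isomorphic correspondence with coordinate-wise integer sums, and the Harper-type Hamming-ball and Chernoff estimates transport without essential change. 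For $\mathbb{Z}$, the same set works once $N$ is chosen large enough to avoid wraparound; for $\mathbb{Z}_p$ with a large prime $p$, Freiman rectification (Theorem~\ref{thm:rect}) embeds the bounded-doubling set into $\mathbb{Z}_p$ while preserving all relevant sumsets. The main obstacle lies in Stage~1: carefully juggling $\theta, \delta, \epsilon, p, s, m$ so that all three contributions to $|A'+A_{(s)}|$ are simultaneously bounded by $\nu|A'+A|$ requires precise control of the binomial-tail and Chernoff estimates, and extra care is needed because the Hamming-ball sizes in $A'_0$ are only slightly below the radius of $A_0$. A secondary subtlety is that no Freiman isomorphism exists from $(\mathbb{F}_2^n, \oplus)$ to $(\mathbb{Z}, +)$, so the transfer cannot be done by pushing forward the $\mathbb{F}_2^n$ construction directly; it must be reconstructed via a base-$M$ encoding in which the niveau structure and the relevant estimates hold natively.
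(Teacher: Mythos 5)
Your proposal does not prove the statement you were asked to prove. The statement is Theorem~\ref{thm:iso}, Harper's vertex isoperimetric inequality on the hypercube: any $A\subseteq\{0,1\}^m$ at least as large as a Hamming ball of radius $q$ has vertex neighborhood $\Gamma_+(A)$ at least as large as a Hamming ball of radius $q+1$. This is a classical result of Harper \cite{Har} which the paper quotes from the literature without proof. What you have written instead is an outline of the proof of Theorem~\ref{thm:saturating-sumset} (the non-saturating niveau-set construction of Section~\ref{sec:saturating-sumset}), which \emph{uses} Theorem~\ref{thm:iso} as a black box --- indeed you explicitly ``iterate Theorem~\ref{thm:iso} a total of $\theta\sqrt{m}$ times'' --- rather than establishing it.

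Concretely, nothing in your argument addresses why Hamming balls are extremal for the vertex boundary in $\{0,1\}^m$. A proof of Harper's theorem requires a genuinely different toolkit: typically one shows via compression operators (or via the stronger statement that initial segments of the simplicial order minimize $|\Gamma_+(A)|$ among sets of a given size) that any set can be replaced by a ``more ball-like'' set without increasing the size of its vertex boundary, and then verifies the bound for the compressed sets directly. None of the Chernoff estimates, layer decompositions, or Freiman-isomorphism transfers in your write-up bear on this. As it stands the proposal proves a different theorem of the paper and leaves the stated one entirely unaddressed.
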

Note that 
\[
|A_0'| \ge (1-2\epsilon)|A_0| = (1-2\epsilon)\sum_{j=0}^{m/2-\theta \sqrt{m}} \binom{m}{j} \ge \sum_{j=0}^{m/2-(\theta+\eta) \sqrt{m}} \binom{m}{j},
\]
where $\eta\to 0$ as $\epsilon\to 0$. By Theorem \ref{thm:iso}, 
\[
|\Gamma_+^{\theta\sqrt{m}}(A_0')| \ge \sum_{j=0}^{m/2-\eta\sqrt{m}}\binom{m}{j} \ge 2^{m-1}\exp(-C\eta^2).
\]
Since $A'+A$ contains $2^{p}$ disjoint translates of $A'_0+B_0 = \Gamma_+^{\theta \sqrt{m}}(A'_0)$, we have for sufficiently small $\epsilon$ that 
\[
|A'+A| \ge 2^{p+m-1}\exp(-C\eta^2)\ge 2^{p+m-2}. 
\]

On the other hand, for any such $A'$ and any subset $A_{(s)}$ of $A$ of size at most $s$, let $B_0' =\{x_2:\: (x_1,x_2)\in A_{(s)}, \, x_1\neq 0\}$. Then $A'+A_{(s)}$ is contained in $\{x:x_1=0\} \cup (\mathbb{F}_2^{p}\times ((B_0+B_0)\cup (A'_0+B_0')))$. By (\ref{eq:s-small-1}) and the fact that $|B'_0|\le |A_{(s)}|\le s$, we have that $|A'+A_{(s)}|$ is bounded above by %
\[
|A'+A_{(s)}| \le 2^m + 2^{p+m}(\exp(-c_2\delta^2)+s2^{-c\theta\sqrt{m}}).
\]
We thus have that 
\[
|A'+A_{(s)}|/|A'+A| < 4(2^{-p}+\exp(-c_2\delta^2)+s2^{-c\theta \sqrt{m}}).
\]
Recall that we chose $p$ such that $2^p \nu > 100$. By choosing $\theta$ (and hence $\delta$) sufficiently large such that $\exp(-c_2\delta^2)<\nu/25$, as long as $m$ is sufficiently large such that $s<\nu 2^{c\theta\sqrt{m}-5}$, we conclude that $|A'+A_{(s)}|/|A'+A|<\nu$ as desired. 

\noindent \textbf{Construction over $\mathbb{Z}_N$.} Let $q$ be a prime so that $q\nu > 100$, $\theta>0$ sufficiently large, and $\theta/4<\delta<\theta/3$. Consider primes $p_1,\dots,p_m>q$. Let $N=p_1\dots p_m$. For each $i\le m$, let $A_i = \{2k+1:k\le p_i/2\} \subseteq \mathbb{Z}_{p_i}$. For each $x\in \prod_{i}\mathbb{Z}_{p_i}$, let $\pi(x) \in \{0,1\}^m$ be defined by $\pi(x)_i = A_i(x_i)$. Define $A_0 = \{x:|\pi(x)|_1 \le m/2-\theta \sqrt{m}\}$, where again $|\pi(x)|_1$ is the number of ones in $\pi(x)$. Again $|A_0|=N\exp(-c\theta^2)$. Let $B_0 = \{x\in \{0,1\}^m:|\pi(x)|_1 \le \theta\sqrt{m}\}$. %
As before, for any subset $B_{(s)}$ of $B_0$ of size $s$, 
\[
|(A_0+B_{(s)})\cap \{y:|\pi(y)|_1 \in [m/2-\delta\sqrt{m},m/2]\}| \le N s2^{-c\theta\sqrt{m}}.
\]

Consider the group $G = \mathbb{Z}_q \times \prod_i \mathbb{Z}_{p_i}$, and denote each element $z\in G$ by $z=(x',x)$ with $x'\in \mathbb{Z}_q$ and $x \in \prod_i \mathbb{Z}_{p_i}$. Define $A\subseteq \mathbb{Z}_q\times \prod_i \mathbb{Z}_{p_i}$ by 
\[
A((x',x))=\mathbb{I}(x'=0)A_0(x)+(1-\mathbb{I}(x'=0))B_0(x).
\]
Then $A+A \subseteq \mathbb{Z}_q\times \{x:|\pi(x)|_1\le m/2\} \cup \{z:x'=0\}$.  %
For any subset $A'$ of $A$ of size at least $(1-\epsilon)|A|$, as in the proof of the construction over $\mathbb{F}_2^{N}$, we have $|A'+A| \ge Nq/4$. For any such $A'$ and any subset $A_{(s)}$ of $A$ of size $s$, we can bound 
\[
|A'+A_{(s)}| \le |A+A_{(s)}| \le qN(q^{-1}+\exp(-c\delta^2)+s2^{-c\theta\sqrt{m}}) < \nu qN/4 \leq \nu |A'+A|.
\]  

\noindent \textbf{Construction over $\mathbb{Z}$ and $\mathbb{Z}_p$.} Let $A$ be the set in the construction over $\mathbb{Z}_N$ and note that upon choosing $q$ depending only on $\nu$, we can guarantee that $|A|\gg_{\nu} N$. Let $\hat{A}$ be the set of representatives of $A$ in $[N-1]$. Observe that $\hat{A}+\hat{A}\subseteq [2(N-1)]$. We have $|\hat{A}+\hat{A}|\le 2|A+A|\le 2\lambda |A|$. For any $\hat{A}'\subseteq \hat{A}$ of size $(1-\epsilon)|\hat{A}|$ and any $\hat{A}_{(s)}$ of size $s$, we have that 
\[
\left|\hat{A}'+\hat{A}\right|\ge \left|\hat{A}'+\hat{A} \, (\bmod N)\right| \ge \frac{1}{\nu}\left|\hat{A}'+\hat{A}_{(s)} \, (\bmod N)\right| \ge \frac{1}{2\nu}\left|\hat{A}'+\hat{A}_{(s)}\right|.
\]
For any prime $p > 2N$ with $p\ll_{\nu} N$, the same set $\hat{A}\pmod p$ gives a construction of a subset of $\mathbb{Z}_p$ with density $\Omega_{\nu}(1)$ satisfying the required properties in Theorem \ref{thm:saturating-sumset}.

\section{Concluding remarks}

We believe that the following question is interesting. 
\begin{ques}
Let $\lambda, \epsilon>0$ be positive constants. Let $A$ be a subset of an abelian group $G$ with $|A+A|\le \lambda|A|$. How large does a subset $A'$ of $A$ need to be to guarantee $|A'+A|\ge (1-\epsilon)|A+A|$? 
\end{ques}
As we saw in Section \ref{sec:all}, even if $\epsilon=0$, we have that $|A'|=o(|A|)$ suffices (with explicit power-saving bounds over abelian groups with bounded exponent, while Behrend's construction shows that one cannot have power-saving bounds over $\mathbb{Z}$ or $\mathbb{Z}_p$). On the other hand, for some fixed positive $\epsilon$, considering $A$ formed by taking the union of a large arithmetic progression and $\sqrt{|A|}$ arbitrary elements shows that one needs $|A'| = \Omega(\sqrt{|A|})$. It would be interesting to determine for $\epsilon>0$ if one needs $|A'|\ge |A|^{1-o(1)}$ when $G=\mathbb{Z}$ or $G=\mathbb{Z}_p$. 

Another interesting question is to determine the optimal exponent on the doubling constant in Theorem \ref{thm:sym}. 
\begin{ques}
What is the largest $C$ %
such that the following holds? For some constant $c>0$, if $A$ is a nonempty subset of an abelian group $G$ and $\kappa=|A+A|/|A|$, then for each $s\le \kappa^{C-o(1)}$, there exists $A_{(s)}\subseteq A$ of size at most $s$ such that $|A+A_{(s)}|\ge cs|A|$.
\end{ques}
As discussed previously, from Theorem~\ref{thm:sym} and Proposition~\ref{prop:lower-exponent} we know that $C\in [\frac{1}{3},\frac{1}{1.29}]$, while for the setting of difference sets $A-A$ we have the better lower bound $C\geq \frac{1}{2}$.

\end{document}